\documentclass[12pt]{amsart}
\usepackage{amsfonts}
\usepackage{bbm}
\usepackage{mathrsfs}
\usepackage{txfonts}
\usepackage{amssymb}
\usepackage{graphicx, epsf,amssymb,amscd,amsfonts,times}
\usepackage{color,xspace,colortbl}
\usepackage{mathtools}
\usepackage{overpic}
\usepackage{latexsym}

\newtheorem{thm}{Theorem}[section]

\newtheorem{prop}[thm]{Proposition}
\newtheorem{defn}[thm]{Definition}
\newtheorem{lemma}[thm]{Lemma}
\newtheorem{cor}[thm]{Corollary}

\newtheorem{expl}[thm]{Example}

\theoremstyle{remark}
\newtheorem{rmk}[thm]{Remark}

\newenvironment{notation}
{\noindent\ignorespaces}
{\par\noindent\ignorespacesafterend}

\newcommand{\bdry}{\partial}

\newcommand{\be}{\begin{enumerate}}
\newcommand{\ee}{\end{enumerate}}

\topmargin.3in \textheight8.5in \textwidth6.5in \oddsidemargin0in
\evensidemargin0in

\begin{document}

\title[A proof of the classification theorem of overtwisted contact structures]{A proof of the classification theorem of overtwisted contact structures via convex surface theory}

\author{Yang Huang}
\address{University of Southern California, Los Angeles, CA 90089}
\email{huangyan@usc.edu}

\begin{abstract}
In~\cite{El}, Y. Eliashberg proved that two overtwisted contact structures on a closed oriented 3-manifold are isotopic through contact structures if and only if they are homotopic as 2-plane fields. We provide an alternative proof of this theorem using the convex surface theory and bypasses.
\end{abstract}

\maketitle

\tableofcontents


A contact manifold $(M,\xi)$ is a smooth manifold with a contact structure $\xi$, i.e., a maximally non-integrable codimension 1 tangent distribution. In particular, if the dimension of the manifold is three, it was realized through the work of D. Bennequin and Y. Eliashberg in~\cite{Be},~\cite{El1} that contact structures fall into two classes: {\em tight} or {\em overtwisted}. Since then, dynamical systems and foliation theory of surfaces embedded in contact 3-manifolds have been studied extensively to analyze this dichotomy. Based on these developments, Eliashberg gave a classification of overtwisted contact structures in~\cite{El}, which we now explain.

Let $M$ be a closed oriented manifold and $\triangle\subset M$ be an oriented embedded disk. Furthermore, we fix a point $p\in\triangle$. We denote by $Cont^{ot}(M,\triangle)$ the space of cooriented, positive, overtwisted contact structures on $M$ which are overtwisted along $\triangle$, i.e., the contact distribution is tangent to $\triangle$ along $\bdry\triangle$. Let $Distr(M,\triangle)$ be the space of cooriented 2-plane distributions on $M$ which are tangent to $\triangle$ at $p$. Both spaces are equipped with the $C^\infty$-topology.

\begin{thm}[Eliashberg]
Let $M$ be a closed, oriented 3-manifold. Then the inclusion $j:Cont^{ot}(M,\triangle) \to Distr(M,\triangle)$ is a homotopy equivalence.
\end{thm}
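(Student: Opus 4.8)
The plan is to prove that $j$ is a weak homotopy equivalence; since $Cont^{ot}(M,\triangle)$ and $Distr(M,\triangle)$ are spaces of (constrained) sections of a smooth bundle over the compact manifold $M$, they have the homotopy type of CW complexes, so by Whitehead's theorem a weak equivalence is automatically a homotopy equivalence. The content is therefore the following \emph{relative parametric $h$-principle}: for every $n\geq 0$ and every continuous family $\{\eta_z\}_{z\in D^n}$ in $Distr(M,\triangle)$ with $\eta_z\in Cont^{ot}(M,\triangle)$ for all $z\in\bdry D^n$, there is a homotopy, rel $\bdry D^n$ and through families in $Distr(M,\triangle)$, from $\{\eta_z\}$ to a family lying entirely in $Cont^{ot}(M,\triangle)$. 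This is exactly the vanishing of the relative homotopy sets $\pi_n\big(Distr(M,\triangle),\,Cont^{ot}(M,\triangle)\big)$ for all $n\geq 0$, which is equivalent to $j$ being bijective on $\pi_0$ and an isomorphism on all higher homotopy groups.

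The backbone is a \emph{convex surface dictionary} for overtwisted contact structures. Fix once and for all a neighbourhood $N$ of $\triangle$ carrying the model overtwisted germ dictated by the definition of $Cont^{ot}(M,\triangle)$, together with a closed surface $\Sigma\subset M\setminus N$ that cuts $M\setminus\mathrm{int}\,N$ into finitely many $3$-balls. To $\xi\in Cont^{ot}(M,\triangle)$ one associates, after a $C^{\infty}$-small isotopy making $\Sigma$ convex and after finitely many bypass attachments (each realised by a contact isotopy) arranging the dividing set to meet every $3$-ball boundary sphere in a single circle, the combinatorial pair $(\Sigma,\Gamma_\Sigma)$. By Giroux flexibility, $(\Sigma,\Gamma_\Sigma)$ together with the fixed germ on $N$ determines $\xi$ near $\Sigma\cup N$ up to isotopy; by Eliashberg's uniqueness of the tight $3$-ball each complementary ball is then filled canonically; so $\xi$ is determined up to isotopy by $(\Sigma,\Gamma_\Sigma)$. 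Conversely, any multicurve $\Gamma$ on $\Sigma$ that is \emph{admissible} --- bounding consistently on each ball and matching the germ on $\bdry N$ --- is realised by some such $\xi$, which is overtwisted because $N$ already contains an overtwisted disk. Finally the homotopy class of $\xi$ as a $2$-plane field is read off from $(\Sigma,\Gamma_\Sigma)$ through an Euler cocycle and, when it vanishes, a $d_3$-type invariant, and a library of local modifications of $\Gamma_\Sigma$ (finger moves, stabilisations, bypass attachments) realises any prescribed change of these invariants; this is what allows every homotopy class of $2$-plane fields to be hit.

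With the dictionary in place I would prove the $h$-principle as follows. First homotope the family $\{\eta_z\}$, rel $\bdry D^n$, so that for all $z$ it is contact and convex near a $1$-skeleton of $M$ and near $N$: there is no obstruction here, since germs of plane fields along a $1$-complex satisfy an $h$-principle and the germ on $N$ is already normalised. The remaining freedom lives over the complementary $3$-balls; since the family is already contact over $\bdry D^n$ and the overtwisted disk in $N$ is available to absorb defects, one reduces --- by the standard discretisation of a parametric $h$-principle over a fine triangulation of the parameter disk $D^n$, treating simplices of $D^n$ in order of increasing dimension --- to the non-parametric dictionary above, perturbed in parameters and relative to the data already fixed on lower simplices. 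The convex surface inputs needed are the parametric ones: a generic family of surfaces is $\eta_z$-convex except along codimension-one walls in parameter space, across which $\Gamma_\Sigma$ changes by a single bypass move; parametric Giroux flexibility; and a parametric version of the uniqueness of tight fillings of the ball. Combined with the dictionary, these produce the desired homotopy of $\{\eta_z\}$ into $Cont^{ot}(M,\triangle)$ rel $\bdry D^n$.

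The main obstacle is the step performed over a single simplex of the parameter triangulation, where one must connect the convex data prescribed on its boundary to the data forced in its interior. Geometrically, two admissible multicurves on $\Sigma$ carrying the same plane-field invariants must be joined by a finite sequence of elementary moves --- bypass attachments, their inverses, and moves leaving $\xi$ unchanged --- and, for overtwisted contact structures, each such move must be realised by a homotopy through overtwisted contact structures, which is possible precisely because the overtwisted disk in $N$ can be ``spent'' to create the bypass needed elsewhere and then recreated afterwards. This availability of arbitrary bypasses is the convex surface incarnation of Eliashberg's flexibility and is the technical heart of the argument; the combinatorial statement that equal invariants force connectivity by elementary moves is its natural companion. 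Once both are established, one checks that the normalisations along $\triangle$ and at $p$ survive the whole process, completing the proof of the $h$-principle and hence of the theorem.
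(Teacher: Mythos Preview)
First, note that the paper does \emph{not} prove this full homotopy-equivalence theorem: it is stated without proof as Eliashberg's result, and the paper's goal is explicitly the weaker Theorem~\ref{OT} (the $\pi_0$ statement). So there is no ``paper's own proof'' to compare against; the paper's machinery is built only to show that two homotopic overtwisted contact structures are isotopic, not that $j$ induces isomorphisms on all $\pi_n$.

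Turning to your proposal itself, there is a genuine gap at the heart of your ``dictionary''. You write that after arranging each complementary $3$-ball to have a single dividing circle on its boundary sphere, ``by Eliashberg's uniqueness of the tight $3$-ball each complementary ball is then filled canonically; so $\xi$ is determined up to isotopy by $(\Sigma,\Gamma_\Sigma)$.'' This is false: a single dividing circle on $\bdry B^3$ does not force $\xi|_{B^3}$ to be tight. In the overtwisted setting the ball can carry infinitely many pairwise non-isotopic contact structures with the same convex boundary, distinguished by a Hopf-type invariant. Indeed, the bulk of the paper (Sections~5--7, culminating in Proposition~\ref{Classifi}) is devoted precisely to classifying overtwisted contact structures on $S^2\times[0,1]$ with $\Gamma_{S^2\times\{0\}}=\Gamma_{S^2\times\{1\}}=S^1$, showing they are stably isotopic to some $\triangle^n$; and Section~8 then matches these powers using the divisibility of the Euler class. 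Your dictionary skips exactly this content, so the conclusion that $(\Sigma,\Gamma_\Sigma)$ determines $\xi$ up to isotopy is unjustified and in fact incorrect.

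A second, related issue: even for the $\pi_0$ case your outline does not supply the key mechanism --- the bypass triangle and its commutation and stable-isotopy properties --- that allows one to compare two overtwisted fillings of the same ball. For the full parametric statement you would in addition need parametric versions of Giroux flexibility, of the bypass-existence Lemma~\ref{Abund}, and of the classification in Proposition~\ref{Classifi}; none of these are established here, and the last is already delicate in the non-parametric case.
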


In particular, we have:

\begin{thm} \label{OT}
Let $M$ be a closed, oriented 3-manifold. If $\xi$ and $\xi'$ are two positive overtwisted contact structures on $M$, then they are isotopic if and only if they are homotopic as 2-plane fields.
\end{thm}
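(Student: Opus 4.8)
This statement is a formal consequence of the homotopy equivalence theorem above together with Gray's stability theorem; essentially all of the substance is carried by Eliashberg's theorem. The ``only if'' direction is trivial, since a contact isotopy is in particular a homotopy through cooriented $2$-plane fields. So the plan is to assume that $\xi$ and $\xi'$ are homotopic as $2$-plane fields and to produce a contact isotopy between them.

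First I would normalize both contact structures near a common overtwisted disk. Fix once and for all the disk $\triangle\subset M$ and the marked point $p\in\bdry\triangle$, realized as the image of a fixed embedding into $M$ of the standard overtwisted disk together with a small $3$-ball neighborhood. Since $\xi$ is overtwisted, $(M,\xi)$ contains an overtwisted disk, and (after a $C^\infty$-small perturbation making its characteristic foliation standard) the neighborhood theorem for overtwisted disks says the germ of $\xi$ along it is contactomorphic to the standard model; this produces an embedding of a $3$-ball neighborhood of the model disk into $(M,\xi)$. By the disk theorem, any two orientation-preserving embeddings of a $3$-ball into the connected oriented manifold $M$ are ambient isotopic, so after such an isotopy we may assume $\xi\in Cont^{ot}(M,\triangle)$, and similarly $\xi'\in Cont^{ot}(M,\triangle)$ \emph{for the same pair $(\triangle,p)$}. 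An ambient isotopy is a homotopy of plane fields, so the normalized structures are still homotopic to one another.

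Next I would improve this to a homotopy inside $Distr(M,\triangle)$. Evaluation at $p$ is a Serre fibration from the space of all cooriented $2$-plane fields on $M$ onto the $2$-sphere of cooriented $2$-planes in $T_pM$, whose fiber over $T_p\triangle$ is exactly $Distr(M,\triangle)$; since $S^2$ is simply connected, the homotopy lifting property lets one deform, rel endpoints, any path between two elements of $Distr(M,\triangle)$ to a path lying entirely in $Distr(M,\triangle)$. Thus the normalized $\xi$ and $\xi'$ are homotopic through elements of $Distr(M,\triangle)$. Finally, since $j\colon Cont^{ot}(M,\triangle)\to Distr(M,\triangle)$ is a homotopy equivalence it is a bijection on $\pi_0$, so $\xi$ and $\xi'$ are joined by a path of overtwisted contact structures in $Cont^{ot}(M,\triangle)$; as $M$ is closed, Gray's stability theorem turns this path into an ambient isotopy of $M$ carrying $\xi$ to $\xi'$. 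Composing with the normalizing isotopies of the previous step gives the desired isotopy between the original contact structures.

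Apart from Eliashberg's theorem, everything above is soft. The one point demanding genuine care is the normalization step — in particular, arranging that a single disk $\triangle$ and marked point $p$ work for both contact structures — which rests on the uniqueness of a standard neighborhood of an overtwisted disk and on the homogeneity of the connected manifold $M$; the homotopy-theoretic bookkeeping and the appeal to Gray stability are routine.
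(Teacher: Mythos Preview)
Your argument is logically correct: Theorem~\ref{OT} is indeed a formal corollary of Eliashberg's homotopy equivalence theorem via the normalization, fibration, and Gray-stability steps you outline, and the details you give (Palais' disk theorem to fix a common $\triangle$, the evaluation fibration over $S^2$ to upgrade a homotopy of plane fields to one in $Distr(M,\triangle)$, $\pi_0$-bijectivity of $j$) are all sound. This is exactly the sentence ``In particular, we have'' preceding Theorem~\ref{OT} in the introduction.

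However, this is \emph{not} the proof the paper gives, and it misses the paper's purpose entirely. The paper does not assume Eliashberg's theorem; its whole point is to furnish an \emph{independent} proof of Theorem~\ref{OT} via convex surface theory. The paper's argument proceeds by (i) fixing an overtwisted contact triangulation and isotoping $\xi$ and $\xi'$ to agree on a neighborhood of the $2$-skeleton (Proposition~\ref{2skeleton}), so that the two structures differ only on a ball $B^3$; (ii) writing $\xi|_{B^3}$ and $\xi'|_{B^3}$ as sequences of bypass attachments on $S^2\times[0,1]$ and showing, through the machinery of Sections~5--7, that any such sequence is \emph{stably isotopic} to a power $\triangle^n$ of the bypass triangle attachment (Proposition~\ref{Classifi}); and (iii) resolving the residual ambiguity $n-m$, which is a multiple of the divisibility $d$ of the Euler class, by a further isotopy near the $2$-skeleton (Lemma~8.1). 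What your reduction buys is brevity, at the cost of importing Eliashberg's full $h$-principle; what the paper's route buys is a self-contained proof living entirely inside three-dimensional convex surface theory, with the bypass calculus doing the work that Eliashberg's removal-of-singularities argument does in the original.
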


Consequently, overtwisted contact structures are completely determined by the homotopy classes of the underlying 2-plane fields. On the other hand, the classification of tight contact structures is much more subtle and contains more topological information about the ambient 3-manifold.

The goal of this paper is to provide an alternative proof of Theorem~\ref{OT} based on convex surface theory. Convex surface theory was introduced by E. Giroux in~\cite{Gi} building on the work of Eliashberg-Gromov~\cite{ElGr}. Given a closed oriented surface $\Sigma$, we consider contact structures on $\Sigma\times[0,1]$ such that $\Sigma\times\{0,1\}$ is convex. By studying the ``film picture'' of the {\em characteristic foliations} on $\Sigma\times\{t\}$ as $t$ goes from 0 to 1, Giroux showed in~\cite{Gi1} that, up to an isotopy, there are only finitely many levels $\Sigma\times\{t_i\}$, $0<t_1<\cdots<t_n<1$, which are not convex. Moreover, for small $\epsilon>0$, the characteristic foliations on $\Sigma\times\{t_i-\epsilon\}$ and $\Sigma\times\{t_i+\epsilon\}$, $i=1,2,\cdots,n$, change by a {\em bifurcation}. In~\cite{Ho}, K. Honda gave an alternative description of the bifurcation of characteristic foliations in terms of {\em dividing sets}. Namely, he defined an operation, called the {\em bypass attachment}, which combinatorially acts on the dividing set. It turns out that a bypass attachment is equivalent to a bifurcation on the level of characteristic foliations. Hence, in order to study contact structures on $\Sigma \times [0,1]$ with convex boundary, it suffices to consider the isotopy classes of contact structures given by sequences of bypass attachments. In particular, we will study sequences of (overtwisted) bypass attachments on $S^2 \times [0,1]$, which is the main ingredient in our proof of Theorem~\ref{OT}.

This paper is organized as follows. In Section 1 we recall some basic knowledge in contact geometry, in particular, convex surface theory and the definition of a bypass. Section 2 gives an outline of our approach to the classification problem. Section 3 is devoted to establishing some necessary local properties of the bypass attachment. Using techniques from previous sections, we show in Section 4 that how to isotop homotopic overtwisted contact structures so that they agree in a neighborhood of the 2-skeleton. Section 5, 6 and 7 are devoted to studying overtwisted contact structures on $S^2\times[0,1]$ which is the technical part of this paper. We finally finish the proof of Theorem~\ref{OT} in Section 8.

\section{Preliminaries}

Let $M$ be a closed, oriented 3-manifold. Throughout this paper, we only consider cooriented, positive contact structures $\xi$ on $M$, i.e., those that satisfy the following conditions:

\be

\item{there exists a global 1-form $\alpha$ such that $\xi=\ker(\alpha)$.}

\item{$\alpha \wedge d\alpha>0$, i.e., the orientation induced by the contact form $\alpha$ agrees with the orientation on $M$.}

\ee

A contact structure $\xi$ is {\em overtwisted} if there exists an embedded disk $D^2 \subset M$ such that $\xi$ is tangent to $D^2$ on $\bdry D^2$. Otherwise, $\xi$ is said to be {\em tight}. We will focus on overtwisted contact structures for the rest of this paper.

Let $\Sigma \subset M$ be a closed, embedded, oriented surface in $M$. The {\em characteristic foliation} $\Sigma_\xi$ on $\Sigma$ is by definition the integral of the singular line field $\Sigma_\xi(x)\coloneqq \xi_x \cap T_x\Sigma$. One way to describe the contact structure near $\Sigma$ is to look at its characteristic foliation.

\begin{prop}[Giroux]
Let $\xi_0$ and $\xi_1$ be two contact structures which induce the same characteristic foliation on $\Sigma$. Then there exists an isotopy $\phi_t:M \to M$, $t\in[0,1]$ fixing $\Sigma$ such that $\phi_0=id$ and $(\phi_1)_*\xi_0=\xi_1$.
\end{prop}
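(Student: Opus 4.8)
The plan is to prove the statement in a product neighborhood of $\Sigma$ by Moser's method, after first using the common characteristic foliation to bring $\xi_0$ and $\xi_1$ into matching first-order form along $\Sigma$; the isotopy of $M$ is then obtained by a cutoff. First I would choose a product neighborhood $N\cong\Sigma\times(-\delta,\delta)$ of $\Sigma=\Sigma\times\{0\}$ whose normal vector field $\partial_t$ is, along $\Sigma$, transverse to both $\xi_0$ and $\xi_1$. Such a choice exists because at every $p\in\Sigma$ the three planes $T_p\Sigma$, $\xi_0|_p$, $\xi_1|_p$ all contain the common line $\Sigma_{\xi_0}(p)=\Sigma_{\xi_1}(p)$ (two of them coinciding with $T_p\Sigma$ when $p$ is a singular point of the foliation), so their union is a proper subset of $T_pM$; a generic smooth normal field avoids all three, and since $\Sigma$ is connected we may take $\partial_t$ on the positive coorientation side of both plane fields, using that the characteristic foliation with its canonical orientation forces the coorientations of $\xi_0$ and $\xi_1$ to be compatible. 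Fixing contact $1$-forms $\alpha_i$ with $\xi_i=\ker\alpha_i$, set $\beta_i=\iota^*\alpha_i$ (where $\iota\colon\Sigma\hookrightarrow M$) and $u_i=\alpha_i(\partial_t)|_\Sigma$; by construction $u_0,u_1$ are nowhere zero and, by the orientation remark, of the same sign.

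Next I would match the first-order data of $\alpha_0$ and $\alpha_1$ along $\Sigma$. Since $\ker\beta_0=\ker\beta_1$ as oriented singular foliations, $\beta_1=h\,\beta_0$ for a smooth positive function $h$ on $\Sigma$; extending $h$ positively to $M$ and replacing $\alpha_1$ by $\alpha_1/h$ we may assume $\beta_0=\beta_1=:\beta$. Then apply to $\xi_1$ the diffeomorphism of $N$ of the form $(x,t)\mapsto(x,g(x,t))$ with $g(\,\cdot\,,0)\equiv 0$ and $\partial_t g(x,0)=u_0(x)/u_1(x)>0$: it fixes $\Sigma$, is isotopic to the identity through diffeomorphisms of this type, preserves $\beta$, and rescales the $\partial_t$-component along $\Sigma$ so that afterwards $u_0=u_1$ as well. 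At this point $\alpha_0$ and $\alpha_1$ agree as $1$-forms at every point of $\Sigma$, i.e. $\alpha_0|_{T_pM}=\alpha_1|_{T_pM}$ for all $p\in\Sigma$.

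Now set $\alpha_s=(1-s)\alpha_0+s\alpha_1$ for $s\in[0,1]$. Along $\Sigma$ the $1$-form $\alpha_1-\alpha_0$ vanishes, so the coefficient of $s^2$ in $\alpha_s\wedge d\alpha_s$, namely $(\alpha_1-\alpha_0)\wedge d(\alpha_1-\alpha_0)$, vanishes there; hence $(\alpha_s\wedge d\alpha_s)|_{T_pM}$ is affine in $s$ for $p\in\Sigma$, and being a convex combination of the two positive endpoint values it is a positive volume form for every $s$. By openness of the contact condition together with compactness of $\Sigma\times[0,1]$, we may shrink $\delta$ so that each $\alpha_s$ is contact on $N$, with $\xi_s:=\ker\alpha_s$ inducing the foliation $\ker\beta=\Sigma_{\xi_0}$ on $\Sigma$. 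Then I would run the standard Gray stability argument: there is a unique $X_s\in\xi_s$, smooth in $s$, with $\iota_{X_s}d\alpha_s=\dot\alpha_s(R_s)\,\alpha_s-\dot\alpha_s$, where $R_s$ is the Reeb field of $\alpha_s$ and $\dot\alpha_s=\alpha_1-\alpha_0$, since the right-hand side annihilates $R_s$ and $d\alpha_s|_{\xi_s}$ is nondegenerate; its flow $\phi_s$ (with $\phi_0=\mathrm{id}$) satisfies $\tfrac{d}{ds}(\phi_s^*\alpha_s)=$ (a positive function) $\cdot\,\phi_s^*\alpha_s$, whence $(\phi_1)_*\xi_0=\xi_1$ on $N$. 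The crucial point is that $\phi_s$ fixes $\Sigma$ pointwise: by the normalization of the previous step $\dot\alpha_s$ vanishes identically along $\Sigma$, so there the equation for $X_s$ reads $\iota_{X_s}d\alpha_s=0$, which forces $X_s|_\Sigma=0$, hence $\phi_s|_\Sigma=\mathrm{id}$. Finally, multiplying $X_s$ by a bump function equal to $1$ near $\Sigma$ and supported in $N$ and integrating produces an isotopy of all of $M$ fixing $\Sigma$, equal to the identity at $s=0$, with $(\phi_1)_*\xi_0=\xi_1$ near $\Sigma$; composing with the (identity-on-$\Sigma$, isotopic-to-identity) diffeomorphisms introduced above gives the isotopy asserted in the statement.

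The only nonroutine point is the normalization step --- more precisely, checking that the ratio $\beta_1/\beta_0$ extends to a smooth positive function across the singularities of the characteristic foliation, where both $\beta_i$ vanish. This is exactly the place where one must use that the characteristic foliation remembers the germ of $\xi$ along $\Sigma$: for a singular foliation of the relevant (generic, e.g.\ Morse--Smale) type, the oriented foliation determines the $1$-jet of a directing vector field at each singularity up to a positive scalar, so the directing fields $Z_0,Z_1$ of $\beta_0,\beta_1$ are positively proportional to first order there, which makes $h$ smooth and positive. Everything else is general position (the choice of $\partial_t$) and a by-now routine application of Moser's trick.
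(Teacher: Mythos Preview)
The paper does not prove this proposition; it is quoted without proof as a background result of Giroux in the preliminaries. There is therefore nothing to compare against.

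Your argument is the standard Moser-type proof of this fact (essentially the one in Giroux's original paper and in Geiges, Theorem~2.5.22), and it is correct. The normalization step bringing $\alpha_0$ and $\alpha_1$ into agreement along $\Sigma$, followed by the observation that $\dot\alpha_s|_\Sigma=0$ forces the Moser vector field $X_s$ to vanish along $\Sigma$, is exactly the clean way to get the isotopy to fix $\Sigma$ pointwise rather than merely setwise.

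You are also right that the only genuinely delicate point is the smooth positive extension of $h=\beta_1/\beta_0$ across the singular set. Two remarks sharpen your discussion. First, the contact condition itself forces $d\beta_i\neq 0$ at every singularity, so the directing vector fields $Y_i$ have nonzero divergence there; when the linearization of $Y_0$ is invertible (the elliptic and hyperbolic cases), a direct Hadamard-lemma argument shows that any smooth $Y_1$ pointwise proportional to $Y_0$ is of the form $hY_0$ with $h$ smooth---no global Morse--Smale hypothesis is needed. Second, if one wants to avoid this issue entirely (e.g.\ for possibly degenerate singularities), one can instead normalize only $u_i=\alpha_i(\partial_t)|_\Sigma$, which is manifestly smooth and nonvanishing; the linear interpolation $\alpha_s$ is then still contact near $\Sigma$ and the Moser field is tangent to $\Sigma$ (lying in the common characteristic direction), so the flow preserves $\Sigma$ setwise, and one upgrades to pointwise fixing by a final isotopy of $\Sigma$ preserving the foliation. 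Either route closes the gap you flagged.
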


Possibly after a $C^{\infty}$-small perturbation, we can always assume that $\Sigma\subset M$ is {\em convex}, i.e., there exists a vector field $v$ transverse to $\Sigma$ such that the flow of $v$ preserves the contact structure. Using this transverse contact vector field $v$, we define the {\em dividing set} on $\Sigma$ to be $\Gamma_\Sigma\coloneqq \{x\in\Sigma~|~v_x \in \xi_x\}$. Note that the isotopy class of $\Gamma_\Sigma$ does not depend on the choice of $v$. We refer to~\cite{Gi} for a more detailed treatment of basic properties of convex surfaces. The significance of dividing sets in contact geometry is made clear by {\em Giroux's flexibility theorem}.

\begin{thm}[Giroux] \label{Flex}
Assume $\Sigma$ is convex with characteristic foliation $\Sigma_\xi$, contact vector field $v$, and dividing set $\Gamma_\Sigma$. Let $\mathscr{F}$ be another singular foliation on $\Sigma$ divided by $\Gamma_\Sigma$. Then there exists an isotopy $\phi_t: M \to M, t\in [0,1]$ such that

\be

\item{$\phi_0=id$ and $\phi_t|_{\Gamma_\Sigma}=id$ for all $t$.}

\item{$v$ is transverse to $\phi_t(\Sigma)$ for all $t$.}

\item{$\phi_1(\Sigma)$ has characteristic foliation $\mathscr{F}$.}

\ee
\end{thm}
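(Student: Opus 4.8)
The plan is to pass to a contact-invariant normal form near $\Sigma$, rephrase the assertion as a statement about $1$-forms on $\Sigma$, connect the relevant data through an explicit path, and integrate that path to an ambient isotopy; the delicate point will be the behavior in a collar of $\Gamma_\Sigma$. Since $v$ is transverse to $\Sigma$ with $\mathcal L_v\xi=0$, a neighborhood of $\Sigma$ is contactomorphic to $\Sigma\times\mathbb R$ with $v=\bdry_t$ and $\xi=\ker(\beta+u\,dt)$, where $\beta\in\Omega^1(\Sigma)$ and $u\in C^\infty(\Sigma)$ do not depend on $t$; the contact condition becomes $u\,d\beta+\beta\wedge du>0$, and $\Gamma_\Sigma=u^{-1}(0)$, $\Sigma_\pm=\{\pm u>0\}$, $\Sigma_\xi=\ker\beta$. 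Call such a pair $(\beta,u)$ \emph{admissible}. For $f\in C^\infty(\Sigma)$ with $f|_{\Gamma_\Sigma}=0$, the graph $\Sigma_f=\{(x,f(x))\}$ is transverse to $v$, agrees with $\Sigma$ along $\Gamma_\Sigma$, and---since $\xi$ contains a tangent vector $(X,df(X))$ to $\Sigma_f$ exactly when $(\beta+u\,df)(X)=0$, using that $u$ is $t$-independent---its characteristic foliation, transported to $\Sigma$, is $\ker(\beta+u\,df)$; reparametrizing the graph by a diffeomorphism $G$ of $\Sigma$ further replaces the directing $1$-form by $G^{*}\beta+(G^{*}u)\,df$. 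By isotopy extension it is therefore enough to produce a diffeomorphism $G$ of $\Sigma$ with $G|_{\Gamma_\Sigma}=\mathrm{id}$, a function $f$ with $f|_{\Gamma_\Sigma}=0$, and a positive function $h$---all joined to $(\mathrm{id},0,1)$ through such triples---satisfying $G^{*}\beta+(G^{*}u)\,df=h\,\beta_{1}$, where $\beta_{1}$ is any $1$-form directing $\mathscr F$.

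Such a $\beta_{1}$ comes from the hypothesis: unwinding ``$\mathscr F$ is divided by $\Gamma_\Sigma$'' produces a vector field $w$ directing $\mathscr F$ and an area form $\omega$ with $\pm\operatorname{div}_{\omega}w>0$ on $\Sigma_{\pm}$; putting $\beta_{1}=\iota_{w}\omega$ (so $\ker\beta_{1}=\mathscr F$, $d\beta_{1}=(\operatorname{div}_{\omega}w)\,\omega$) and choosing $u_{1}$ with $u_{1}^{-1}(0)=\Gamma_\Sigma$, $\operatorname{sign}u_{1}=\operatorname{sign}u$, $du_{1}\neq0$ on $\Gamma_\Sigma$, one checks that $(\beta_{1},u_{1})$ is admissible. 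Next, connect $(\beta,u)$ to $(\beta_{1},u_{1})$ through admissible pairs. The rescaling $(\beta,u)\mapsto(c\beta,cu)$ by a positive $c\in C^\infty(\Sigma)$ preserves admissibility---one computes $cu\,d(c\beta)+(c\beta)\wedge d(cu)=c^{2}\bigl(u\,d\beta+\beta\wedge du\bigr)$---and preserves both $\ker\beta$ and $u^{-1}(0)$, so after a generic perturbation making the two zero sets transverse, the rescaling path $c_{s}=(1-s)+s(u_{1}/u)$ joins $(\beta,u)$ to $(c\beta,u_{1})$; and since $u\,d\beta'+\beta'\wedge du$ is linear in $\beta'$, the straight segment $(1-s)c\beta+s\beta_{1}$ joins $(c\beta,u_{1})$ to $(\beta_{1},u_{1})$ through admissible pairs.

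It remains to integrate a path $(\beta_{s},u_{s})$ of admissible pairs to an isotopy, i.e., to produce the triples $(G_{s},f_{s},h_{s})$ depending smoothly on $s$. In a collar $\Gamma_\Sigma\times(-\epsilon,\epsilon)$---where $u_{s}$ vanishes along $\Gamma_\Sigma$---one first chooses diffeomorphisms $G_{s}$ fixing $\Gamma_\Sigma$ pointwise and carrying $\ker\beta$ to $\ker\beta_{s}$ there, which is possible because both foliations are transverse to $\Gamma_\Sigma$; this arranges $G_{s}^{*}\beta=\beta_{s}$ near $\Gamma_\Sigma$, so $f_{s}$ will be constant (hence $0$) there. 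Away from $\Gamma_\Sigma$, where $u_{s}$ is nowhere zero, one solves $(G_{s}^{*}u_{s})\,df_{s}=h_{s}\beta_{s}-G_{s}^{*}\beta$ on each component of $\Sigma\setminus\Gamma_\Sigma$, exploiting the freedom in the positive function $h_{s}$ (taken $\equiv1$ near $\Gamma_\Sigma$) to make the right-hand side exact after dividing by $G_{s}^{*}u_{s}$. The surfaces $\{t=f_{s}\circ G_{s}^{-1}\}$ reparametrized by $G_{s}$ then interpolate from $\Sigma$ to a surface with characteristic foliation $\ker\beta_{1}=\mathscr F$, and an ambient extension fixing $\Gamma_\Sigma$ and keeping $v$ transverse is the desired $\phi_{s}$.

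The main obstacle is the last step in the collar of $\Gamma_\Sigma$. There the normal component $u$ degenerates, the graphing perturbation $\beta\mapsto\beta+u\,df$ has no effect, and the characteristic foliation along $\Gamma_\Sigma$ can be changed only by a genuine shear of the surface---an isotopy fixing $\Gamma_\Sigma$ pointwise but with nontrivial differential along it; one must perform this shear keeping the surface transverse to $v$ and the contact inequality intact, and simultaneously ensure that the residual de Rham obstruction on $\Sigma\setminus\Gamma_\Sigma$ can indeed be absorbed into $h_{s}$---which requires controlling the recurrence and closed leaves of $\mathscr F$, and is exactly the point at which the hypothesis that $\mathscr F$ is divided by \emph{the same} $\Gamma_\Sigma$, rather than by some weaker datum, is indispensable. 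The remaining ingredients---the normal-form reduction and the convexity/rescaling argument---are routine.
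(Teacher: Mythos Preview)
The paper does not prove this theorem. It is stated in the preliminaries as a result of Giroux, with a reference to~\cite{Gi}, and is used throughout as a black box; no argument for it appears anywhere in the text. So there is no ``paper's own proof'' to compare your proposal against.

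As to your sketch on its own merits: the overall architecture---passing to the $t$-invariant model $\ker(\beta+u\,dt)$, realizing nearby characteristic foliations as graphs, producing an admissible pair for $\mathscr F$, and interpolating---is the standard route and is sound. Two points deserve correction or completion. First, the clause ``after a generic perturbation making the two zero sets transverse'' is misplaced: by hypothesis both $u$ and $u_1$ vanish exactly on $\Gamma_\Sigma$, to first order and with matching signs, so $u_1/u$ already extends to a smooth positive function on $\Sigma$ and no perturbation is needed; what you wrote suggests a different situation. Second, and more seriously, you explicitly flag the collar step as ``the main obstacle'' and then do not actually carry it out: you assert one can choose $G_s$ near $\Gamma_\Sigma$ and absorb the de~Rham obstruction into $h_s$ on $\Sigma\setminus\Gamma_\Sigma$, but the mechanism by which the positivity of $u_s\,d\beta_s+\beta_s\wedge du_s$ forces the needed exactness (via the expanding/contracting dynamics transverse to $\Gamma_\Sigma$) is precisely the content of Giroux's argument, and your last paragraph reads as an acknowledgment that this is left undone. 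As written, the proposal is a correct plan with the decisive lemma still owed.
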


We now look at contact structures on $\Sigma\times[0,1]$ with convex boundary. The first important result relating to this problem is the following theorem due to Giroux.

\begin{thm}[Giroux] \label{film pic}
Let $\xi$ be a contact structure on $W=\Sigma\times[0,1]$ so that $\Sigma\times\{0,1\}$ is convex. There exists an isotopy relative to the boundary $\phi_s:W \to W$, $s\in[0,1]$, such that the surfaces $\phi_1(\Sigma\times\{t\})$ are convex for all but finitely many $t\in[0,1]$ where the characteristic foliations satisfy the following properties:
\be
\item{The singularities and closed orbits are all non-degenerate.}
\item{The limit set of any half-orbit is either a singularity or a closed orbit.}
\item{There exists a single ``retrogradient'' saddle-saddle connection, i.e., an orbit from a negative hyperbolic point to a positive hyperbolic point.}
\ee
\end{thm}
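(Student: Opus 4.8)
The plan is to reduce Theorem~\ref{film pic} to a parametric Peixoto-type genericity statement for singular line fields on $\Sigma$, combined with Giroux's convexity criterion~\cite{Gi} and a parametric version of the flexibility theorem (Theorem~\ref{Flex}). Recall that, by the convexity criterion, a surface $S\subset M$ is convex if and only if its characteristic foliation $S_\xi$ has only nondegenerate singularities, only hyperbolic closed orbits, and no \emph{retrograde} saddle connection, i.e.\ no orbit running from a negative hyperbolic point to a positive one; in particular a \emph{direct} ($+\to-$) saddle connection still divides, so a slice carrying one is convex. Hence it is enough to homotope the movie $t\mapsto\Sigma\times\{t\}$ so that, first, the induced $1$-parameter family of characteristic foliations $\mathscr F_t$ is generic in the sense of parametric surface dynamics, and then to excise every exceptional event other than the retrograde saddle connections by local modifications that keep all intermediate slices convex.

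For the first part I would regard the movie as an embedding $F\colon\Sigma\times[0,1]\to M$ that is a product near $\Sigma\times\{0,1\}$, and for each such $F$ form the path $t\mapsto\mathscr F_t$ in the space $\mathcal{L}(\Sigma)$ of singular line fields on $\Sigma$ obtained by pulling back $\xi\cap T(\Sigma\times\{t\})$. The non-Morse--Smale locus $\mathcal{B}\subset\mathcal{L}(\Sigma)$ is a stratified set of codimension $\ge 1$ whose codimension-one part consists of line fields with exactly one degeneracy: a saddle--node singularity, a semistable closed orbit, or a saddle connection. The key local observation is that a perturbation of $F$ supported in a Darboux ball around a point $q\in\Sigma\times\{t_0\}$ realizes an arbitrary jet-level perturbation of $\mathscr F_t$ near $q$ for $t$ close to $t_0$, since one is free to tilt the tangent plane of the slice; the family of perturbations of $\mathscr F_\bullet$ obtained by varying $F$ is therefore rich enough for Thom's parametric transversality theorem to apply. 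Combined with the closing-lemma-type perturbations available for flows on surfaces (Peixoto), this shows that for a $C^\infty$-generic $F$ the path $t\mapsto\mathscr F_t$ meets $\mathcal{B}$ at finitely many times $t_1<\cdots<t_N$, transversally and each with a single codimension-one degeneracy. In particular conditions (1) and (2) hold at every exceptional time, with a normal form for each transition.

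Next I would remove the inessential events. At a saddle--node time or a semistable-orbit time the neighbouring slices $\Sigma\times\{t_i\pm\epsilon\}$ are convex with the \emph{same} dividing set, since a birth or death of an elliptic--hyperbolic pair, or of a closed orbit, inside a region of fixed sign leaves $\Gamma$ unchanged; the same holds at a direct-saddle-connection time. In each of these cases the parametric flexibility theorem lets me replace the sub-movie over $[t_i-\epsilon,t_i+\epsilon]$ by one consisting entirely of convex surfaces that interpolate the two boundary foliations. After finitely many such replacements the only surviving exceptional times carry a single retrograde saddle connection, which is precisely condition (3); a final small perturbation keeps these times distinct and restores the Morse--Smale property of the remainder of each such foliation, so (1)--(3) hold at each $t_i$ while every other slice is convex, and reassembling the local isotopies yields the desired $\phi_s$.

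The hard part will be the realization step, i.e.\ showing that generic perturbations of the \emph{surface} produce generic perturbations of the \emph{family of characteristic foliations}. Jet-level perturbations are unobstructed by the ambient Darboux model, which handles the singularities and the saddle connections, but closed orbits are non-local objects, so making them hyperbolic and placing their separatrices in general position requires the surface-flow closing lemma and its parametric refinement, and one must check that all these perturbations can be realized by honest isotopies of $\Sigma\times[0,1]$ relative to the boundary, coherently over all $t$ at once. A secondary difficulty is the bookkeeping in the excision step, ensuring that removing one exceptional event neither creates a new degeneracy nor causes a collision of exceptional times.
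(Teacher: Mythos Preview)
The paper does not contain a proof of Theorem~\ref{film pic}; it is stated as a result of Giroux and cited to~\cite{Gi1}, then used as a black box. So there is no ``paper's own proof'' to compare your proposal against.

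That said, your outline is essentially the strategy Giroux follows in~\cite{Gi1}: perturb the movie so that $t\mapsto\mathscr F_t$ is a generic one-parameter family of singular foliations (using Sotomayor's description of generic arcs of surface vector fields, which plays the role of your ``parametric Peixoto''), and then argue that every codimension-one event other than a retrograde saddle connection can be passed through while keeping the slices convex. Two points deserve more care than you give them. First, your claim that a semistable closed orbit leaves the dividing set unchanged is not correct as stated: attracting and repelling closed orbits lie on opposite sides of $\Gamma$, so the birth or death of such a pair \emph{does} change $\Gamma$ as a multicurve, and the absorption argument is rather that the foliation at the degenerate moment is still divided (by a curve parallel to the semistable orbit), not that nothing happens. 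Second, ``parametric flexibility'' is not quite the right tool for the excision; what one actually uses is that at a saddle--node or direct-connection time the foliation itself still admits a dividing set, so the slice is already convex and no replacement is needed, whereas the retrograde connection is precisely the event at which dividedness fails.

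Your identification of the realization step as the hard part is accurate: the genuine work in~\cite{Gi1} is showing that perturbations of the embedded family of surfaces surject onto the relevant perturbations of the foliation family, including the non-local ones needed to make closed orbits hyperbolic and to place separatrices in general position, all coherently in $t$ and relative to the boundary.
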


In the light of Giroux's flexibility theorem, one should expect a corresponding ``film picture'' of dividing sets on convex surfaces. It turns out that the correct notion corresponding to a bifurcation is the {\em bypass attachment}, which we now describe.

\begin{defn}\label{bypass}
Let $\Sigma$ be a convex surface and $\alpha$ be a Legendrian arc in $\Sigma$ which intersects $\Gamma_\Sigma$ in three points, two of which are endpoints of $\alpha$. A {\em bypass} is a convex half-disk $D$ with Legendrian boundary, where $D \cap \Sigma=\alpha$, $D\pitchfork\Sigma$, and $tb(\bdry D)=-1$. We call $\alpha$ an admissible arc, and $D$ a bypass along $\alpha$ on $\Sigma$.
\end{defn}

\begin{rmk}
The admissible arc $\alpha$ in the above definition is also known as the {\em arc of attachment} for a bypass in literature.
\end{rmk}

\begin{rmk}
We do not distinguish isotopic admissible arcs $\alpha_0$ and $\alpha_1$, i.e., if there exists a path of admissible arcs $\alpha_t$, $t\in[0,1]$ connecting them.
\end{rmk}

The following lemma shows how a bypass attachment combinatorially acts on the dividing set.

\begin{lemma}[Honda]
Following the terminology from Definition~\ref{bypass}, let $D$ be a bypass along $\alpha$ on $\Sigma$. There exists a neighborhood of $\Sigma\cup D\subset M$ diffeomorphic to $\Sigma\times[0,1]$, such that $\Sigma\times\{0,1\}$ are convex, and $\Gamma_{\Sigma\times\{1\}}$ is obtained from $\Gamma_{\Sigma\times\{0\}}$ by performing the {\em bypass attachment} operation as depicted in Figure~\ref{BypassAttach} in a neighborhood of $\alpha$.

\end{lemma}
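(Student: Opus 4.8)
The plan is to realize the bypass half-disk $D$ inside a standard model and then read off the characteristic foliation — hence the dividing set — of the two convex surfaces bounding a collar of $\Sigma\cup D$. First I would invoke Giroux's flexibility theorem (Theorem~\ref{Flex}): since only the isotopy class of $\Gamma_\Sigma$ matters, I may assume that near $\alpha$ the characteristic foliation $\Sigma_\xi$ is in a convenient Morse--Smale form, with $\Gamma_\Sigma$ meeting $\alpha$ in exactly three points and with $\alpha$ Legendrian. Because the statement is entirely local — both the hypothesis ($D\cap\Sigma=\alpha$, $D\pitchfork\Sigma$, $tb(\bdry D)=-1$) and the asserted conclusion (Figure~\ref{BypassAttach}) concern only a neighborhood of $\alpha$ — it suffices to build one explicit local model of a contact structure on a ball containing $\Sigma$ and $D$ with the prescribed data, compute there, and then transport the answer back by Giroux's proposition on characteristic foliations together with Theorem~\ref{Flex}.

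Next I would construct the model. Take coordinates so that a neighborhood of $\alpha$ in $\Sigma$ is a rectangle $R\subset\mathbb{R}^2$, and thicken to $R\times(-\delta,\delta)$ with a contact form of the form $\alpha_{ct}=dz+\beta$ for a suitable $1$-form $\beta$ on $R$ chosen so that $\Gamma_\Sigma$ and $\Sigma_\xi$ look as in the left-hand side of Figure~\ref{BypassAttach}. The half-disk $D$ is attached along $\alpha$; the condition $tb(\bdry D)=-1$ pins down, up to contact isotopy fixing $\alpha$, the germ of $\xi$ along $D$ (this is the standard ``$tb=-1$ half-disk'' whose existence and uniqueness are part of the convex surface package). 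One then observes that $\Sigma\cup D$ has a neighborhood diffeomorphic to $\Sigma\times[0,1]$ — push $\Sigma$ across $D$ — and that $\Sigma\times\{0\}$, $\Sigma\times\{1\}$ can be taken convex after a $C^\infty$-small perturbation. Using the contact vector field transverse to these two surfaces, I compute the two dividing sets directly in the model: away from $\alpha$ nothing changes, while in the rectangle $R$ the three intersection points of $\Gamma_\Sigma$ with $\alpha$ get reconnected exactly as the bypass move prescribes. The verification that the new foliation is divided by precisely this modified curve is the content of the computation and is where Figure~\ref{BypassAttach} is used.

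The main obstacle is the local contact-geometric computation showing that attaching a half-disk with $tb(\bdry D)=-1$ produces exactly the depicted reconnection of the dividing curves, and no other configuration. Concretely, one must check two things: that the germ of $\xi$ along $D$ is determined by $tb(\bdry D)=-1$ (so the model is not making a choice), and that pushing $\Sigma$ past $D$ and re-convexifying yields a dividing set whose isotopy class is the one in Figure~\ref{BypassAttach} rather than, say, the result of a trivial bypass or an inverse bypass. The first point follows from the classification of tight contact structures on a half-ball with prescribed convex boundary / Legendrian boundary of the given $tb$; the second is handled by an explicit model computation as in Honda's original treatment, keeping careful track of signs and coorientations so that the positive/negative regions on the two sides of $\Gamma$ are matched correctly. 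Everything else — the reduction to a neighborhood of $\alpha$, the appeal to Giroux flexibility, the perturbation to convex boundary — is routine once the local picture is nailed down.
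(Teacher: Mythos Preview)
The paper does not prove this lemma; it is quoted as a result of Honda with a reference to~\cite{Ho} and no argument is given. So there is nothing in the present text to compare your proposal against.

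That said, your outline is a correct way to prove the statement and is close in spirit to Honda's original argument. One remark: Honda does not build an explicit contact form on $R\times(-\delta,\delta)$ and compute the new foliation directly, as you propose. Instead he uses the \emph{edge-rounding lemma}: the surfaces $\Sigma$ and $D$ are both convex and meet transversely along the Legendrian arc $\alpha$, so one knows exactly how their dividing sets connect when the corner is smoothed. Applying edge-rounding twice (once on each side of $D$) to the surface obtained by pushing $\Sigma$ across $D$ gives the dividing set of $\Sigma\times\{1\}$ combinatorially, without ever writing down a contact form. This bypasses (no pun intended) the explicit model computation you describe, and in particular makes the ``no other configuration'' worry disappear: the reconnection pattern is forced by how $\Gamma_D$ (a single arc, since $tb(\bdry D)=-1$) interleaves with $\Gamma_\Sigma\cap\alpha$. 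Your appeal to a classification of tight structures on a half-ball to pin down the germ along $D$ is heavier machinery than needed; the convexity of $D$ together with $tb(\bdry D)=-1$ already determines $\Gamma_D$ up to isotopy, and that is all the edge-rounding argument requires.
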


\begin{figure}[h]
  \includegraphics[width=0.35\textwidth]{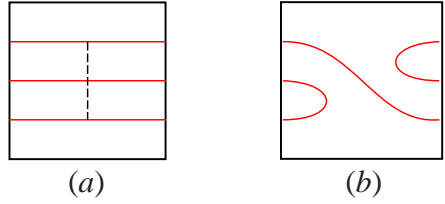}
  \put(-142,-12){$(a)$}
  \put(-37,-12){$(b)$}
  \caption{A bypass attachment along $\alpha$. (a) The dividing set on $\Sigma\times\{0\}$ before the bypass is attached. (b) The dividing set on $\Sigma\times\{1\}$ after the bypass is attached.}
  \label{BypassAttach}
\end{figure}

It is worthwhile to mention that there are two distinguished bypasses, namely, the trivial bypass and the overtwisted bypass as depicted in Figure~\ref{TrivialOT}. The effect of a trivial bypass attachment is isotopic to an $I$-invariant contact structure where no bypass is attached, while the overtwisted bypass attachment immediately introduces an overtwisted disk in the local model, hence, for example, is disallowed in tight contact manifolds.
\begin{figure}[h]
  \begin{overpic}[scale=.4]{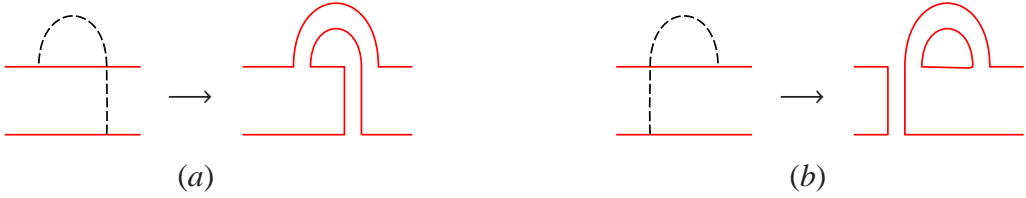}
  \put(16,3){$\longrightarrow$}
  \put(76,3){$\longrightarrow$}
  \put(17,-5){$(a)$}
  \put(77,-5){$(b)$}
  \end{overpic}
  \newline
  \caption{(a) The trivial bypass attachment. (b) The overtwisted bypass attachment.}
  \label{TrivialOT}
\end{figure}

\section{Outline of the proof}

Let $\xi$ and $\xi'$ be two overtwisted contact structures on $M$, homotopic to each other as 2-plane field distributions. Our approach to Theorem~\ref{OT} has three main steps.\\

\noindent\textit{Step 1.} Fix a triangulation $T$ of $M$. Isotop $\xi$ and $\xi'$ through contact structures such that $T$ becomes an {\em overtwisted contact triangulation} in the sense that the 1-skeleton $T^{(1)}$ is a Legendrian graph, the 2-skeleton $T^{(2)}$ is convex and each 3-cell is an overtwisted ball with respect to both contact structures. We first show that if $e(\xi)=e(\xi') \in H^2(M;\mathbb{Z})$, then one can isotop $\xi$ and $\xi'$ so that they agree in a neighborhood of $T^{(2)}$.\\

\noindent\textit{Step 2.} We can assume that there exists a ball $B^3 \subset M$ such that $\xi$ and $\xi'$ agree on $M\setminus B^3$. Taking a small Darboux ball $B^3_{std} \subset B^3$, observe that $\xi|_{B^3}$ and $\xi'|_{B^3}$ can both be realized as attaching sequences of bypasses to $B^3_{std}$. In section 5, we will define the notion of a {\em stable isotopy}. Then we show that both of sequences of bypass are stably isotopic to some power of the {\em bypass triangle attachment}. Moreover, the boundary relative homotopy classes of $\xi|_{B^3}$ and $\xi'|_{B^3}$, measured by the Hopf invariant, are uniquely determined by the number of bypass triangles attached according to~\cite{Hu}.\\

\noindent\textit{Step 3.} By elementary obstruction theory, the Hopf invariants of $\xi|_{B^3}$ and $\xi'|_{B^3}$ are not necessarily the same, but they can at most differ by an integral multiple of the divisibility of the Euler class of either $\xi$ or $\xi'$. See Section 8 for the definition of the divisibility. We show that this ambiguity can be resolved by further isotoping the contact structures in a neighborhood of $T^{(2)}$. This finishes the proof of Theorem~\ref{OT}.

\section{Local properties of bypass attachments}

Let $M$ be an overtwisted contact 3-manifold. Let $\Sigma \subset M$ be a closed convex surface with dividing set $\Gamma_\Sigma$. For convenience, we choose a metric on $M$ and denote $M\setminus\Sigma$ the metric closure of the open manifold $M-\Sigma$. In this paper, we restrict ourself to the case that each connected component of $M \setminus \Sigma$ is overtwisted\footnote{In general it is possible that all components of $M \setminus \Sigma$ are tight even if $M$ is overtwisted.}. In order to isotop convex surfaces through bypasses freely, we must show that there are enough bypasses. In fact, bypasses exist along any admissible Legendrian arc on $\Sigma$ provided that the contact structure is overtwisted. This is the content of the following lemma.

\begin{lemma} \label{Abund}
Suppose that $M\setminus\Sigma$ is overtwisted. For any admissible arc $\alpha\subset\Sigma$, there exists a bypass along $\alpha$ in $M\setminus\Sigma$. If $\Sigma$ separates $M$ into two overtwisted components, then there exists such a bypass in each component.
\end{lemma}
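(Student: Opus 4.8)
The plan is to reduce the lemma to the following local existence statement and prove that: inside the chosen overtwisted component $N$ of $M\setminus\Sigma$ there is an embedded convex half-disk $D$ with Legendrian boundary such that $D\cap\Sigma=\alpha$, $D\pitchfork\Sigma$, and $tb(\bdry D)=-1$. By Definition~\ref{bypass} such a $D$ is a bypass along $\alpha$, and Honda's lemma then guarantees that attaching it realizes the operation of Figure~\ref{BypassAttach}. Since the entire construction takes place in $N$, the last sentence of the lemma (a bypass in each overtwisted component when $\Sigma$ separates $M$) follows by running the argument once in each component; so from now on fix one overtwisted component $N$, with $\alpha\subset\bdry N=\Sigma$.

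First I would write down a topological model for $D$. Choose a thin ball neighborhood $V$ of $\alpha$ in $N$, small enough that $N\setminus V$ still contains an overtwisted disk; this is possible because $\alpha\subset\bdry N$ is disjoint from $\mathrm{int}\,N$, hence at positive distance from any fixed overtwisted disk in $\mathrm{int}\,N$, so a sufficiently thin $V$ misses it. In local coordinates on $V$ with $\Sigma=\{z=0\}$ and $\alpha\subset\{y=z=0\}$, take $D_0\subset V$ to be a half-disk contained in $\{y=0,\ z\ge 0\}$ meeting $\Sigma$ exactly along $\alpha$ and transverse to $\Sigma$ there (note $T D_0+T\Sigma=TM$ along $\alpha$ in this model), capped off by a short arc $\gamma_0$ running into $\mathrm{int}\,N$ and chosen so that $\bdry D_0=\alpha\cup\gamma_0$ is a smooth embedded circle meeting $\Sigma$ only in $\alpha$. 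The arc $\alpha$ is already Legendrian since it is admissible, so after a $C^\infty$-small perturbation of $\gamma_0$ rel endpoints I may assume $\bdry D_0$ is Legendrian, with $D_0$ still embedded in $V$, $D_0\cap\Sigma=\alpha$, and $D_0\pitchfork\Sigma$. Write $n=tb(\bdry D_0)\in\mathbb Z$.

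It remains to arrange $n=-1$, and this is the only point at which the overtwistedness hypothesis enters. If $n>-1$, I decrease $tb$ by stabilizing $\gamma_0$ inside $V$, each stabilization dropping $tb$ by one, until $tb(\bdry D_0)=-1$. If $n<-1$, I must raise $tb$: since $N\setminus V$ is overtwisted it contains $k:=-1-n\ge 1$ pairwise disjoint overtwisted disks $\Delta^1,\dots,\Delta^k$, each with Legendrian boundary, and with $tb(\bdry\Delta^i)=0$ because the dividing set of an overtwisted disk is a single circle disjoint from the boundary. Choosing pairwise disjoint Legendrian arcs in $\mathrm{int}\,N$ joining $\gamma_0$ to the $\bdry\Delta^i$ and disjoint from the interiors of $D_0$ and of the $\Delta^j$ (a routine general-position choice, these being disjointly embedded disks in the $3$-manifold $\mathrm{int}\,N$), I form the iterated Legendrian boundary connect sum $D':=D_0\,\natural\,\Delta^1\,\natural\cdots\natural\,\Delta^k$ along thin Legendrian bands supported in $\mathrm{int}\,N\setminus V$. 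Then $D'$ is again an embedded disk with Legendrian boundary, is unchanged near $\alpha$ so that $D'\cap\Sigma=\alpha$ and $D'\pitchfork\Sigma$ still hold, and by additivity of the Thurston--Bennequin invariant under Legendrian connect sum $tb(\bdry D')=n+\sum_{i=1}^{k}\bigl(tb(\bdry\Delta^i)+1\bigr)=n+k=-1$. Finally, since $\bdry D'$ is Legendrian with $tb(\bdry D')=-1\le 0$ and the surface framing on the boundary of a disk is the Seifert framing, the convexity criterion of Kanda and Honda yields a $C^\infty$-small perturbation of $D'$ rel $\bdry D'$ making it convex; as this perturbation fixes $\alpha$ and is small, $D'\cap\Sigma=\alpha$ and $D'\pitchfork\Sigma$ persist, so $D:=D'$ is the required bypass along $\alpha$ in $N$.

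The \emph{main obstacle} is the case $n<-1$ above: one must push the Thurston--Bennequin invariant of the boundary of an embedded disk up to $-1$. In a tight manifold this is impossible, since the Bennequin inequality pins $tb(\bdry D)\le -1$ for any embedded disk $D$ with no mechanism to increase it; the overtwisted disks in $N$ supply exactly the extra twisting needed, which is the sole — but essential — use of the hypothesis, and also the reason bypasses can fail to be abundant in the tight setting. The remaining ingredients (existence of arbitrarily many disjoint overtwisted disks in an overtwisted contact manifold, additivity up to $+1$ of $tb$ under Legendrian connect sum, stabilization to decrease $tb$, and the convexity criterion for surfaces with Legendrian boundary) are all standard in convex surface theory.
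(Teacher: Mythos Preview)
Your argument follows the same Etnyre--Honda/Torisu strategy the paper uses: build the bypass by connecting a half-disk based on $\alpha$ to an overtwisted disk via a Legendrian band so as to bring the Thurston--Bennequin invariant to $-1$. The paper starts from a specific bigon $B$ cobounded by $\alpha$ and a push-off $\tilde\alpha$ with $tb(\partial B)=-2$ and connects to a \emph{single} overtwisted disk; you instead start from an arbitrary half-disk $D_0$ with $tb(\partial D_0)=n$ and connect-sum with $-1-n$ overtwisted disks (or stabilize if $n>-1$). The essential difference is in the endgame: the paper explicitly constructs the characteristic foliation on the glued surface, using Fraser's pivot lemma to match foliations across the band, cancels a pair of singularities, and reads off directly that the dividing set on the resulting convex half-disk is a single arc.

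Your black-box appeal to the Kanda--Honda convexity criterion does produce a convex half-disk with $tb(\partial D')=-1$, but it does not control the dividing set beyond forcing $|\Gamma_{D'}\cap\partial D'|=2$; nothing in your argument rules out closed components of $\Gamma_{D'}$. Indeed, each overtwisted disk $\Delta^i$ you absorb carries a closed dividing circle in its interior, and this circle will typically persist after the band sum and the small perturbation. Although Definition~\ref{bypass} as written does not literally exclude this, Honda's lemma on how a bypass changes $\Gamma_\Sigma$ (used throughout the paper) tacitly assumes the single-arc model: closed circles on $D'$ would create extra closed dividing curves on the pushed-across copy of $\Sigma$. So you should either argue that some perturbation yields a single-arc $\Gamma_{D'}$ --- which is precisely what the paper's explicit foliation analysis accomplishes --- or add a short step eliminating the closed components afterward.
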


\begin{proof}
The technique for proving this lemma is essentially due to Etnyre and Honda~\cite{EH}, and independently Torisu~\cite{To}. We construct a bypass $D$ along $\alpha$ as follows. Let $\tilde{D} \subset M \setminus \Sigma$ be an overtwisted disk.

First we push the interior of $\alpha$ slightly into $M\setminus\Sigma$ with the endpoints of $\alpha$ fixed to obtain another Legendrian arc $\tilde\alpha$, such that $\alpha$ and $\tilde\alpha$ cobound a convex bigon $B$ with $tb(\bdry B)=-2$. Next, take a Legendrian arc $\gamma$ connecting $\tilde{\alpha}$ and $\bdry\tilde{D}$ in the complement of $\Sigma \cup \tilde D \cup B$, namely, the two endpoints of $\gamma$ are contained in $\tilde{\alpha}$ and $\bdry\tilde{D}$ respectively and the interior of $\gamma$ is disjoint from $\Sigma \cup \tilde{D} \cup B$ as depicted in Figure~\ref{legarc}.
\begin{figure}[h]
    \centering
    \begin{overpic}[scale=.33]{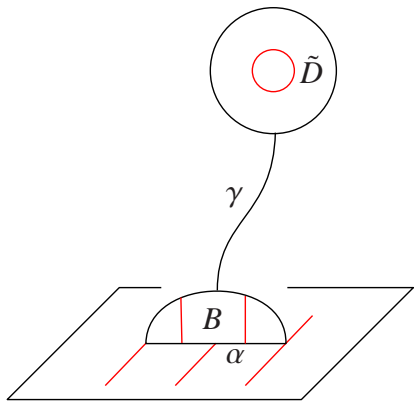}
    \put(48,17.5){$B$}
    \put(72,78){$\tilde D$}
    \put(54,49){$\gamma$}
    \put(54,9){$\alpha$}
    \end{overpic}
    \caption{The Legendrian arc $\gamma$ connecting $\bdry B$ and $\bdry \tilde D$.}
    \label{legarc}
\end{figure}
Suppose $N(\gamma) \cong \gamma\times[-\epsilon,\epsilon]$ is a band with the core $\gamma\times\{0\}$ identified with $\gamma$, such that the characteristic foliation is non-singular and is given by $\gamma\times\{t\}$, $t \in [-\epsilon,\epsilon]$. In particular $\gamma\times\{-\epsilon\}$ and $\gamma\times\{\epsilon\}$ are both Legendrian. We want to glue $N(\gamma)$ to $\tilde D$ and $B$ so that the characteristic foliations match along the common boundary. In order to do so, we recall the following lemma first observed by Fraser~\cite{Fr}.

\begin{lemma} \label{pivot}
Let $S$ be an embedded disk in a contact manifold $(M, \xi)$ with a characteristic foliation $\xi|_S$ which consists only of one positive
elliptic singularity p and unstable orbits from p which exit transversely from $\bdry S$. If $\delta_1$, $\delta_2$ are two unstable orbits meeting at $p$, and $\delta_i \cap S = p_i$, then, after a $C^\infty$-small perturbation of $S$ fixing $\bdry S$, we obtain $S'$ whose characteristic
foliation has exactly one positive elliptic singularity $p'$ and unstable orbits from $p'$ exiting transversely from $\bdry S$, and for which the orbits passing through $p_1$, $p_2$ meet tangentially at $p'$.
\end{lemma}

We first glue $N(\gamma)$ to $\tilde D$ as follows. Let $p_1 = \gamma \cap \bdry \tilde D$. By the Flexibility Theorem we may suppose that $p_1$ is a half-elliptic singular point of the characteristic foliation $\xi|_{\tilde D}$ on $\tilde D$. Consider a slightly larger disk $\tilde D' \supset \tilde D$ such that $p_1$ is an elliptic singularity of $\xi|_{\tilde D'}$. Let $S \subset \tilde D'$ be a small disk neighbothood of $p_1$, which satisfies the conditions in Lemma~\ref{pivot}. Applying Lemma~\ref{pivot}, we can perturb $S$ to get a disk $\hat D$ on which the characteristic foliation (in a neighbothood of $p_1$) looks like the one depicted in Figure~\ref{neck}. 

\begin{figure}[h]
    \centering
    \begin{overpic}[scale=.5]{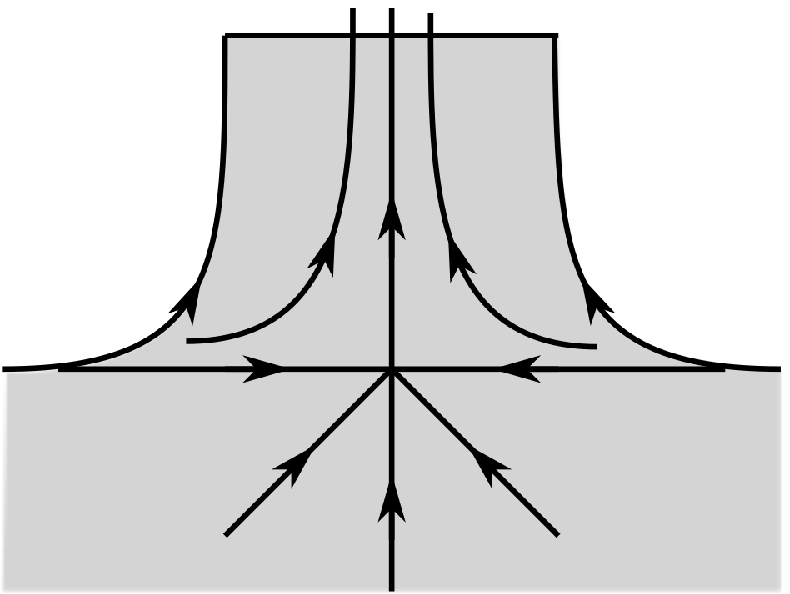}
    \put(51.5,32.5){\small{$p_1$}}
    \put(83,8){\small{$\hat D$}}
    \end{overpic}
    \caption{}
    \label{neck}
\end{figure}

Now we can glue $N(\gamma)$ to $\hat D$ in the obvious way such that the characteristic foliations match along the common boundary. We can apply the same trick to glue $N(\gamma)$ to $B$. In the end we obtain a half disk, which we denote by $\tilde D \cup N(\gamma) \cup B$ by abuse of notation, on which the characteristic foliation is as depicted in Figure~\ref{charbypass}.

\begin{figure}[h]
    \centering
    \begin{overpic}[scale=.5]{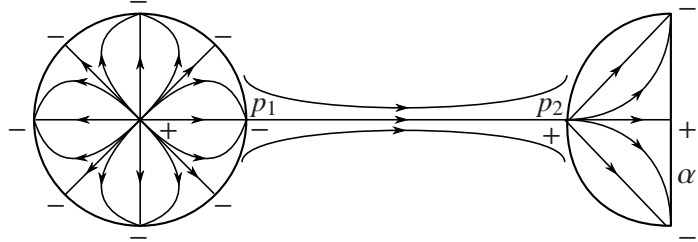}
    \put(101,7){$\alpha$}
    \put(34,18.2){\small{$p_1$}}
    \put(78.8,18.2){\small{$p_2$}}
    \put(19.7,13.8){$+$}
    \put(34,14){$-$}
    \put(79.7,13){$+$}
    \put(101,14){$+$}
    \put(101,-3){$-$}
    \put(101,33){$-$}
    \put(-4,14){$-$}
    \put(2,2){$-$}
    \put(2,28.5){$-$}
    \put(15,-3){$-$}
    \put(15,34){$-$}
    \put(28.2,2){$-$}
    \put(28.2,28.5){$-$}
    \end{overpic}
    \vspace{3mm}
    \caption{The preferred characteristic foliation on $\tilde D \cup N(\gamma) \cup B$.}
    \label{charbypass}
\end{figure}

Note that since the characteristic foliation contains a flowline from the negative half-elliptic-half-hyperbolic singularity to the positive half-elliptic-half-hyperbolic singularity, the half disk $\tilde D \cup N(\gamma) \cup B$ is not convex. However we can perform a $C^\infty$-small perturbation in a neighborhood of $p_1$ and $p_2$ to obtain a new half disk $D$ such that the singularities $p_1$ and $p_2$ are eliminated. The characteristic foliation on $D$ is given by Figure~\ref{charbypassfinal}, which is easily seen to be of Morse-Smale type. Therefore $D$ is convex with Legendrian boundary. The dividing set $\Gamma$ on $D$ has to separate the positive and negative singularities and to be transverse to the characteristic foliation. So $\Gamma$ is, up to isotopy, the half-circle as depicted in Figure~\ref{charbypassfinal} as desired, and therefore $D$ is a bypass along $\alpha$.
\begin{figure}[h]
    \centering
    \vspace{3mm}
    \begin{overpic}[scale=.5]{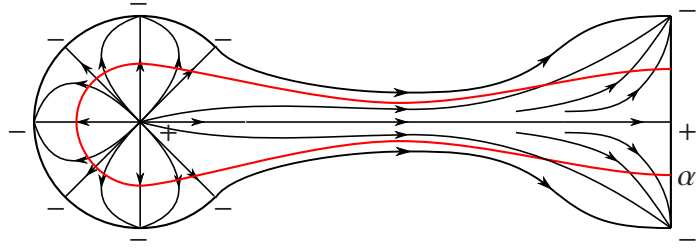}
    \put(101,7){$\alpha$}
    \put(19.7,13.8){$+$}
    \put(101,14){$+$}
    \put(101,-3){$-$}
    \put(101,33){$-$}
    \put(-4,14){$-$}
    \put(2,2){$-$}
    \put(2,28.5){$-$}
    \put(15,-3){$-$}
    \put(15,34){$-$}
    \put(28.2,2){$-$}
    \put(28.2,28.5){$-$}
    \end{overpic}
    \vspace{3mm}
    \caption{The bypass $D$ along $\alpha$.}
    \label{charbypassfinal}
\end{figure}
\end{proof}

We then show the triviality of the trivial bypass, i.e., attaching a trivial bypass does not change the isotopy class of the contact structure in a neighborhood of the convex surface. The proof essentially follows the lines of the proof of Proposition 4.9.7 in Geiges~\cite{Ge}. Here the contact structure may be either overtwisted or tight.

\begin{lemma} \label{Triviality}
Let $(\Sigma\times[0,1],\xi)$ be a contact manifold with the contact structure $\xi$ obtained by attaching a trivial bypass on $(\Sigma\times\{0\},\xi|_{\Sigma\times\{0\}})$. Then there exists another contact structure $\tilde\xi$, which is isotopic to $\xi$ relative to the boundary, such that $\Sigma\times\{t\}$ is convex with respect to $\tilde\xi$ for all $t\in[0,1]$.
\end{lemma}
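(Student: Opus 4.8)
The plan is to reduce the lemma to a computation in a $3$-ball, following the strategy of Proposition 4.9.7 in Geiges~\cite{Ge}, and to produce the desired $\tilde\xi$ by an explicit isotopy of $\Sigma$ that sweeps out the relevant ball by convex surfaces.

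\textbf{Step 1 (Localization).} The bypass attachment is supported in an arbitrarily small neighborhood of the bypass half-disk $D$. Choose such a neighborhood $N\subset\Sigma\times[0,1]$, a $3$-ball, small enough that $\bdry N$ is convex with connected dividing set and $\xi|_N$ is tight; the latter is possible precisely because $D$ is a bypass along a \emph{trivial} admissible arc rather than an overtwisted one, cf.\ Figure~\ref{TrivialOT}. Outside $N$ the surfaces $\Sigma\times\{t\}$ are already convex, so it suffices to construct a contact structure $\tilde\xi$ on $N$, isotopic to $\xi|_N$ relative to $\bdry N$, for which the pieces $(\Sigma\times\{t\})\cap N$ are convex for all $t$; one then glues back the unchanged structure on the complement. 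Observe also that, reading off the combinatorial move of Figure~\ref{TrivialOT}(a), $\Gamma_{\Sigma\times\{1\}}$ is isotopic to $\Gamma_{\Sigma\times\{0\}}$ as a dividing set.

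\textbf{Step 2 (A convex sweep-out of $N$).} In the trivial configuration the arc of attachment $\alpha$, together with a subarc of $\Gamma_\Sigma$, cobounds a half-disk $\delta\subset\Sigma$ meeting $\Gamma_\Sigma$ only along that subarc; the bypass disk $D$ together with $\delta$ then cobounds a ball $B_0\subset N$ carrying a standard tight model. I will push $\delta$ monotonically across $B_0$, a little beyond $D$, and then retract it back to a parallel copy, by an isotopy $\phi_s$ ($s\in[0,1]$) of $\Sigma$ supported in $N$, arranged so that each $\phi_s(\Sigma)$ is convex and $\bigcup_s\phi_s(\Sigma)=N$. At the finitely many values of $s$ where $\phi_s(\Sigma)$ fails to be transverse to the product direction one inserts a standard convex ``folding'' move, and one uses Giroux's flexibility theorem (Theorem~\ref{Flex}) both to keep the characteristic foliations of $\phi_s(\Sigma)$ of Morse--Smale type and to keep them matched along $\bdry N$. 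Reparametrizing $[0,1]$ by the sweep-out parameter $s$ exhibits $N$ as foliated by convex surfaces and thereby defines $\tilde\xi$ on $N$ with every $(\Sigma\times\{t\})\cap N$ convex.

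\textbf{Step 3 (Isotopy relative to the boundary).} By construction $\xi|_N$ and $\tilde\xi|_N$ agree near $\bdry N$, induce the same characteristic foliation on $\bdry N$, are both tight, and $\Gamma_{\bdry N}$ is a single circle. Uniqueness of tight contact structures on the $3$-ball with fixed boundary characteristic foliation (Eliashberg) then gives an isotopy relative to $\bdry N$ from $\xi|_N$ to $\tilde\xi|_N$; extending it by the identity yields the required $\tilde\xi$ on $\Sigma\times[0,1]$.

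\textbf{Main obstacle.} The heart of the argument is Step 2: exhibiting the ball $B_0$ with its standard model, writing down the explicit convex isotopy $\phi_s$ sweeping out $N$, and verifying convexity at every instant --- especially at the ``turnaround'' of $\delta$, where one must check that a small perturbation restores a Morse--Smale characteristic foliation and that no overtwisted disk is created, so that the tightness used in Step 3 is genuinely available. Tracking the boundary characteristic foliations throughout, via Theorem~\ref{Flex}, is the other point that requires care.
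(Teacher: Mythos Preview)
Your approach diverges substantially from the paper's, and the divergence hides the real difficulty.

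The paper's proof works directly with characteristic foliations: it arranges a specific Morse--Smale foliation on $\Sigma\times\{0\}$, identifies the single retrogradient saddle--saddle connection at the non-convex level $\Sigma\times\{1/2\}$, and then applies the Elimination Lemma (this is what Geiges' Proposition~4.9.7 actually is) to cancel an $(e_-,h_-)$ pair in a neighborhood $\Omega$ of that level. Once those singularities are gone the saddle--saddle connection disappears, every level is Morse--Smale, and hence convex. No appeal to tightness or to Eliashberg's uniqueness theorem is made.

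Your Step~1 asserts that $\xi|_N$ is tight ``precisely because $D$ is a bypass along a trivial admissible arc.'' This is the entire content of the lemma, relocated. Knowing that a neighborhood of $\Sigma_0\cup D$ is tight is equivalent to knowing that the trivial bypass model embeds in a tight contact manifold (e.g.\ in an $I$-invariant slab), and establishing that embedding is exactly the work the paper does via the Elimination Lemma --- or, alternatively, via an explicit ``folding'' construction that you gesture at in Step~2 but do not carry out. Without an independent proof of tightness, invoking Eliashberg's uniqueness in Step~3 is circular.

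Your Step~2 is also not a proof as written. The isotopy you describe --- push $\delta$ across $B_0$ past $D$, then ``retract it back'' --- is not monotone, so the surfaces $\phi_s(\Sigma)$ do not foliate $N$ and there is no diffeomorphism taking them to the product levels; the ``folding move'' you invoke is undefined. If you made this step precise (as a genuine convex sweep-out from $\Sigma\times\{0\}$ to $\Sigma\times\{1\}$), it would already prove the lemma directly and Steps~1 and~3 would be superfluous. Conversely, if you grant yourself tightness in Step~1, then Step~2 need only produce \emph{some} contact structure on $N$ with convex levels and the correct boundary --- the $I$-invariant one suffices, and no sweep-out is needed.

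In short: either carry out the folding/sweep-out honestly (which is essentially the paper's Elimination Lemma argument in different clothing), or give an independent proof that the local model is tight. As it stands, the proposal assumes the hardest step.
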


\begin{proof}
Since this is a local problem, we may assume that $\Sigma\times[0,1]$ is a neighborhood of the trivial bypass attachment. By Theorem~\ref{Flex}, any Morse-Smale type characteristic foliation adapted to $\Gamma_{\Sigma\times\{0\}}$ can be realized as the characteristic foliation of a contact structure isotopic to $\xi$ in a neighborhood of $\Sigma\times\{0\}$. In particular, we can assume that the characteristic foliation on $\Sigma\times\{0\}$ looks exactly the same as in Figure~\ref{Bif}(a) such that $e_-$ does not connect to any negative hyperbolic point other than $h_-$ along the flow line.

\begin{figure}[h]
    \begin{overpic}[scale=.3]{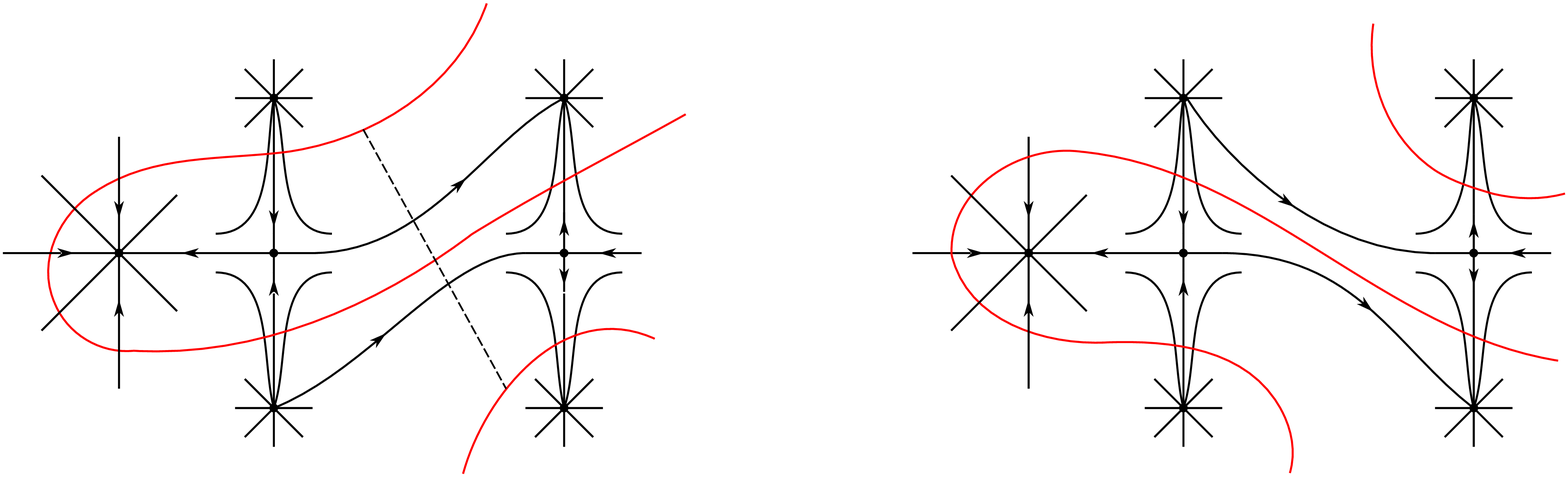}
    \put(4.4,12.9){\tiny{$e_-$}}
    \put(15.4,12.7){\tiny{$h_-$}}
    \put(33.7,12.7){\tiny{$h_+$}}
    \put(62.5,12.9){\tiny{$e_-$}}
    \put(73.3,12.7){\tiny{$h_-$}}
    \put(91.8,12.7){\tiny{$h_+$}}
    \put(20,-4){(a)}
    \put(80,-4){(b)}
    \end{overpic}
    \newline
    \caption{(a) The characteristic foliation on $\Sigma\times\{0\}$. The trivial bypass is attached along the Legendrian arc in dash line. (b) The characteristic foliation on $\Sigma\times\{1\}$ after attaching the trivial bypass. Here $e_{\pm}$ (resp. $h_{\pm}$) denote the $\pm$-elliptic (resp. $\pm$-hyperbolic) singular points of the foliation.}
    \label{Bif}
\end{figure}

Look at the characteristic foliations on $\Sigma\times\{t\}$ as $t$ goes from 0 to 1. Generically we can assume that the Morse-Smale condition fails at one single level, say, $\Sigma\times\{1/2\}$, where an unstable saddle-saddle connection has to appear as shown in Figure~\ref{SScon}(a).

\begin{figure}[h]
    \begin{overpic}[scale=.35]{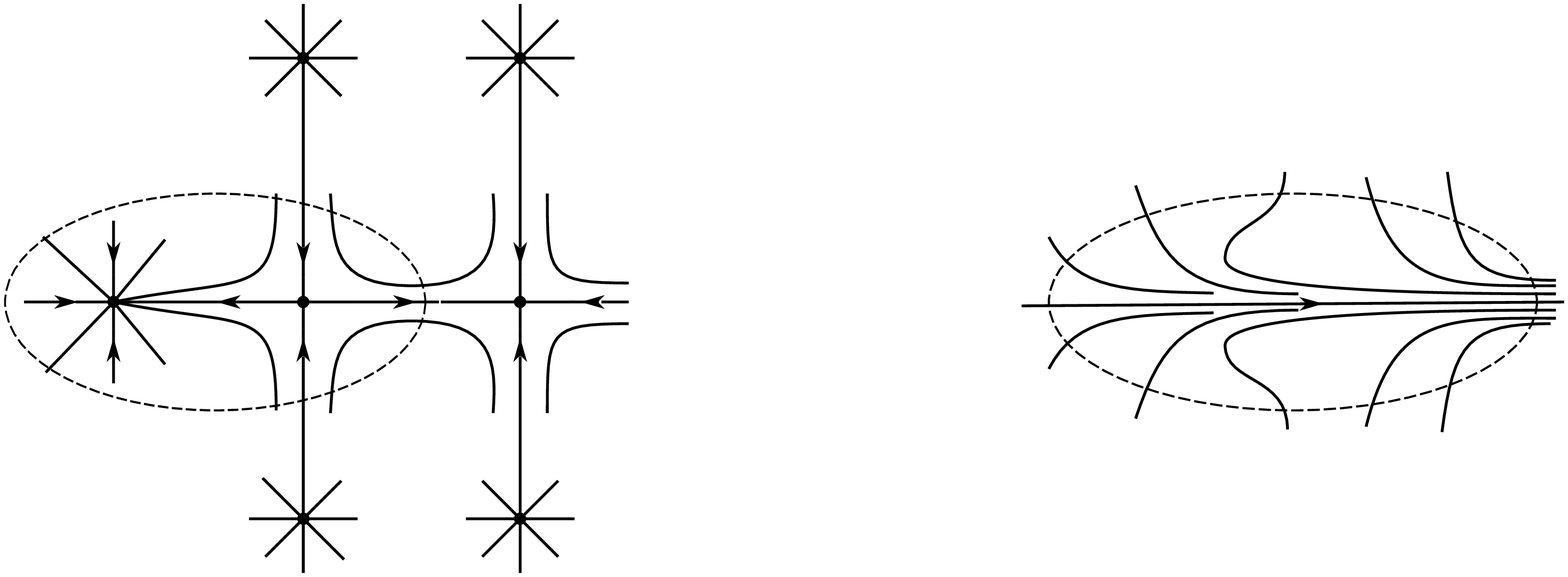}
    \put(3.4,15.8){\tiny{$e_-$}}
    \put(17,15.5){\tiny{$h_-$}}
    \put(30.5,15.5){\tiny{$h_+$}}
    \put(-1,23){$\Omega$}
    \put(66,23){$\Omega$}
    \put(20,-5){(a)}
    \put(80,-5){(b)}
    \end{overpic}
    \newline
    \caption{(a) The characteristic foliation on $\Sigma\times\{1/2\}$, where a saddle-saddle connect from $h_-$ to $h_+$ exists. The region $\Omega$ contains exactly two singular points $\{e_-,h_-\}$ which are in elimination position. (b) The nonsingular characteristic foliation on $\Omega$ after the elimination.}
    \label{SScon}
\end{figure}

Let $\Omega \subset \Sigma\times\{1/2\}$ be an open neighborhood of the flow line from $h_-$ to $e_-$ as depicted in Figure~\ref{SScon}(a). Observe that the characteristic foliation inside $\Omega$ is of Morse-Smale type, and therefore stable in the $t$-direction. According to the proof of Proposition 4.9.7\footnote{This is a stronger version of the usual Elimination Lemma.} in Geiges~\cite{Ge}, for a small $\delta>0$, there exists an isotopy $\phi_s:\Sigma\times[0,1] \to \Sigma\times[0,1]$, $s \in [0,1]$, compactly supported in $\Omega\times(1/2-2\delta,1/2+2\delta) \subset \Sigma\times[0,1]$ and $\phi_0=id$, such that $\tilde\xi=(\phi_1)_*\xi$ satisfies the following:

\be

\item{The characteristic foliation on $\Omega\times\{t\}$ with respect to $\tilde\xi$ is isotopic to the one in Figure~\ref{SScon}(b) for $t\in[1/2-\delta,1/2+\delta]$. In particular, it is nonsingular.}

\item{For $t\in(1/2-2\delta,1/2-\delta)\cup(1/2+\delta,1/2+2\delta)$, The characteristic foliation on $\Omega\times\{t\}$ with respect to $\tilde\xi$ is almost Morse-Smale except that there may exist a half-elliptic-half-hyperbolic point.}

\ee
We remark here that the above conditions are achieved in~\cite{Ge} by isotoping surfaces $\Sigma\times\{t\}$, $t\in[1/2-2\delta,1/2+2\delta]$ while fixing the contact structure $\xi$, but this is equivalent to isotoping $\xi$ while fixing $\Sigma\times\{t\}$. We will switch between these two equivalent point of view again in the proof of Proposition~\ref{2skeleton}.

Now we can make $\Sigma\times\{t\}$ convex for $t\in[1/2-\delta,1/2+\delta]$ because the only unstable saddle-saddle connection is eliminated and therefore the characteristic foliation becomes Morse-Smale. For $t\notin[1/2-\delta,1/2+\delta]$, there may exist half-elliptic-half-hyperbolic singular points, but we can as well construct a contact structure realizing this type of singularity so that each $\Omega\times\{t\}$ stays convex. Hence $\tilde\xi$ constructed above is as required.
\end{proof}

\begin{rmk}
Let $(\Sigma\times[0,1],\xi)$ be a contact manifold such that $\xi|_{\Sigma_0}=\xi|_{\Sigma_1}$ and $\Sigma\times\{t\}$ is convex for all $t\in[0,1]$. If $\Sigma\neq S^1\times S^1$ and $\xi$ is tight, then it is a standard fact that $\xi$ is isotopic to an $I$-invariant contact structure relative to the boundary. However, if either $\Sigma=S^1\times S^1$ or $\xi$ is overtwisted, then the above fact is not true anymore. We will study this phenomenon in detail in the case when $\Sigma=S^2$ and $\xi$ is overtwisted in Section 6.
\end{rmk}

\section{Isotoping contact structures up to the 2-skeleton}

We are now ready to take the first main step towards the proof of Theorem~\ref{OT}. Since we will isotop contact structures skeleton by skeleton, we start with the following definition.

\begin{defn}
Let $(M,\xi)$ be an overtwisted contact manifold, and $T$ be a triangulation of $M$. The triangulation $T$ is called an {\em overtwisted contact triangulation} if the following conditions hold:
\be
\item{The 1-skeleton is a Legendrian graph.}
\item{Each 2-simplex is convex with Legendrian boundary.}
\item{Each 3-simplex is an overtwisted ball.}
\ee
\end{defn}

\begin{rmk}
The overtwisted contact triangulation defined above is different from the usual {\em contact triangulation} where the 3-simplexes are assumed to be tight.
\end{rmk}

The goal for this section is to prove the following Proposition.

\begin{prop} \label{2skeleton}
Let $M$ be a closed, oriented 3-manifold with a fixed triangulation $T$. Let $\xi$ and $\xi'$ be homotopic overtwisted contact structures on $M$. Then they are isotopic up to the 2-skeleton, i.e., there exists an isotopy $\phi_t:M \to M$, $t \in [0,1]$, $\phi_0=id$ such that $(\phi_1)_*\xi=\xi'$ in a neighborhood of $T^{(2)}$.
\end{prop}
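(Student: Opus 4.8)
The plan is to realize the agreement skeleton by skeleton: I would first turn $T$ into an overtwisted contact triangulation for both structures, and then match the germs of $\xi$ and $\xi'$ successively over $T^{(0)}$, $T^{(1)}$ and $T^{(2)}$. Throughout I would allow myself to isotop both $\xi$ and $\xi'$; composing the two isotopies recovers one of the exact form asserted. So: first isotop $\xi$ and $\xi'$ so that $T$ is an overtwisted contact triangulation for each --- $T^{(1)}$ Legendrian, the $2$-simplices convex, each $3$-simplex an overtwisted ball --- which is standard and may require subdividing $T$ (harmless, since a subdivision only enlarges $T^{(2)}$ as a subset of $M$). Near each vertex both structures are standard by Darboux, so an isotopy supported near $T^{(0)}$ makes them agree there.

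Next I would deal with the $1$-skeleton, and this is where the first subtlety enters. Along a Legendrian edge $e$, with germ fixed at its endpoints, the germ of a contact structure is pinned down by one integer --- the twisting of the plane field relative to the two adjacent $2$-simplices --- which drops by one under a Legendrian stabilization of $e$. But a stabilization of $e$ also modifies the dividing set of each adjacent $2$-simplex near $e$, hence changes the Euler cochain $c_{\xi}\colon\sigma\mapsto\chi(R_{+}^{\sigma})-\chi(R_{-}^{\sigma})$ by a coboundary term, and positive versus negative stabilizations contribute this term with opposite signs. Since $\xi$ and $\xi'$ are homotopic, $e(\xi)=e(\xi')$, so $c_{\xi}-c_{\xi'}=\delta b$ for some $1$-cochain $b$. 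I would choose the stabilizations of the edges so as to simultaneously (i) equalize the relative twisting numbers of $\xi$ and $\xi'$ along every edge and (ii) make $c_{\xi}=c_{\xi'}$ on the nose; then a further isotopy supported near $T^{(1)}$ makes $\xi$ and $\xi'$ agree in a neighborhood of $T^{(1)}$.

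For the $2$-skeleton: with $\xi=\xi'$ near $T^{(1)}$, on each $2$-simplex $\sigma$ the dividing sets $\Gamma^{\xi}_{\sigma}$ and $\Gamma^{\xi'}_{\sigma}$ have the same trace on $\partial\sigma$ and, by (ii), the same relative Euler number. Since each $2$-simplex separates two overtwisted $3$-cells, Lemma~\ref{Abund} supplies a bypass along any admissible arc on $\sigma$ inside either adjacent $3$-cell; attaching one replaces $\sigma$ by an isotopic convex surface carrying the bypass-modified dividing set, and conjugating by the ambient isotopy that realizes this replacement gives an isotopy of $\xi$ changing $\Gamma^{\xi}_{\sigma}$ by a bypass move and touching neither $\partial\sigma$ nor the other faces. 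I would then invoke the combinatorial fact that two dividing sets on a disk with the same boundary trace and relative Euler number are joined by a finite chain of such bypass moves (proved by reducing each to a normal form), apply it to every $2$-simplex with the auxiliary bypasses for distinct simplices pushed into disjoint sub-balls of the $3$-cells, and obtain $\tilde\xi$, isotopic to $\xi$, with $\Gamma^{\tilde\xi}_{\sigma}=\Gamma^{\xi'}_{\sigma}$ for all $\sigma$. Theorem~\ref{Flex} then lets me isotop $\tilde\xi$ near $T^{(2)}$ so that the characteristic foliation on each face matches that of $\xi'$, and Giroux's theorem that the characteristic foliation of a convex surface determines the contact germ gives $\tilde\xi=\xi'$ in a neighborhood of $T^{(2)}$.

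The hard part is the combined step (i)--(ii): both the relative twisting numbers and the Euler cochains are adjusted only through stabilizations of $T^{(1)}$, and one must check the relevant parities are compatible so that a single choice of stabilizations achieves both at once. What makes this possible is exactly the hypothesis $e(\xi)=e(\xi')$, which turns the discrepancy of Euler cochains into a coboundary supported on the edges --- precisely where the stabilizations act. One must also verify the combinatorial connectivity statement for dividing sets on a disk and keep the bypass attachments over the different faces disjointly supported. The finer three-dimensional homotopy information plays no role at this stage; it is what has to be handled afterwards, on the complementary $3$-ball.
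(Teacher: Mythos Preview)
Your proposal is correct and follows essentially the same approach as the paper. The paper resolves your ``hard part'' cleanly: after first matching $\xi$ and $\xi'$ near $T^{(1)}$, it writes the Euler-cochain discrepancy as $2d\theta$ and then, for each edge $\sigma^1$, performs $\theta(\sigma^1)$ \emph{positive} stabilizations on $\xi$ and the same number of \emph{negative} stabilizations on $\xi'$; since both $\pm$-stabilizations lower the twisting by one the match near $T^{(1)}$ is preserved, while their opposite effect on $\chi(R_+)-\chi(R_-)$ kills the discrepancy---so no parity obstruction ever arises.
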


\begin{proof}
Before we go into details of the proof, observe that if $\phi_t:M \to M$, $t \in [0,1]$, $\phi_0=id$ is an isotopy, then $(M,\phi_1(\xi),T)$ and $(M,\xi,\phi_1^{-1}(T))$ carries the same contact information. In fact, we will isotop the skeletons of the triangulation $T$ and think of them as isotopies of contact structures.

By a $C^0$-small perturbation of the 1-skeleton $T^{(1)}$, we can assume that $T^{(1)}$ is a Legendrian graph with respect to $\xi$ and $\xi'$. Performing stabilizations to edges of $T^{(1)}$ if necessary, we can further assume that $\xi=\xi'$ in a neighborhood of $T^{(1)}$. For each 2-simplex $\sigma^2$ in $T^{(2)}$, we can always stabilize the Legendrian unknot $\bdry \sigma^2$ sufficiently many times so that $tb(\bdry \sigma^2)<0$. Therefore a $C^\infty$-small perturbation of $\sigma^2$ relative to $\bdry\sigma^2$ makes it convex with respect to $\xi$ (resp. ${\xi'}$) with dividing set $\Gamma_{\sigma^2}^\xi$ (resp. $\Gamma_{\sigma^2}^{\xi'}$). Both $\Gamma_{\sigma^2}^\xi$ and $\Gamma_{\sigma^2}^{\xi'}$ are proper 1-submanifolds of $\sigma^2$ and generically the endpoints are contained in the interior of the 1-simplexes. See Figure~\ref{DivSet} for an example.

In order to make $T$ an overtwisted contact triangulation for $\xi$ and $\xi'$, we still need to make sure that all 3-simplexes are overtwisted. We do this for $\xi$, and the same argument applies to $\xi'$. Take an overtwisted disc $D$ in $(M,\xi)$. We can assume that $D$ is contained in a 3-simplex $\sigma^3_1$. Let $\sigma^3_2$ be another 3-simplex which shares a 2-face with $\sigma^3_1$, i.e., $\sigma^3_1 \cap \sigma^3_2=\sigma^2$ is a 2-simplex. We claim that by isotoping $\sigma^2$ relative to $\bdry \sigma^2$ if necessary, we can make both $\sigma^3_1$ and $\sigma^3_2$ overtwisted. The fact that $M$ is closed immediately implies that a finite steps of such isotopies will make $T$ an overtwisted contact triangulation. To prove the claim, we first take a parallel copy of the overtwisted disk $D$ in an $I$-invariant neighborhood of $D$, denoted by $D'$. Pick an arc $\gamma$ connecting $D'$ to $\sigma^2$ inside $\sigma^3_1$. Let $\tilde\sigma^2$ be another 2-simplex obtained by isotoping $\sigma^2$ across $D'$ along $\gamma$, i.e., $\tilde\sigma^2$ satisfying the following conditions:

\be
\item{$\bdry\tilde\sigma^2=\bdry\sigma^2$.}
\item{$\sigma^2\cup\tilde\sigma^2$ bounds a neighborhood of $D'\cup\gamma$.}
\item{$\tilde\sigma^2$ is convex.}
\ee

By replacing $\sigma^2$ with $\tilde\sigma^2$, we obtain two new 3-simplexes, each of which contains an overtwisted disk in the interior as claimed.

\begin{figure}[h]
    \begin{overpic}[scale=.2]{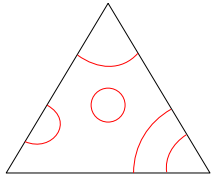}
    \end{overpic}
    \caption{An example of the dividing set on a 2-simplex.}
    \label{DivSet}
\end{figure}

Now by Giroux's flexibility theorem, it suffices to isotop $\xi$ and ${\xi'}$ so that they induce isotopic dividing sets on each 2-simplex relative to $T^{(1)}$. To achieve this goal, we define the difference 2-cocycle $\delta$ by assigning to each oriented 2-simplex $\sigma^2$ an integer $\chi(R_+(\Gamma^{\xi'}_{\sigma^2}))-\chi(R_-(\Gamma^{\xi'}_{\sigma^2}))-\chi(R_+(\Gamma^\xi_{\sigma^2}))+\chi(R_-(\Gamma^\xi_{\sigma^2}))$. Since $\xi$ is homotopic to ${\xi'}$ as 2-plane fields, $[\delta]=e(\xi)-e(\xi')=0\in H^2(M,\mathbb{Z})$. Hence there exists an integral 1-cocycle $\theta$ so that $2d\theta=\delta$ since the Euler class is always even.\footnote{More precisely, if we fix a trivialization of $TM$ and consider the Gauss map associated to the contact distribution, then the Euler class of the contact distribution is exactly twice the Poincar\'e dual of the Pontryagin submanifold of the Gauss map.} One should think of $\theta$ as an element in $Hom(C_1(M),\mathbb{Z})$.

Let $\sigma^2 \in T^{(2)}$ be an oriented convex 2-simplex and $\sigma^1 \subset \bdry\sigma^2$ be an oriented 1-simplex with the induced orientation. We study the effect of stabilizing the 1-simplex $\sigma^1$ to the overtwisted contact triangulation. If we positively stabilize $\sigma^1$ once and isotop $\sigma^2$ accordingly to obtain a new 2-simplex $\tilde\sigma^2$, then the dividing set $\Gamma^{\xi}_{\tilde\sigma^2}$ on $\tilde\sigma^2$ is obtained from $\Gamma^{\xi}_{\sigma^2}$ by adding a properly embedded arc contained in the negative region with both endpoints on the interior of $\sigma^1$ as depicted in Figure~\ref{PosStab}. Similarly, if we negatively stabilize $\sigma^1$ once and isotop $\sigma^2$ accordingly as before, then the dividing set on the isotoped $\sigma^2$ is obtained from $\Gamma^{\xi}_{\sigma^2}$ by adding a properly embedded arc contained in the positive region and with both endpoints on the interior of $\sigma^1$.

\begin{figure}[h]
    \begin{overpic}[scale=.3]{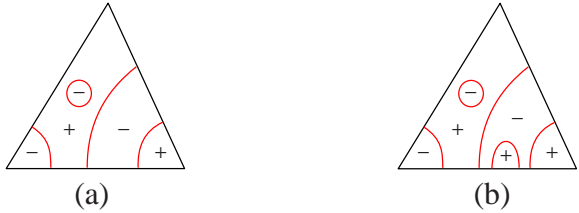}
    \put(3.5,2){\tiny{$-$}}
    \put(10,6){\tiny{$+$}}
    \put(19.5,6){\tiny{$-$}}
    \put(26,2){\tiny{$+$}}
    \put(11.7,12.5){\tiny{$-$}}
    \put(72,2){\tiny{$-$}}
    \put(78,6){\tiny{$+$}}
    \put(88.5,8){\tiny{$-$}}
    \put(86.5,1.5){\tiny{$+$}}
    \put(94.6,2){\tiny{$+$}}
    \put(80.3,12.5){\tiny{$-$}}
    \put(12,-6){(a)}
    \put(82,-6){(b)}
    \end{overpic}
    \newline
    \caption{(a) The dividing set on $\sigma^2$ divides it into $\pm$-regions. The bottom edge is $\sigma^1$. (b) One possible dividing set on $\tilde\sigma^2$ after positively stabilizing $\sigma^1$ once.}
    \label{PosStab}
\end{figure}

Note that in general, the new overtwisted contact triangulation obtained by $\pm$-stabilizing a 1-simplex $\sigma^1$ is not unique. In fact, different choices may give non-isotopic dividing sets on the isotoped $\sigma^2$ in the new triangulation. However, for our purpose, we only care about the quantity $\chi(R_+)-\chi(R_-)$ on each 2-simplex and it is easy to see that different choices give the same value to this quantity. Thus we will ignore this ambiguity by arbitrarily choosing an isotopy of the 2-simplex.

We denote the overtwisted contact triangulation obtained by $\pm$-stabilizing $\sigma^1$ once in $(M,\xi)$ by $S^{\pm}_{\sigma^1}(\xi)$. As remarked at the beginning of the proof, one should think of $S^{\pm}_{\sigma^1}(\xi)$ as isotopies of $\xi$. It is easy to see that $S^{\pm}_{\sigma^1}(\xi)$ changes $\chi(R_+(\Gamma^\xi_{\sigma^2}))-\chi(R_-(\Gamma^\xi_{\sigma^2}))$ by $\pm 1$ for any 2-simplex $\sigma^2 \in T^{(2)}$ so that $\sigma^1 \subset \bdry\sigma^2$ as an oriented boundary edge. The same holds for $\xi'$ as well.

Now we argue that one can isotop $\xi$ and ${\xi'}$ so that $\chi(R_+(\Gamma^\xi_{\sigma^2}))-\chi(R_-(\Gamma^\xi_{\sigma^2}))=\chi(R_+(\Gamma^{\xi'}_{\sigma^2}))-\chi(R_-(\Gamma^{\xi'}_{\sigma^2}))$ on each 2-simplex $\sigma^2$. This can be done as follows. For each oriented 1-simplex $\sigma^1 \in T^{(1)}$, the 1-cocycle $\theta$ sends it to an integer $n=\theta(\sigma^1)$. We perform $n$ times the isotopy $S^+_{\sigma^1}(\xi)$ to $\xi$ and $n$ times the isotopy $S^-_{\sigma^1}({\xi'})$ to ${\xi'}$ at the same time. If we perform such operation to every 1-simplex in $T$, it is easy to see that the following properties are satisfied:

\be

\item{$\xi={\xi'}$ in a neighborhood of $T^{(1)}$.}

\item{$\chi(R_+(\Gamma^\xi_{\sigma^2}))-\chi(R_-(\Gamma^\xi_{\sigma^2}))=\chi(R_+(\Gamma^{\xi'}_{\sigma^2}))-\chi(R_-(\Gamma^{\xi'}_{\sigma^2}))$, $\forall \sigma^2 \in T^{(2)}$.}

\ee

The second property implies that $\Gamma^{\xi'}_{\sigma^2}$ can be obtained from $\Gamma^\xi_{\sigma^2}$ by attaching a sequence of bypasses for each 2-simplex $\sigma^2$. Recall that $T$ is an overtwisted contact triangulation and in particular each 3-simplex is an overtwisted ball. Hence bypasses exist along any admissible arc in $\sigma^2$ inside any 3-simplex with $\sigma^2$ as a 2-face by Lemma~\ref{Abund}. Therefore by isotoping 2-simplexes through bypasses, we can assume that $\xi$ and $\xi'$ induce isotopic dividing sets on each 2-simplex relative to its boundary. The conclusion now follows immediately from Giroux's flexibility theorem.
\end{proof}

\section{Bypass triangle attachments}

In this section we study the effect of attaching a bypass triangle to the contact structure, in particular, we give an alternative definition of the bypass triangle attachment. We start with the definition of the bypass triangle attachment.\\

\begin{notation}
{\em Notation:} Let $\Sigma$ be a convex surface and $\alpha \subset \Sigma$ be an admissible arc. We denote the bypass attachment along $\alpha$ on $\Sigma$ by $\sigma_\alpha$. Let $\beta$ be another admissible arc on the convex surface obtained by attaching the bypass along $\alpha$ on $\Sigma$. We denote the composition of bypass attachments by $\sigma_\alpha\ast\sigma_\beta$, where the composition rule is to attach the bypass along $\alpha$ first, then attach the bypass along $\beta$ in the same direction. If $(M,\xi)$ is a contact manifold with convex boundary, then $\xi\ast\sigma_\alpha$ denotes the contact structure obtained by attaching a bypass along $\alpha$ to $(M,\xi)$.
\end{notation}

\begin{rmk}
In general, bypass attachments are not commutative unless the attaching arcs are disjoint.
\end{rmk}

\begin{defn}
Let $\Sigma$ be a convex surface and $\alpha \subset \Sigma$ be an admissible arc. A {\em bypass triangle attachment} along $\alpha$ is the composition of three bypass attachments along admissible arcs $\alpha$, $\alpha'$ and $\alpha''$ in a neighborhood of $\alpha$ as depicted in Figure~\ref{BypassTri}. We denote the bypass triangle attachment along $\alpha$ by $\triangle_\alpha=\sigma_\alpha \ast \sigma_{\alpha'} \ast \sigma_{\alpha''}$.
\end{defn}

\begin{rmk} \label{anibypassarc}
The second admissible arc $\alpha'$ in the bypass bypass triangle is also known as the {\em arc of anti-bypass attachment} to $\sigma_\alpha$.
\end{rmk}

\begin{figure}[h]
  \begin{overpic}[scale=.32]{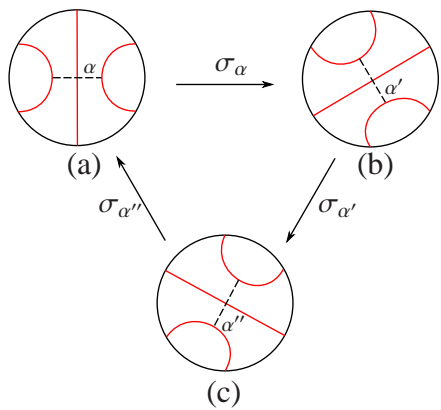}
  \put(13.5,46){(a)}
  \put(81,46){(b)}
  \put(46,-7){(c)}
  \put(17.5,69.5){\tiny{$\alpha$}}
  \put(87.5,64.5){\tiny{$\alpha'$}}
  \put(49.5,9.5){\tiny{$\alpha''$}}
  \put(48,70){\small{$\sigma_\alpha$}}
  \put(72,37){\small{$\sigma_{\alpha'}$}}
  \put(21,37){\small{$\sigma_{\alpha''}$}}
  \end{overpic}
  \newline
  \caption{(a) A neighborhood of $\alpha$ on $\Sigma$, along which the first bypass $\sigma_\alpha$ is attached. (b) The second bypass $\sigma_{\alpha'}$ is attached along the dotted arc $\alpha'$. (c) The third bypass $\sigma_{\alpha''}$ is attached along the dotted arc $\alpha''$ and finishes the bypass triangle.}
  \label{BypassTri}
\end{figure}

\begin{notation}
\textit{Warning}: When we define a bypass attachment $\sigma_\alpha$ along $\alpha$ on $(\Sigma,\Gamma_\Sigma)$, there are several choices involved. Namely, we need to choose a multicurve, i.e., a 1-submanifold of $\Sigma$, representing the isotopy class of $\Gamma_\Sigma$, an admissible arc representing the isotopy class of $\alpha$, a neighborhood of $\alpha$ where $\sigma_\alpha$ is supported. Since the space of choices of $\alpha$ and its neighborhood is contractible according to Theorem~\ref{Flex}, we can neglect this ambiguity. However the space of choices of multicurves representing $\Gamma_\Sigma$ is not necessarily contractible. This point will be made clear in the next section. For the rest of this paper, $\Gamma_\Sigma$ always means a multicurve on $\Sigma$ rather than its isotopy class. \\
\end{notation}

\begin{rmk} \label{uniquetight}
If $\Sigma=S^2$ and $\Gamma_\Sigma=S^1$, then the space of choices of multicurve is simply-connected since there is a unique tight contact structure in a neighborhood of $S^2$ up to isotopy.
\end{rmk}

Observe that, up to an isotopy supported in a neighborhood of the admissible arc $\alpha$, the bypass triangle attachment does not change $\Gamma_\Sigma$.

In what follows we look at bypass triangle attachments along different admissible arcs, which leads to our alternative definition of the bypass triangle attachment.

\begin{lemma} \label{lemBTonS2}
Let $\xi_\alpha$ and $\xi_\beta$ be two (overtwisted) contact structures on $S^2\times[0,1]$, where $\alpha$ and $\beta$ are admissible arcs on $S^2\times\{0\}$, such that

\be
\item{$S^2\times\{0,1\}$ is convex with respect to both $\xi_\alpha$ and $\xi_\beta$.}
\item{$\xi_\alpha=\xi_\beta$ in a neighborhood of $S^2\times\{0\}$ and $\#\Gamma^{\xi_\alpha}_{S^2\times\{0\}}=\#\Gamma^{\xi_\beta}_{S^2\times\{0\}}=1$.}
\item{$\xi_\alpha$ is obtained by attaching a bypass triangle $\triangle_\alpha$ to $\xi_\alpha|_{S^2\times\{0\}}$, and $\xi_\beta$ is obtained by attaching a bypass triangle $\triangle_\beta$ to $\xi_\beta|_{S^2\times\{0\}}$.}
\ee

Then $\xi_\alpha$ is isotopic to $\xi_\beta$ relative to the boundary.
\end{lemma}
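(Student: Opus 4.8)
The plan is to show that the bypass triangle attachment, viewed as a contact structure on $S^2\times[0,1]$ with standard convex boundary, does not depend (up to isotopy relative to the boundary) on which admissible arc $\alpha$ on $S^2\times\{0\}$ one starts with. The key observation is that since $\#\Gamma^{\xi}_{S^2\times\{0\}}=1$, the dividing set on $S^2\times\{0\}$ is a single circle, and \emph{any} two admissible arcs $\alpha$ and $\beta$ on $(S^2, S^1)$ are related by a sequence of elementary moves: an isotopy of the arc, or a "finger move" sliding one foot of the arc along the dividing set past the other foot. So the strategy is: (1) reduce to the case where $\alpha$ and $\beta$ differ by a single such elementary move; (2) analyze the effect of an elementary move on the resulting contact structure obtained by attaching the bypass triangle.

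First I would fix the standard picture: on $(S^2,\Gamma=S^1)$ an admissible arc $\alpha$ intersects $\Gamma$ in three points and its complement in $\Gamma$ is three arcs; up to isotopy of $S^2$ preserving $\Gamma$, all admissible arcs are equivalent, so for \emph{isotopic} $\alpha$ and $\beta$ the statement is immediate from Theorem~\ref{Flex}. The content is therefore entirely in the "multicurve" ambiguity flagged in the Warning: after the first bypass attachment $\sigma_\alpha$ the dividing set changes, and the second arc $\alpha'$ (the anti-bypass arc of Remark~\ref{anibypassarc}) must be chosen in the \emph{new} surface, and likewise $\alpha''$ in the surface after $\sigma_\alpha\ast\sigma_{\alpha'}$. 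By Remark~\ref{uniquetight} the relevant intermediate dividing sets on $S^2$ — which have $\#\Gamma$ equal to $1$ or $2$ after the moves of Figure~\ref{BypassTri} — have simply-connected spaces of representing multicurves when $\#\Gamma=1$, and I would need to check the $\#\Gamma=2$ case does not introduce genuinely new choices that affect the isotopy class. The crux is that the whole bypass triangle, by the observation just before the lemma, returns $\Gamma_\Sigma$ to its original isotopy class, so the net effect is a self-map of the "space of choices" and one wants to see it is trivial.

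The cleanest route, I think, is to use the alternative description the section is building toward: realize the bypass triangle attachment as a fixed local model (a specific overtwisted contact structure on $D^2\times[0,1]$, or on a ball) glued in along a neighborhood of $\alpha$. Concretely, I would (a) show that $\triangle_\alpha$, as a contact germ, is supported in an arbitrarily small neighborhood $N(\alpha)\times[0,1]$ of $\alpha$ — this follows because each of $\sigma_\alpha,\sigma_{\alpha'},\sigma_{\alpha''}$ is supported near $\alpha$, per the definition — with the contact structure $I$-invariant outside; (b) observe that this local model is \emph{intrinsically defined}: it is the contact structure on (nbhd of arc)$\times[0,1]$ with prescribed convex boundary and prescribed dividing-set data, which is rigid because a neighborhood of an admissible arc in a convex surface is itself a "standard" piece (half-disk with a standard dividing set) whose convex filling by the triangle is determined; (c) conclude that for any two arcs $\alpha,\beta$ we may choose an ambient isotopy of $S^2\times\{0\}$ carrying a neighborhood of $\alpha$ to a neighborhood of $\beta$ (both being standard), extend it to $S^2\times[0,1]$, and transport $\xi_\alpha$ to a contact structure that agrees with $\xi_\beta$ near the glued-in local model and is $I$-invariant elsewhere; then a final application of uniqueness of the tight (here $I$-invariant, after deleting the triangle's overtwisted part) contact structure on the complementary $S^2\times[0,1]$-with-sphere-boundary piece, à la Remark~\ref{uniquetight}, finishes the job.

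The main obstacle I expect is controlling the \emph{intermediate} choices of multicurve — i.e., making precise that choosing the anti-bypass arc $\alpha'$ in the surface after $\sigma_\alpha$, and $\alpha''$ after $\sigma_\alpha\ast\sigma_{\alpha'}$, does not secretly depend on non-contractible loops in the space of multicurves, since those intermediate dividing sets need not have $\#\Gamma=1$. I would handle this by keeping the entire construction inside the fixed small neighborhood $N(\alpha)$: there the dividing set is always a standard arc pattern, the space of admissible arcs realizing each stage is contractible by Theorem~\ref{Flex}, and the only circle-components of dividing sets that appear bound disks in $N(\alpha)$ and carry the unique tight germ. The other delicate point is the gluing/uniqueness step at the end — one must separate the overtwisted part genuinely introduced by the triangle (which lives in the local model and is identical in both cases by construction) from the surrounding region, and invoke Eliashberg's uniqueness of tight contact structures on $S^2\times I$ with connected convex boundary only on that surrounding region; getting the decomposition clean, so that what remains on each side is tight and $I$-invariant, is where the real care is needed.
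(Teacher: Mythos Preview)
Your approach has a genuine gap at step (c). You claim that for any two admissible arcs $\alpha, \beta$ on $(S^2, \Gamma = S^1)$ there is an ambient isotopy of $S^2 \times \{0\}$ carrying a neighborhood of $\alpha$ to a neighborhood of $\beta$; but this isotopy must preserve the contact germ along $S^2 \times \{0\}$ (otherwise the transported structure violates the boundary condition ``relative to the boundary''), hence in particular preserve $\Gamma$ together with its coorientation. There are exactly \emph{two} isotopy classes of admissible arcs on $(S^2, S^1)$ --- the one whose bypass is trivial and the one whose bypass is overtwisted --- and no isotopy of $(S^2,\Gamma)$ carries one to the other. So the transport step is vacuous precisely in the one case that has content. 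The ``local model'' viewpoint you sketch is in fact what the paper sets up \emph{after} this lemma (the cut-and-paste description via the minimal overtwisted ball), and that description relies on the present lemma to be well-defined; invoking it here would be circular.

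The paper's argument is shorter and sidesteps all of this. Since there are only two admissible arcs up to isotopy, take $\sigma_\alpha$ to be the trivial bypass and $\sigma_\beta$ the overtwisted one. One then reads off directly from the bypass-triangle picture (Figure~\ref{BypassTri}, specialized to $\#\Gamma=1$) that the second and third arcs of $\triangle_\alpha$ coincide with the first and second arcs of $\triangle_\beta$: $\alpha' \simeq \beta$ and $\alpha'' \simeq \beta'$. Moreover $\sigma_\alpha$ and $\sigma_{\beta''}$ are both trivial bypasses, so Lemma~\ref{Triviality} lets you delete them:
\[
\triangle_\alpha = \sigma_\alpha \ast \sigma_{\alpha'} \ast \sigma_{\alpha''} \;\simeq\; \sigma_{\alpha'} \ast \sigma_{\alpha''} \;\simeq\; \sigma_{\beta} \ast \sigma_{\beta'} \;\simeq\; \sigma_\beta \ast \sigma_{\beta'} \ast \sigma_{\beta''} = \triangle_\beta.
\]
The whole proof is this one-line ``cyclic shift'' of the triangle; the multicurve and gluing issues you flag never arise.
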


\begin{proof}
Up to isotopy, there are only two different admissible arcs on $(S^2\times\{0\},\xi_\alpha|_{S^2\times\{0\}})$ (or, $(S^2\times\{0\},\xi_\beta|_{S^2\times\{0\}})$). Namely, one gives the trivial bypass and the other gives the overtwisted bypass. We may assume without loss of generality that $\alpha$ is not isotopic to $\beta$, and $\sigma_\alpha$ is the trivial bypass and $\sigma_\beta$ is the overtwisted bypass. We complete the bypass triangles $\triangle_\alpha$ and $\triangle_\beta$ as depicted in Figure~\ref{BTonS2}.

\begin{figure}[h]
    \begin{overpic}[scale=.3]{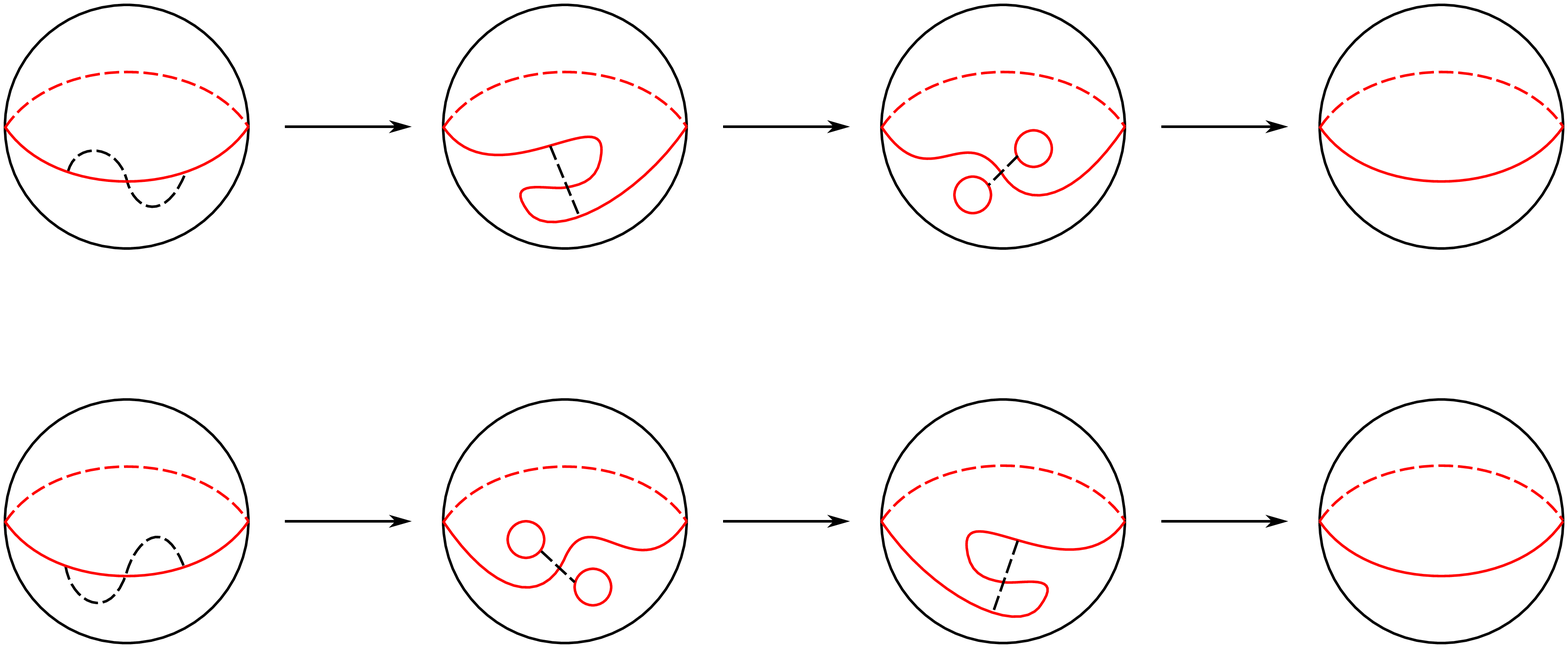}
    \put(7,32){\tiny{$\alpha$}}
    \put(32.8,29.8){\tiny{$\alpha'$}}
    \put(62.8,31.8){\tiny{$\alpha''$}}
    \put(11.5,7){\tiny{$\beta$}}
    \put(35.2,6.4){\tiny{$\beta'$}}
    \put(65,4.2){\tiny{$\beta''$}}
    \put(21,34.5){\tiny{$\sigma_\alpha$}}
    \put(48.5,34.5){\tiny{$\sigma_{\alpha'}$}}
    \put(76.5,34.5){\tiny{$\sigma_{\alpha''}$}}
    \put(21,9.5){\tiny{$\sigma_\beta$}}
    \put(48.5,9.5){\tiny{$\sigma_{\beta'}$}}
    \put(76.5,9.5){\tiny{$\sigma_{\beta''}$}}
    \end{overpic}
    \newline
    \caption{}
    \label{BTonS2}
\end{figure}

Observe that $\alpha'$ is isotopic to $\beta$, $\alpha''$ is isotopic to $\beta'$ and bypass attachments along $\alpha$ and $\beta''$ are trivial according to Lemma~\ref{Triviality}, we have the following isotopies:
\begin{align*}
\triangle_\alpha &=\sigma_\alpha \ast \sigma_{\alpha'} \ast \sigma_{\alpha''} \\
                 &\simeq \sigma_{\alpha'} \ast \sigma_{\alpha''} \\
                 &\simeq \sigma_{\beta} \ast \sigma_{\beta'} \\
                 &\simeq \sigma_\beta \ast \sigma_{\beta'} \ast \sigma_{\beta''}=\triangle_\beta.
\end{align*}
Since $S^2\times\{0,1\}$ are convex, we can make sure that the isotopies above are supported in the interior of $S^2\times[0,1]$.
\end{proof}

\begin{defn}
A {\em minimal overtwisted ball} $(B^3,\xi_{ot})$ is an overtwisted ball where $\bdry B^3$ has a tight neighborhood, and the contact structure $\xi_{ot}$ is obtained by attaching a bypass triangle to the standard tight ball $(B^3,\xi_{std})$.
\end{defn}

\begin{rmk}
By Lemma~\ref{lemBTonS2}, the minimal overtwisted ball is well-defined even if we do not specify the admissible arc along which the bypass triangle is attached.
\end{rmk}

With the above preparation, we can now redefine the bypass triangle attachment which is more convenient for our purpose. Let $(M,\xi)$ be a contact 3-manifold with convex boundary $\bdry M=\Sigma$. Identify a collar neighborhood of $\bdry M$ with $\Sigma\times[-1,0]$ such that $\bdry M=\Sigma\times\{0\}$ and the contact vector field transverse to $\bdry M$ is identified with the $[-1,0]$-direction. Let $\alpha\subset\bdry M$ be an admissible arc along which the bypass triangle is attached. Push $\alpha$ into the interior of $M$ to obtain another admissible arc, parallel to $\alpha$, contained in $\Sigma\times\{-1/2\}$, which we still denote by $\alpha$. Let $N$ be a neighborhood of $\alpha$ in $\Sigma\times\{-1/2\}$. Consider the ball with corners $N\times[-2/3,-1/3]\subset M$. By rounding the corners, we get a smoothly embedded tight ball $(B^3_1,\xi|_{B^3_1})\subset(M,\xi)$, in particular, $\bdry B^3_1$ has a tight neighborhood in $(M,\xi)$. Let $(B^3_2,\xi_{ot})$ be a minimal overtwisted ball. We construct a new contact manifold $(M,\tilde\xi)=(M\setminus B^3_1,\xi)\cup_\phi(B^3_2,\xi_{ot})$, where $\phi$ is an orientation-reversing diffeomorphism identifying the standard tight neighborhoods of $\bdry B^3_1$ and $\bdry B^3_2$. It is easy to see that $\tilde\xi$ is isotopic to the contact structure obtained by attaching a bypass triangle to $(M,\xi)$ along $\alpha$.

\begin{rmk}
The uniqueness of the tight contact structure on 3-ball, due to Eliashberg, guarantees that the bypass triangle attachment described above is well-defined.
\end{rmk}

Using the above alternative description of the bypass triangle attachment, we prove the following generalization of Lemma~\ref{lemBTonS2}.

\begin{lemma} \label{LemBT}
Let $(M,\xi)$ be a contact 3-manifold with convex boundary, and let $\alpha,\beta$ be two admissible arcs on $\bdry M$. Let $\xi_\alpha$ (resp. $\xi_\beta$) be the contact structure on $M$ obtained by attaching a bypass triangle $\triangle_\alpha$ (resp. $\triangle_\beta$) along $\alpha$ (resp. $\beta$) to $(M,\xi)$. Then $\xi_\alpha$ is isotopic to $\xi_\beta$ relative to the boundary.
\end{lemma}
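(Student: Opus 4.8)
The plan is to leverage the alternative description of the bypass triangle attachment recorded just above: attaching a bypass triangle along $\alpha$ produces $\xi_\alpha\cong(M\setminus B^3_\alpha,\xi)\cup_\phi(B^3_2,\xi_{ot})$, where $B^3_\alpha$ is a small standard tight ball, together with a standard tight collar of $\partial B^3_\alpha$, obtained near a push-off of $\alpha$ into $\mathrm{int}\,M$, where $(B^3_2,\xi_{ot})$ is a minimal overtwisted ball, and where $\phi$ matches the standard collars; likewise $\xi_\beta\cong(M\setminus B^3_\beta,\xi)\cup_{\phi'}(B^3_2,\xi_{ot})$ for a small standard tight ball $B^3_\beta$ near a push-off of $\beta$. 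As already observed, this surgery is well defined up to isotopy rel boundary, independently of the choices of $B^3_2$, of the gluing map, and of the standardized collar --- this is where Lemma~\ref{lemBTonS2} and Eliashberg's uniqueness of tight contact structures on $B^3$ enter. Hence it suffices to produce a contactomorphism of $(M,\xi)$, isotopic to the identity rel $\partial M$, carrying $B^3_\alpha$ with its collar onto $B^3_\beta$ with its collar: pushing $\xi_\alpha$ forward by such a map exhibits it as a minimal-overtwisted-ball surgery of $(M,\xi)$ along $B^3_\beta$, hence as $\xi_\beta$ up to isotopy rel boundary, and one then concatenates with the isotopy of the chosen contactomorphism to the identity.

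It remains to \emph{transport} the tight ball. We may assume $M$ is connected. By Eliashberg's uniqueness of tight contact structures on $B^3$ and the fact that $\partial B^3_\alpha$ has a standard tight collar, $(B^3_\alpha,\xi|_{B^3_\alpha})$ with that collar is the image of a contact embedding $\iota_\alpha$ of a fixed small Darboux ball into $\mathrm{int}\,M$, and similarly $B^3_\beta=\mathrm{Im}\,\iota_\beta$. The space of such contact embeddings into the connected contact manifold $(M,\xi)$ is connected: one first slides the centre of the ball along an embedded path in $\mathrm{int}\,M$ joining the two centres, and then connects the two resulting germs of contact embeddings at a common interior point, the group of germs at a point of contactomorphisms fixing that point being connected. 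By the contact isotopy extension theorem, a path of contact embeddings from $\iota_\alpha$ to $\iota_\beta$ extends to a contact isotopy $\psi_t$ of $(M,\xi)$ with $\psi_0=\mathrm{id}$, compactly supported in $\mathrm{int}\,M$, and $\psi_1\circ\iota_\alpha=\iota_\beta$; in particular $\psi_t|_{\partial M}=\mathrm{id}$ for all $t$, and $\psi_1$ carries $B^3_\alpha$ with its standard collar onto $B^3_\beta$ with its standard collar.

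Then $(\psi_1)_*\xi_\alpha$ is, by construction, obtained from $(M,\xi)$ by removing $B^3_\beta$ and regluing $(B^3_2,\xi_{ot})$ along the $\psi_1$-image of the collar, so by the well-definedness of the construction it is isotopic rel boundary to $\xi_\beta$. Since $\psi_t$ fixes $\partial M$ for all $t$, $(\psi_1)_*\xi_\alpha$ is isotopic to $\xi_\alpha$ rel boundary, and therefore $\xi_\alpha$ is isotopic to $\xi_\beta$ rel boundary, as claimed. I expect the point requiring the most care to be the transport step of the second paragraph, together with the bookkeeping that keeps the transporting isotopy supported away from $\partial M$ and faithfully matches the standardized collars, so that the two surgery gluings genuinely agree; all of this is arranged by keeping the tight balls, their collars, and the connecting path inside $\mathrm{int}\,M$ throughout.
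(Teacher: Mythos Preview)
Your argument is correct, but it is organized differently from the paper's. The paper first reduces to the case where $\alpha$ and $\beta$ are disjoint (introducing a third arc $\gamma$ if needed), then, rather than transporting one tight ball to the other by an ambient contact isotopy, it \emph{encapsulates} both: it connects $B^3_\alpha$ and $B^3_\beta$ by a Legendrian arc $\tau$ in $\mathrm{int}\,M$, rounds the corners of $B^3_\alpha\cup N(\tau)\cup B^3_\beta$ to obtain a single smoothly embedded ball $B^3$ with tight convex boundary, and then observes that $(B^3,\xi_\alpha|_{B^3})$ and $(B^3,\xi_\beta|_{B^3})$ are the contact boundary sums $(B^3,\xi_{ot})\#_b(B^3,\xi_{std})$ and $(B^3,\xi_{std})\#_b(B^3,\xi_{ot})$, both of which are minimal overtwisted balls; the isotopy is then extended by the identity outside $B^3$. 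Your approach is arguably cleaner and more conceptual: once one accepts the cut-and-paste description and the path-connectedness of Darboux embeddings into $\mathrm{int}\,M$ (together with contact isotopy extension with support away from $\partial M$), the lemma follows immediately, and one never needs to arrange $\alpha\cap\beta=\emptyset$ or build the Legendrian tube. The paper's approach, on the other hand, stays entirely within the hands-on convex/Legendrian framework already set up and avoids invoking the connectedness of the contactomorphism germ at a point, at the modest cost of the auxiliary arc $\tau$ and the boundary-sum identification.
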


\begin{proof}
Without loss of generality, we can assume that $\alpha$ and $\beta$ are disjoint. If not, we take another admissible arc $\gamma$ which is disjoint from $\alpha$ and $\beta$. We then show that $\xi_\alpha \simeq \xi_\gamma$ and $\xi_\beta \simeq \xi_\gamma$, which implies $\xi_\alpha \simeq \xi_\beta$.

As before, since $\bdry M$ is convex, we can push $\alpha$ and $\beta$ slightly into the manifold $M$, which we still denote by $\alpha$ and $\beta$. Now let $B^3_\alpha\subset M$ and $B^3_\beta\subset M$ be smoothly embedded tight balls containing $\alpha$ and $\beta$ respectively. Take a Legendrian arc $\tau$ connecting $B^3_\alpha$ and $B^3_\beta$, i.e., the endpoints of $\tau$ are contained in $\bdry B^3_\alpha$ and $\bdry B^3_\beta$, respectively, and the interior of $\tau$ is disjoint from $B^3_\alpha$ and $B^3_\beta$. Moreover, we can assume that $\tau\cap\bdry B^3_\alpha \in \Gamma_{\bdry B^3_\alpha}$ and $\tau\cap\bdry B^3_\beta \in \Gamma_{\bdry B^3_\beta}$. Let $N(\tau)$ be a closed tubular neighborhood of $\tau$. By rounding the corners of $B^3_\alpha \cup B^3_\beta \cup N(\tau)$, we get a smoothly embedded ball $B^3\subset M$ with tight convex boundary. Using our cut-and-paste definition of the bypass triangle attachment, it is easy to see that $(B^3,\xi_\alpha|_{B^3})$ and $(B^3,\xi_\beta|_{B^3})$ are isotopic, relative to the boundary, to the contact boundary sums $(B^3,\xi_{ot})\#_b(B^3,\xi_{std})$ and $(B^3,\xi_{std})\#_b(B^3,\xi_{ot})$, respectively. Hence both are isotopic to the minimal overtwisted ball. One simply extends the isotopy by identity to the rest of $M$ to conclude that $\xi_\alpha\simeq\xi_\beta$ on $M$.
\end{proof}

According to Lemma~\ref{LemBT}, the isotopy class of the contact structure obtained by attaching a bypass triangle does not depend on the choice of the attaching arcs. We shall write $\triangle$ for a bypass triangle attachment along an arbitrary admissible arc. An immediate consequence of this fact is that the bypass triangle attachment commutes with any bypass attachment. This is the content of the following corollary:

\begin{cor} \label{Commutativity}
Let $(M,\xi)$ be contact 3-manifold with convex boundary, and $\alpha$ be an admissible arc on $\bdry M$. Then $\xi\ast\sigma_\alpha \ast \triangle \simeq \xi\ast\triangle \ast \sigma_\alpha$.
\end{cor}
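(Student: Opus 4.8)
The plan is to deduce Corollary~\ref{Commutativity} from Lemma~\ref{LemBT} by a suitable choice of attaching arcs. The key observation is that the bypass attachment $\sigma_\alpha$ is supported in a small neighborhood $N(\alpha)$ of the arc $\alpha$ on $\bdry M$, while a bypass triangle attachment $\triangle$ may be performed along \emph{any} admissible arc we like, by Lemma~\ref{LemBT}. So I would first attach the bypass $\sigma_\alpha$ to $(M,\xi)$, obtaining $(M,\xi\ast\sigma_\alpha)$ with new convex boundary $\bdry M$ carrying the modified dividing set; the change from $\Gamma_{\bdry M}$ took place only inside $N(\alpha)$. Then choose an admissible arc $\alpha^\dagger$ on this new boundary which is disjoint from $N(\alpha)$ — such an arc exists since $\bdry M$ has more room than the small disk $N(\alpha)$, and in any case the dividing set outside $N(\alpha)$ is unchanged, so an admissible arc for the original $\Gamma_{\bdry M}$ away from $N(\alpha)$ is still admissible. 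Attaching the bypass triangle $\triangle$ along $\alpha^\dagger$ then gives $\xi\ast\sigma_\alpha\ast\triangle$.

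Next I would run the two local models in the opposite order. Since $\triangle$ is attached along $\alpha^\dagger$ and $\sigma_\alpha$ is attached along $\alpha$, and these two arcs are disjoint with disjoint supporting neighborhoods, the two operations are performed in disjoint collar regions $N(\alpha^\dagger)\times[\ldots]$ and $N(\alpha)\times[\ldots]$ of $\bdry M\times[0,1]$. By the Remark following Definition~\ref{bypass} (isotopic admissible arcs are not distinguished) together with the cut-and-paste description of the bypass triangle attachment — in which $\triangle$ along $\alpha^\dagger$ amounts to excising a small tight ball near $\alpha^\dagger$ and gluing in a minimal overtwisted ball — one sees that $\xi\ast\sigma_\alpha\ast\triangle$ and $\xi\ast\triangle\ast\sigma_\alpha$ are literally constructed from the same pieces glued in disjoint regions, hence are isotopic relative to the boundary. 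Concretely: perform the bypass triangle in the collar near $\alpha^\dagger$ first, producing $\xi\ast\triangle$; the dividing set near $\alpha$ is untouched by this, so $\alpha$ is still an admissible arc for $\xi\ast\triangle$ and $\sigma_\alpha$ may be attached there, yielding $\xi\ast\triangle\ast\sigma_\alpha$; reordering the two disjoint local moves is a standard isotopy supported in $\bdry M\times(0,1)$.

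Finally, I would invoke Lemma~\ref{LemBT} to remove the dependence on the particular arc $\alpha^\dagger$: the contact structure obtained by attaching a bypass triangle along $\alpha^\dagger$ to $(M,\xi\ast\sigma_\alpha)$ is isotopic rel boundary to the one obtained by attaching $\triangle$ along \emph{any} admissible arc, in particular along the original $\alpha$ (or any standard choice used in the statement $\xi\ast\sigma_\alpha\ast\triangle$). The same remark applies on the other side. Stringing these isotopies together gives $\xi\ast\sigma_\alpha\ast\triangle\simeq\xi\ast\triangle\ast\sigma_\alpha$ rel $\bdry M$.

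The main obstacle is the very first step: verifying that one can always find an admissible arc for the bypass triangle that is disjoint from the neighborhood $N(\alpha)$ supporting $\sigma_\alpha$. For a general convex boundary surface this is clear because $N(\alpha)$ is a small disk, but one must make sure that the chosen arc is genuinely admissible, i.e.\ meets the (possibly modified) dividing set in exactly three points with the two endpoints on it; since Lemma~\ref{LemBT} tells us the resulting contact structure is independent of this choice, any admissible arc will do, so the only real content is an existence statement, which follows from the fact that $\Gamma_{\bdry M}$ restricted to $\bdry M\setminus N(\alpha)$ is nonempty (it must be, since a dividing set is never contained in a disk). Once that is in place, everything else is the routine "disjointly supported local moves commute" argument, combined with the arc-independence already proved.
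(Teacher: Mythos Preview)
Your proposal is correct and follows essentially the same approach as the paper: choose an admissible arc $\beta$ (your $\alpha^\dagger$) for the bypass triangle disjoint from $\alpha$, use disjointness of supports to commute $\sigma_\alpha$ and $\triangle_\beta$, and invoke Lemma~\ref{LemBT} on both ends to remove the dependence on the particular arc. The paper's version is more terse---it simply writes the three-line chain $\xi\ast\sigma_\alpha\ast\triangle \simeq \xi\ast\sigma_\alpha\ast\triangle_\beta \simeq \xi\ast\triangle_\beta\ast\sigma_\alpha \simeq \xi\ast\triangle\ast\sigma_\alpha$---and does not pause to justify the existence of a disjoint admissible arc, which you do; but the logic is identical.
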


\begin{proof}
By Lemma~\ref{LemBT}, we can arbitrarily choose an admissible arc $\beta\subset\bdry M$ along which the bypass triangle $\triangle$ is attached. In particular, we require that $\beta$ is disjoint from $\alpha$. Hence a neighborhood of $\beta$ where $\triangle_\beta$ is supported in is also disjoint from $\alpha$. Thus we have the following isotopies:
\begin{align*}
\xi\ast\sigma_\alpha\ast\triangle &\simeq \xi\ast\sigma_\alpha\ast\triangle_\beta \\
                                      &\simeq\xi\ast \triangle_\beta\ast\sigma_\alpha \\
                                      &\simeq\xi\ast \triangle\ast\sigma_\alpha.
\end{align*}
which proves the commutativity.
\end{proof}

\begin{cor}
Let $(S^2\times[0,1],\xi)$ be a contact manifold with convex boundary, where $\xi$ is isotopic to a sequence of bypass attachments $\sigma_1\ast\sigma_2\ast\cdots\ast\sigma_n$, i.e., there exists $0=t_0<t_1<\cdots<t_n=1$ such that $S^2\times\{t_i\}$ are convex for $0\leq i\leq n$ and $S^2\times[t_{i-1},t_i]$ with the restricted contact structure is isotopic to the bypass attachment $\sigma_i$. Then $\xi\ast\triangle$ is isotopic to $\xi_k$ for $0\leq k\leq n$, where $\xi_k$ is the contact structure isotopic to a sequence of bypass attachments $\sigma_1\ast\cdots\ast\sigma_k\ast\triangle\ast\sigma_{k+1}\cdots\ast\sigma_n$.
\end{cor}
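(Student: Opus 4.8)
The plan is to reduce the statement to a repeated application of Corollary~\ref{Commutativity}: one slides the bypass triangle attachment $\triangle$ leftward through the sequence $\sigma_1\ast\cdots\ast\sigma_n$, past $\sigma_n$, then past $\sigma_{n-1}$, and so on, until it sits between $\sigma_k$ and $\sigma_{k+1}$. Concretely, for $k\le j\le n$ I would let $\eta_j$ be the contact structure on $S^2\times[0,1]$ determined by the sequence of attachments $\sigma_1\ast\cdots\ast\sigma_j\ast\triangle\ast\sigma_{j+1}\ast\cdots\ast\sigma_n$, note that $\eta_n$ is isotopic relative to the boundary to $\xi\ast\triangle$ while $\eta_k=\xi_k$, and then prove by downward induction on $j$ that $\xi\ast\triangle\simeq\eta_j$ relative to the boundary. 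The base case $j=n$ is immediate.

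For the inductive step, going from $\eta_j$ to $\eta_{j-1}$ for $k<j\le n$, I would set $N=S^2\times[0,t_{j-1}]$ and $\zeta=\xi|_N$, which is a contact manifold with convex boundary $S^2\times\{t_{j-1}\}$, and recall from the hypothesis that the contact structure on $S^2\times[t_{j-1},t_j]$ is isotopic relative to the boundary to a bypass attachment $\sigma_j$ along some admissible arc $\alpha_j\subset S^2\times\{t_{j-1}\}$. Applying Corollary~\ref{Commutativity} to $(N,\zeta)$ with the arc $\alpha_j$ yields an isotopy $\zeta\ast\sigma_j\ast\triangle\simeq\zeta\ast\triangle\ast\sigma_j$ relative to the boundary. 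This isotopy is supported in the interior of $N$ together with a collar of $S^2\times\{t_{j-1}\}$ on which $\sigma_j$ and $\triangle$ are attached, so it fixes a neighborhood of the outer convex boundary; extending it by the identity over the remaining attachments $\sigma_{j+1}\ast\cdots\ast\sigma_n$ gives an isotopy $\eta_j\simeq\eta_{j-1}$ on all of $S^2\times[0,1]$, relative to $S^2\times\{0,1\}$. Composing with the inductive hypothesis completes the step, and carrying the induction down to $j=k$ gives $\xi\ast\triangle\simeq\eta_k=\xi_k$, as desired.

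The one point that needs care — and where I expect the (minor) bookkeeping to go — is verifying that the hypotheses of Corollary~\ref{Commutativity} are genuinely available at each stage: that $S^2\times\{t_{j-1}\}$ stays convex, that $\alpha_j$ remains an admissible arc after a bypass triangle has been attached to $(N,\zeta)$ (for which I would invoke the observation recorded just above that a bypass triangle attachment leaves the dividing set unchanged up to an isotopy supported near the attaching arc), and that the isotopy supplied by Corollary~\ref{Commutativity} is supported away from the outer boundary so that it does extend by the identity over $\sigma_{j+1}\ast\cdots\ast\sigma_n$. Beyond that, the argument is a direct chain of isotopies with no computation involved.
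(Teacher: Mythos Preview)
Your argument is correct and is exactly the approach taken in the paper: the paper's proof is the single sentence ``This is an iterated application of Corollary~\ref{Commutativity},'' and your downward induction is precisely that iteration spelled out in detail. The bookkeeping points you flag (convexity of the intermediate spheres, admissibility of $\alpha_j$ after a bypass triangle, and support of the isotopy away from the outer boundary) are all implicit in the paper's setup and in the proof of Corollary~\ref{Commutativity}.
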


\begin{proof}
This is an iterated application of Corollary~\ref{Commutativity}.
\end{proof}

However, observe that subtracting a bypass triangle is in general not well-defined. So we need the following definition.

\begin{defn} \label{WeakIso}
Two contact structures $\xi$ and $\xi'$ on $S^2\times[0,1]$ are {\em stably isotopic}, denoted by $\xi\sim\xi'$, if they become isotopic after attaching finitely many bypass triangles to $S^2\times\{1\}$ simultaneously, i.e., $\xi \ast \triangle^n \simeq \xi' \ast \triangle^n$ for some $n\in\mathbb{N}$.
\end{defn}

\section{Overtwisted contact structures on $S^2\times[0,1]$ induced by isotopies.}

Let $\xi$ be an overtwisted contact structure on $S^2\times[0,1]$ such that $S^2\times\{0\}$ and $S^2\times\{1\}$ are convex spheres. In general, any such $\xi$ can be represented by a sequence of bypass attachments. More precisely, by Theorem~\ref{film pic}, there exists an increasing sequence $0=t_0<t_1< \cdots <t_n=1$ such that $S^2\times\{t_i\}$ is convex and $\xi|_{S^2\times[t_{i-1},t_i]}$ is isotopic to a bypass attachment $\sigma_i$ for $i=1,\cdots,n$. In this section, we consider a special class of overtwisted contact structures on $S^2\times[0,1]$ such that $S^2\times\{t\}$ is convex for $t\in[0,1]$, in other words, there is no bypass attached.

Let $\xi_0$ be an $I$-invariant contact structure on $S^2\times[0,1]$ with dividing set $\Gamma_0$ on $S^2\times\{0\}$. Let $\phi_t:S^2 \to S^2$, $t\in[0,1]$, be an isotopy such that $\phi_0=id$. We define a new contact structure $\xi_{\Gamma_0,\Phi}=\Phi_*(\xi_0)$ on $S^2\times[0,1]$, where $\Phi:S^2\times[0,1] \to S^2\times[0,1]$ is defined by $(x,t)\mapsto(\phi_t(x),t)$. Observe that $S^2\times\{t\}$ is convex with respect to $\xi_{\Gamma_0,\Phi}$ for all $t\in[0,1]$ by construction. Hence we get a smooth family of dividing sets $\Gamma_{S^2\times\{t\}}$ for $t\in[0,1]$. Conversely, a smooth family of dividing sets $\Gamma_{S^2\times\{t\}}$, $t\in[0,1]$ defines a unique contact structure on $S^2\times[0,1]$, which is isotopic to $\xi_{\Gamma_0,\Phi}$ constructed above for some isotopy $\phi_t$, $t\in[0,1]$. In practice, it is usually easier to keep track of the dividing sets rather than the isotopy.

\begin{defn}
A contact structure $\xi$ on $S^2\times[0,1]$ is {\em induced by an isotopy} if $S^2\times\{t\}$ is convex for all $t\in[0,1]$, or, equivalently, there exists an isotopy $\Phi:S^2\times[0,1] \to S^2\times[0,1]$ such that $\xi$ is isotopic to $\xi_{\Gamma_0,\Phi}$ as constructed above.
\end{defn}

It is convenient to have the following lemma.

\begin{lemma} \label{rounding}
Let $\xi$, $\xi'$ be two contact structures on $S^2\times[0,1]$ induced by isotopies and let $\Gamma_t$, $\Gamma'_t$ be dividing sets on $S^2\times\{t\}$, $0\leq t\leq1$, with respect to $\xi$, $\xi'$ respectively. If $\Gamma_0=\Gamma'_0$, $\Gamma_1=\Gamma'_1$ and there exists a path of smooth families of multicurves $\Gamma^s_t$, $0\leq s\leq 1$ satisfying the following:
\be
    \item{$\Gamma^s_t$ is a multicurve, i.e., a finite disjoint union of simple closed curves, contained in $S^2\times\{t\}$ for $0\leq s\leq1$, $0\leq t\leq1$.}
    \item{$\Gamma^0_t=\Gamma_t$, $\Gamma^1_t=\Gamma'_t$ for $0\leq t\leq1$,}
    \item{$\Gamma^s_0=\Gamma_0$, $\Gamma^s_1=\Gamma_1$ for $0\leq s\leq1$.}
\ee
then $\xi$ is isotopic to $\xi'$ relative to the boundary.
\end{lemma}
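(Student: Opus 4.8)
The plan is to translate the statement into one about homotopy classes of paths of multicurves on $S^2$ and then lift the given homotopy $\Gamma^s_t$ to a path of ambient isotopies of $S^2\times[0,1]$ that fix the boundary. First I would normalize the two contact structures: by Giroux flexibility (Theorem~\ref{Flex}) and the fact recorded just before this lemma that a smooth family of dividing sets determines a contact structure on $S^2\times[0,1]$ up to isotopy, we may assume, after an isotopy relative to the boundary, that $\xi=\Phi_*\xi_0$ and $\xi'=\Psi_*\xi_0$, where $\xi_0$ is a fixed $I$-invariant contact structure with dividing set $\Gamma_0$, $\Phi(x,t)=(\phi_t(x),t)$ and $\Psi(x,t)=(\psi_t(x),t)$ with $\phi_0=\psi_0=\mathrm{id}$, $\phi_t(\Gamma_0)=\Gamma_t$ and $\psi_t(\Gamma_0)=\Gamma'_t$. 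Note that each $\Gamma^s_t$ is isotopic to $\Gamma_0$ inside $S^2$, since $\Gamma^s_0\equiv\Gamma_0$ and the isotopy type of a smoothly varying multicurve in a surface is locally, hence globally, constant on the connected parameter space.

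\textbf{Lifting the homotopy.} Consider the evaluation map $\mathrm{ev}\colon\mathrm{Diff}(S^2)\to\{\text{multicurves in }S^2\}$, $\phi\mapsto\phi(\Gamma_0)$; restricted to the component containing $\Gamma_0$ this is a fibration with fiber $\mathrm{Diff}(S^2,\Gamma_0)$. The three edges $\{t=0\}$, $\{t=1\}$ and $\{s=0\}$ of the square $[0,1]_s\times[0,1]_t$ form a contractible subcomplex onto which the square deformation retracts, and over these edges the base map $(s,t)\mapsto\Gamma^s_t$ already carries a lift: the constant $\mathrm{id}$ over $\{t=0\}$, the constant $\phi_1$ over $\{t=1\}$ (legitimate because $\Gamma^s_1\equiv\Gamma_1=\phi_1(\Gamma_0)$), and the path $\phi_\bullet$ over $\{s=0\}$, which agree at the two corners in question. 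By the homotopy lifting property this extends to a smooth family $\phi^s_t$ with $\phi^0_\bullet=\phi_\bullet$, $\phi^s_0=\mathrm{id}$, $\phi^s_1=\phi_1$ and $\phi^s_t(\Gamma_0)=\Gamma^s_t$ for all $s,t$.

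\textbf{Conclusion.} Put $\eta^s:=(\Phi^s)_*\xi_0$ with $\Phi^s(x,t)=(\phi^s_t(x),t)$. This is a smooth path of contact structures with $\eta^0=\Phi_*\xi_0\simeq\xi$ relative to the boundary. The maps $G_s:=\Phi^s\circ(\Phi^0)^{-1}$ form an ambient isotopy of $S^2\times[0,1]$ with $G_0=\mathrm{id}$ and $(G_s)_*\eta^0=\eta^s$, and $G_s$ fixes $\bdry(S^2\times[0,1])$: it is the identity on $S^2\times\{0\}$ because every $\Phi^s$ restricts to the identity there, and on $S^2\times\{1\}$ because $\phi^s_1=\phi_1$ for all $s$. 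Hence $\eta^0\simeq\eta^1$ relative to the boundary. Finally $\eta^1=(\Phi^1)_*\xi_0$ is a contact structure induced by an isotopy whose family of dividing sets is $\phi^1_t(\Gamma_0)=\Gamma^1_t=\Gamma'_t$, the same family as that of $\xi'=\Psi_*\xi_0$; so $\eta^1\simeq\xi'$ relative to the boundary. Chaining the three isotopies gives $\xi\simeq\xi'$ relative to the boundary.

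\textbf{Main obstacle.} The delicate points are, first, arranging the lift over the square to be smooth and, crucially, normalized so that $\phi^s_1\equiv\phi_1$ (this is what keeps the boundary fixed throughout the final isotopy); and second, the last step, that a single smooth family of multicurves determines the contact structure up to isotopy relative to the boundary — i.e.\ removing the residual ambiguity by elements of $\mathrm{Diff}(S^2,\Gamma_0)$. The latter is exactly the phenomenon flagged in the warning preceding this section: distinct multicurves in a fixed isotopy class need not be joined inside that class through a given family, and the reason it causes no trouble here is that the surface is $S^2$, where the relevant space of choices is simply connected (cf.\ Remark~\ref{uniquetight}), so that two isotopies of $S^2$ through the same path of multicurves produce contact structures that are isotopic relative to the boundary.
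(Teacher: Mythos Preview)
Your argument is correct in outline and is essentially a hands-on unpacking of the paper's two-line proof, which reads in full: ``By Giroux's flexibility theorem, the path $\Gamma^s_t$ of multicurves determines a path of contact structures $\xi^s$ on $S^2\times[0,1]$ such that $\xi^0=\xi$, $\xi^1=\xi'$. Hence $\xi$ is isotopic to $\xi'$ relative to the boundary by Gray's stability theorem.'' The paper treats Giroux flexibility as a parametric black box producing $\xi^s$ directly; you instead make the construction explicit by lifting the homotopy of multicurves to a family of diffeomorphisms and pushing forward the $I$-invariant structure. That is a legitimate and more transparent route, at the cost of introducing the endpoint-matching step $\eta^1\simeq\xi'$.

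The one genuine slip is your justification of that last step. You invoke Remark~\ref{uniquetight}, but that remark concerns only the case $\Gamma_\Sigma=S^1$; for a general multicurve $\Gamma_0$ on $S^2$ the space of representatives is \emph{not} simply connected --- indeed the entire remainder of Section~6 is devoted to the nontrivial contact structures produced by pure braids of such dividing sets. What actually rescues the step is the uniqueness statement you already cited at the outset (the sentence just before Definition~6.1: a smooth family of dividing sets determines the contact structure up to isotopy), which is again parametric Giroux flexibility. But once you grant yourself that fact, the diffeomorphism-lifting detour is unnecessary and you have recovered the paper's argument verbatim. If you want to keep the explicit-lift approach self-contained, the clean fix is to prescribe the lift on all four edges of the square (including $\phi^1_\bullet=\psi_\bullet$, after first arranging $\phi_1=\psi_1$ via the assumed agreement of $\xi,\xi'$ near the boundary); the obstruction to filling lives in $\pi_1(\mathrm{Diff}(S^2,\Gamma_0))$, and one then argues that any such loop acts trivially on the induced contact structure --- which again comes down to Giroux flexibility.
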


\begin{proof}
By Giroux's flexibility theorem, the path $\Gamma^s_t$, $0\leq s\leq1$ of multicurves determines a path of contact structures $\xi^s$ on $S^2\times[0,1]$ such that $\xi^0=\xi$, $\xi^1=\xi'$. Hence $\xi$ is isotopic to $\xi'$ relative to the boundary by Gray's stability theorem.
\end{proof}

We first consider a bypass attachment to the contact structures on $S^2\times[0,1]$ induced by an isotopy.

\begin{lemma} \label{DesAscBypass}
Let $\xi_{\Gamma_0,\Phi}$ be a contact structure on $S^2\times[0,1/2]$ induced by an isotopy $\phi_t:S^2 \to S^2$, $t\in[0,1/2]$, and $(S^2\times[1/2,1],\sigma_\alpha)$ be a bypass attachment along an admissible arc $\alpha\subset S^2\times\{1/2\}$. Then there exists an admissible arc $\tilde\alpha\subset S^2\times\{0\}$ such that $(S^2\times[0,1],\xi_{\Gamma_0,\Phi} \ast \sigma_\alpha)$ is isotopic, relative to the boundary, to $(S^2\times[0,1],\sigma_{\tilde\alpha} \ast \xi_{\Gamma'_0,\Phi})$, where $\Gamma'_0$ is the dividing set obtained by attaching a bypass along $\alpha$ to $\Gamma_0$.
\end{lemma}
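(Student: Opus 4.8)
\noindent\emph{Proof proposal.} The plan is to produce the arc $\tilde\alpha$ explicitly, conjugate by a fibre‑preserving diffeomorphism to reduce to the case of a genuinely $I$‑invariant lower layer, slide the bypass down through that layer, and match the remaining isotopy‑induced pieces by Lemma~\ref{rounding}. For the setup: by construction the dividing set of $\xi_{\Gamma_0,\Phi}$ on $S^2\times\{t\}$ is $\phi_t(\Gamma_0)$, so $\alpha$ is admissible with respect to $\phi_{1/2}(\Gamma_0)$. Put $\tilde\alpha:=\phi_{1/2}^{-1}(\alpha)$, transported to $S^2\times\{0\}$ by $\phi_0=\mathrm{id}$; it is admissible with respect to $\Gamma_0$, and I take $\Gamma'_0$ to be the image of $\Gamma_0$ under the bypass move along $\tilde\alpha$. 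Since the bypass move on dividing sets is natural under diffeomorphisms, $\phi_{1/2}(\Gamma'_0)$ equals the bypass move along $\phi_{1/2}(\tilde\alpha)=\alpha$ applied to $\phi_{1/2}(\Gamma_0)$, which is exactly the dividing set of $\xi_{\Gamma_0,\Phi}\ast\sigma_\alpha$ on $S^2\times\{1\}$ and also that of $\sigma_{\tilde\alpha}\ast\xi_{\Gamma'_0,\Phi}$ on $S^2\times\{1\}$; both structures restrict to $\Gamma_0$ on $S^2\times\{0\}$. So the two contact structures have matching boundary dividing sets, and by Theorem~\ref{Flex} we may assume they agree near $\partial(S^2\times[0,1])$.

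For the reduction: using Honda's lemma I first arrange that $\sigma_\alpha$ on $S^2\times[1/2,1]$ is supported in $N(\alpha)\times[1/2,1/2+\delta]$ for small $\delta>0$, so that $\xi_{\Gamma_0,\Phi}\ast\sigma_\alpha$ is $I$‑invariant on $S^2\times[1/2+\delta,1]$ and isotopy‑induced off $N(\alpha)$. Define $\Psi\colon S^2\times[0,1]\to S^2\times[0,1]$ by $\Psi(x,t)=(\phi_t^{-1}(x),t)$ for $t\le 1/2$ and $\Psi(x,t)=(\phi_{1-t}^{-1}(x),t)$ for $t\ge 1/2$. Since $\phi_0=\mathrm{id}$, the map $\Psi$ is the identity on $\partial(S^2\times[0,1])$, and its $S^2$‑component is the path $t\mapsto\phi_t^{-1}$ traversed forward and then backward, hence a null‑homotopic loop in $\mathrm{Diff}(S^2)$; so $\Psi$ is isotopic to the identity rel boundary and conjugating by $\Psi$ preserves rel‑boundary isotopy classes. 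On $S^2\times[0,1/2]$ the map $\Psi$ is the inverse of the diffeomorphism defining $\xi_{\Gamma_0,\Phi}$, so $\Psi_*(\xi_{\Gamma_0,\Phi})$ is the $I$‑invariant model $\xi_0$ with dividing set $\Gamma_0$ on every slice; on $S^2\times[1/2,1]$ it carries $\sigma_\alpha$ to the bypass attachment along $\tilde\alpha\subset S^2\times\{1/2\}$ followed by an isotopy‑induced layer whose dividing set on $S^2\times\{t\}$ is $\phi_{1-t}^{-1}(\phi_{1/2}(\Gamma'_0))$.

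For the sliding step and conclusion: a bypass attachment inside an $I$‑invariant collar is supported near its arc, so its non‑convex level slides freely through the collar, and $\xi_0\ast\sigma_{\tilde\alpha}$ is isotopic rel boundary to $\sigma_{\tilde\alpha}\ast\xi_0'$, with the bypass now near $S^2\times\{0\}$ and an $I$‑invariant layer of dividing set $\Gamma'_0$ on top. Thus $\Psi_*(\xi_{\Gamma_0,\Phi}\ast\sigma_\alpha)$ is rel‑boundary isotopic to $\sigma_{\tilde\alpha}$ followed by an isotopy‑induced layer running from $\Gamma'_0$ to $\phi_{1/2}(\Gamma'_0)$, and the same reduction applied to $\sigma_{\tilde\alpha}\ast\xi_{\Gamma'_0,\Phi}$ shows $\Psi_*(\sigma_{\tilde\alpha}\ast\xi_{\Gamma'_0,\Phi})$ is $\sigma_{\tilde\alpha}$ followed by another isotopy‑induced layer with the same two boundary dividing sets. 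These two isotopy‑induced layers are isotopic rel boundary by Lemma~\ref{rounding}, once one checks its hypothesis that their dividing‑set movies from $\Gamma'_0$ to $\phi_{1/2}(\Gamma'_0)$ are homotopic rel endpoints; the only possible obstruction is a class in $\pi_1(\mathrm{Diff}(S^2))\cong\mathbb Z/2$, which dies in the space of multicurves (immediately from Remark~\ref{uniquetight} when $\#\Gamma_0=1$, and in general because a full rotation of $S^2$ becomes trivial once one only records a fixed multicurve it preserves setwise). Undoing the conjugation by $\Psi^{-1}$ rel boundary then gives $\xi_{\Gamma_0,\Phi}\ast\sigma_\alpha\simeq\sigma_{\tilde\alpha}\ast\xi_{\Gamma'_0,\Phi}$ relative to the boundary.

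The step I expect to be the main obstacle is the bookkeeping that keeps $\Psi$ the identity on the \emph{whole} boundary — the naive choice $\Psi(x,t)=(\phi_t^{-1}(x),t)$ has $\Psi|_{S^2\times\{1\}}=\phi_{1/2}^{-1}\ne\mathrm{id}$ and would not yield a rel‑boundary isotopy — together with verifying, via Lemma~\ref{rounding}, that the two isotopy‑induced layers surviving the reduction are genuinely rel‑boundary isotopic rather than merely equal on the boundary.
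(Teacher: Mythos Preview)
Your argument is correct, but it is considerably more elaborate than the paper's. The paper proves Lemma~\ref{DesAscBypass} in one move: it takes the actual bypass half-disk $D$ attached along $\alpha$ on $S^2\times\{1/2\}$ and extends it downward by the trace of the isotopy, setting $\tilde D = D \cup \Phi(\tilde\alpha\times[0,1/2])$ with $\tilde\alpha=\phi_{1/2}^{-1}(\alpha)$. This $\tilde D$ is a bypass along $\tilde\alpha$ on $S^2\times\{0\}$, and after peeling off a neighbourhood of $\tilde D$ the remainder of $S^2\times[0,1]$ is tautologically foliated by convex spheres carried by $\Phi$, i.e.\ is $\xi_{\Gamma'_0,\Phi}$. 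No conjugation, no Lemma~\ref{rounding}, no $\pi_1(\mathrm{Diff}(S^2))$ discussion is needed. Your route---conjugate by the ``there-and-back'' diffeomorphism $\Psi$ to make the lower layer genuinely $I$-invariant, slide, then compare the residual isotopy-induced pieces via Lemma~\ref{rounding}---works, and your handling of the boundary (folding $\Psi$ so it is the identity at $t=1$) and of the $\pi_1(\mathrm{Diff}^+(S^2))\cong\mathbb Z/2$ obstruction (it dies in multicurve space because a full rotation about an axis preserving the multicurve gives a constant loop) are both fine. The trade-off is clear: the paper's geometric re-foliation is shorter and avoids any homotopy bookkeeping, while your approach is more algebraic and would generalise more readily to situations where one cannot literally extend the bypass disk but can still conjugate and invoke Lemma~\ref{rounding}.
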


\begin{proof}
We basically re-foliate the contact manifold $(S^2\times[0,1],\xi_{\Gamma_0,\Phi} \ast \sigma_\alpha)$. Recall that $\sigma_\alpha$ attaches a bypass $D$ on $S^2\times\{1/2\}$ so that $\bdry D=\alpha\cup\beta$ is the union of two Legendrian arcs, where $tb(\alpha)=-1$, $tb(\beta)=0$. We extend $D$ to a new bypass $\tilde D$ on $S^2\times\{0\}$ through the isotopy $\phi_t:S^2 \to S^2$, $t\in[0,1/2]$, by defining $\tilde D=D \cup \Phi(\tilde\alpha\times[0,1/2])$, where $\tilde\alpha=\phi_{1/2}^{-1}(\alpha)\subset S^2\times\{0\}$ is the new admissible arc along which $\tilde D$ is attached, and $\Phi:S^2\times[0,1/2] \to S^2\times[0,1/2]$ is defined by $(x,t)\mapsto(\phi_t(x),t)$. By attaching the new bypass $\tilde D$ on $S^2\times\{0\}$, observe that the rest of $S^2\times[0,1]$ can be foliated by convex surfaces, and the contact structure is also induced by $\Phi$. Hence $\xi_{\Gamma_0,\Phi}\ast\sigma_\alpha$ is isotopic to $\sigma_{\tilde\alpha}\ast\xi_{\Gamma'_0,\Phi}$ as desired.
\end{proof}

\begin{defn}
The admissible arc $\tilde\alpha$ constructed in Lemma~\ref{DesAscBypass} is called a {\em push-down} of $\alpha$. Conversely, we call $\alpha$ a {\em pull-up} of $\tilde\alpha$.
\end{defn}

The rest of this section is rather technical and can be skipped at the first time reading. The only result needed for our proof of Theorem~\ref{OT} is Proposition~\ref{disjointness}.

We consider a subclass of the contact structures on $S^2\times[0,1]$ induced by isotopies which we will be mainly interested in. Fix a metric on $S^2$. Without loss of generality, we assume that there exists a small disk $D^2_\epsilon(y) \subset S^2$ centered at $y$ of radius $\epsilon$ and a codimension 0 submanifold $\tilde\Gamma_{S^2\times\{0\}}$ of $\Gamma_{S^2\times\{0\}}$ such that $\tilde\Gamma_{S^2\times\{0\}} \subset D^2_\epsilon(y)$ and  $D^2_\epsilon(y) \cap \Gamma_{S^2\times\{0\}} = \tilde\Gamma_{S^2\times\{0\}}$. Let $\gamma(s)\subset S^2\times\{0\}$, $s\in[0,1]$ be an embedded oriented loop such that $\gamma(0)=\gamma(1)=y$. Let $A(\gamma)$ be an annulus neighborhood of $\gamma$ containing $D^2_\epsilon(y)$ and disjoint from other components of the dividing set as depicted in Figure~\ref{Permutation}. We define an isotopy $\phi_t:S^2 \to S^2$, $t\in[0,1]$, supported in $A(\gamma)$ which parallel transports $D^2_\epsilon(y)$ along $\gamma$ in $A(\gamma)$. More precisely, by applying the stereographic projection map, we can identify $A(\gamma)$ with an annulus in $\mathbb{R}^2$. Then the parallel transportation is given by an affine map $\phi_t:x \mapsto x+\gamma(t)-\gamma(0)$ for any $x \in D^2_\epsilon(y)$ and $t\in[0,1]$.

\begin{figure}[h]
    \begin{overpic}[scale=.3]{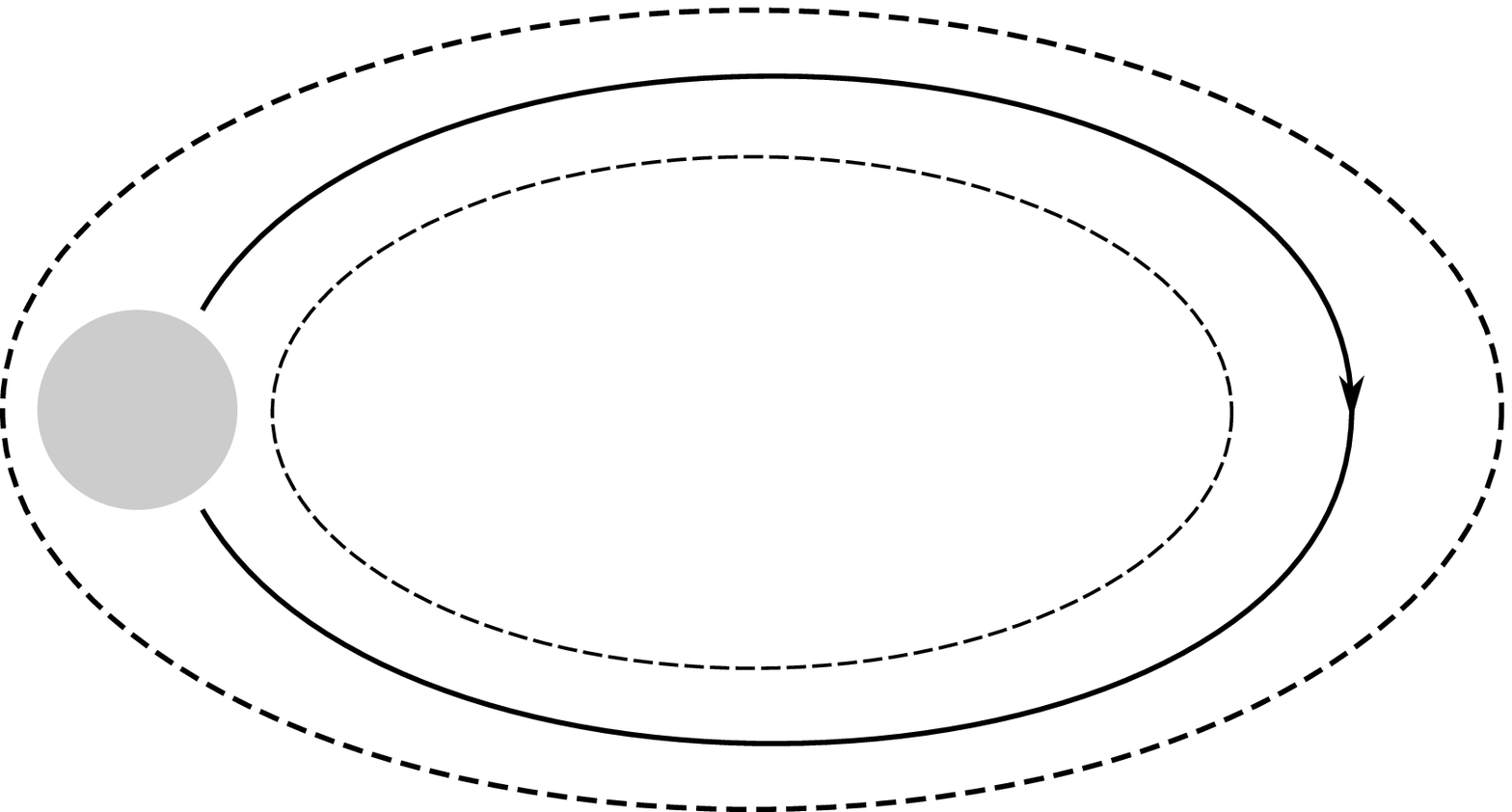}
    \put(7.5,24){\textcolor{red}{\small{$\tilde\Gamma$}}}
    \put(46,24){\textcolor{red}{\small{$\Gamma\setminus{\tilde\Gamma}$}}}
    \put(110,24){\textcolor{red}{\small{$\Gamma\setminus{\tilde\Gamma}$}}}
    \put(90,33){\small{$\gamma$}}
    \put(91,45){\small{$A(\gamma)$}}
    \end{overpic}
    \caption{}
    \label{Permutation}
\end{figure}

\begin{defn} \label{braidDef}
With the small disk $D^2_\epsilon(y) \supset \tilde\Gamma_{S^2\times\{0\}}$ such that $\tilde\Gamma_{S^2\times\{0\}} \cap \bdry D^2_\epsilon(y)=\emptyset$, the annulus $A(\gamma) \supset \gamma$ and the isotopy $\phi_t:S^2 \to S^2$ chosen as above, we say that the contact structure $\xi_{\Gamma_{S^2\times\{0\}},\Phi}$ on $S^2\times[0,1]$ is induced by a {\em pure braid of the dividing set}, where $\Phi:S^2\times[0,1] \to S^2\times[0,1]$ is induced by $\phi_t$ as before. We denote such contact structures by $\xi_{\Gamma,\Phi(\tilde\Gamma,D^2_\epsilon(y),\gamma)}$. When there is no confusion, we also abbreviate it by $\xi_{\tilde\Gamma,D^2_\epsilon,\gamma}$.
\end{defn}

\begin{rmk}
For any simply connected region $D\subset S^2\times\{0\}$ containing $\tilde\Gamma_{S^2\times\{0\}}$, one can isotop so that $D$ becomes a round disk with small radius as required in Definition~\ref{braidDef}. The isotopy class of the contact structure on $S^2\times[0,1]$ induced by a pure braid of the dividing set only depends on the choice of $D \supset \tilde\Gamma_{S^2\times\{0\}}$ and the isotopy class of $\gamma$.
\end{rmk}

\begin{rmk}
If $\xi$ is a contact structure on $S^2\times[0,1]$ induced by a pure braid of the dividing set, then $\Gamma_{S^2\times\{0\}}=\Gamma_{S^2\times\{1\}}$.
\end{rmk}

Before we give a complete classification of contact structures on $S^2\times[0,1]$ induced by pure braids of the dividing set, we make a digression into the study of its homotopy classes using a generalized version of the Pontryagin-Thom construction for manifolds with boundary. See~\cite{Hu} for more discussions on the generalized Pontryagin-Thom construction.

We can always assume that the isotopy $\phi_t(\tilde\Gamma,D^2_\epsilon(y),\gamma): S^2 \to S^2$, $t\in[0,1]$, discussed in Definition~\ref{braidDef} is supported in a disk $D^2\subset S^2$. Trivialize the tangent bundle of $D^2 \times[0,1]$ by embedding it into $\mathbb{R}^3$ so that $D^2$ is contained in the $xy$-plane. Consider the Gauss map $G:(D^2\times[0,1],\xi_{\tilde\Gamma,D^2_\epsilon,\gamma})\to S^2$. By Lemma~\ref{rounding}, we can assume without loss of generality that the dividing set is a disjoint union of round circles in $D^2\times\{t\}$ for all $0\leq t\leq1$, and $p=(1,0,0)\in S^2 \subset \mathbb{R}^3$ is a regular value. Suppose the number of connected components $\#\Gamma_{D^2\times\{0\}}=m$, then the Pontryagin submanifold $\mathcal{B}=G^{-1}(p)$ is an oriented framed monotone braid in the sense that $\mathcal{B}$ transversely intersects $D^2\times\{t\}$ in $m$ points for any $0\leq t\leq 1$, and each connected component of the dividing set contains exactly one point. It is easy to check that the pull-back framing is the blackboard framing, and consequently the self-linking number of $\mathcal{B}$ is exactly $writhe(\mathcal{B})$. It follows from the generalized Pontryagin-Thom construction that the homotopy class of a contact structure on $D^2\times[0,1]$ relative to the boundary is uniquely determined by the relative framed cobordism class of its Pontryagin submanifold $\mathcal{B}$, and hence is uniquely determined by $writhe(\mathcal{B})$ since $H_1(D^2\times[0,1],\bdry(D^2\times[0,1]);\mathbb{Z})=0$. One may think of $writhe(\mathcal{B})$ as a relative version of the Hopf invariant associated with boundary relative homotopy classes of maps $D^2\times[0,1] \simeq B^3 \to S^2$.\\

\begin{expl} \label{expl1}
{\em
If $\Gamma_{D^2\times\{0\}}$ is the disjoint union of two isolated circles, and $\tilde\Gamma_{D^2\times\{0\}}=S^1 \subset D^2_\epsilon(y)$ is the circle on the left as depicted in Figure~\ref{braidEx1}. The isotopy $\phi_t$ parallel transports $D^2_\epsilon(y)$ along the oriented loop $\gamma$. We compute the homotopy class of the contact structure $\xi_{\tilde\Gamma,D^2_\epsilon,\gamma}$.

\begin{figure}[h]
    \begin{overpic}[scale=.3]{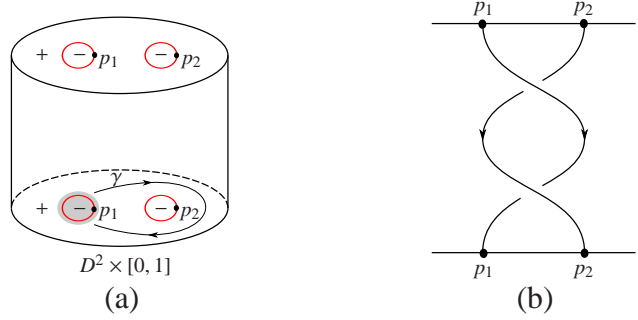}
    \put(15,-8){(a)}
    \put(81,-8){(b)}
    \put(14,6.7){\tiny{$p_1$}}
    \put(27,6.7){\tiny{$p_2$}}
    \put(14,31){\tiny{$p_1$}}
    \put(27.2,31){\tiny{$p_2$}}
    \put(74,-2.5){\tiny{$p_1$}}
    \put(90,-2.5){\tiny{$p_2$}}
    \put(74,39.5){\tiny{$p_1$}}
    \put(90,39.5){\tiny{$p_2$}}
    \put(4,7){\tiny{$+$}}
    \put(4,31.7){\tiny{$+$}}
    \put(10,7){\tiny{$-$}}
    \put(23,7){\tiny{$-$}}
    \put(10,31.7){\tiny{$-$}}
    \put(23,31.7){\tiny{$-$}}
    \put(11,-2.3){\tiny{$D^2\times[0,1]$}}
    \put(16,12.2){\tiny{$\gamma$}}
    \end{overpic}
    \newline
    \caption{(a) The contact structure on $S^2\times[0,1]$ induced by a full twist of the dividing circles, where $\{p_1,p_2\}$ are pre-images of the regular value $p=(1,0,0)\in S^2$. (b) The oriented braid with the blackboard framing $\mathcal{B}$ as the Pontryagin submanifold.}
    \label{braidEx1}
\end{figure}

According to the Pontryagin-Thom construction, since $writhe(\mathcal{B})=-2$, the homotopy class of $\xi_{\tilde\Gamma,D^2_\epsilon,\gamma}$ is in general different from the $I$-invariant contact structure, and the difference is measured by decreasing the Hopf invariant by 2.\footnote{However, if the divisibility of the Euler class is 2, then $\phi_t$ gives a contact structure which is homotopic to the $I$-invariant contact structure. We will discuss the divisibility of the Euler class in detail in Section 8.}
}\\
\end{expl}

\begin{expl} \label{expl2}
{\em
If $\Gamma_{D^2\times\{0\}}$ is the disjoint union of three circles, and $\tilde\Gamma_{D^2\times\{0\}}=S^1 \subset D^2_\epsilon(y)$ is the circle on the left as depicted in Figure~\ref{braidEx2}. The isotopy $\phi_t$ parallel transports $D^2_\epsilon(y)$ along the oriented loop $\gamma$. We compute the homotopy class of the contact structure $\xi_{\tilde\Gamma,D^2_\epsilon,\gamma}$.

\begin{figure}[h]
    \begin{overpic}[scale=.35]{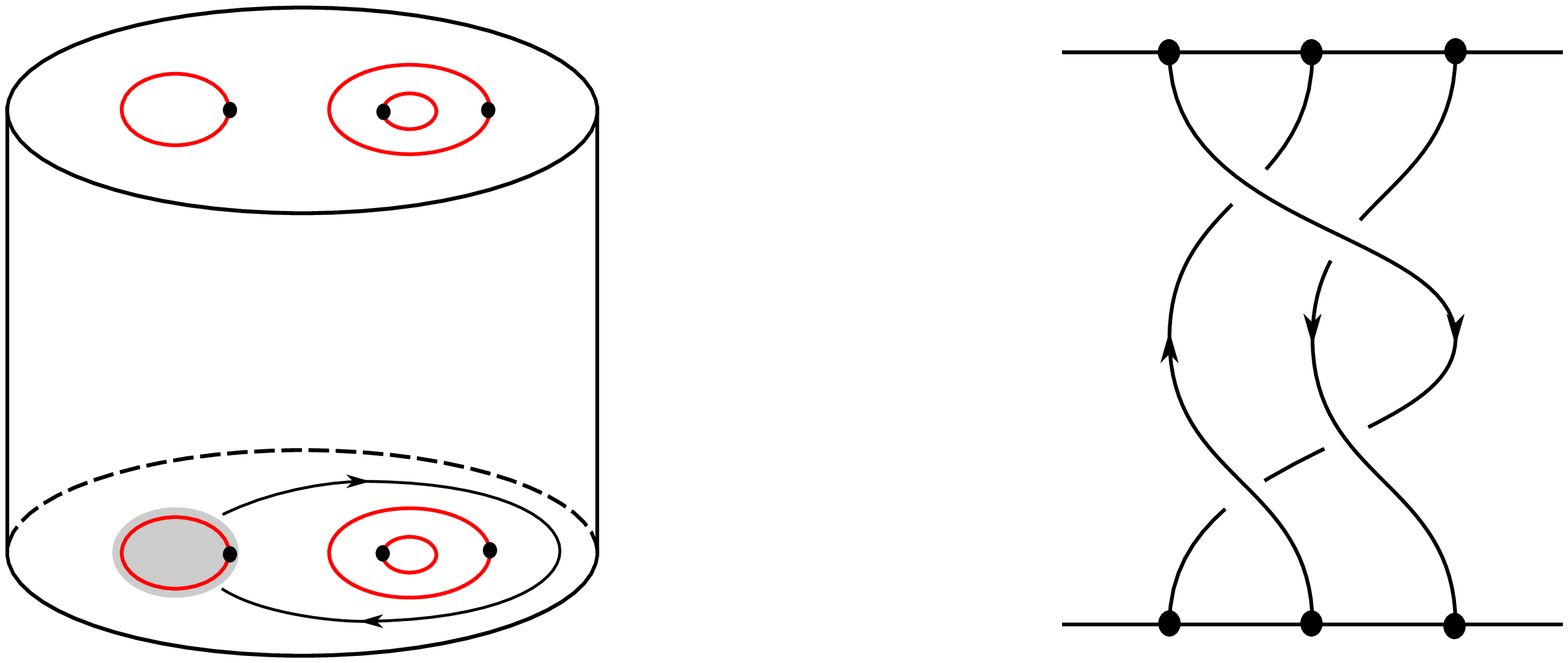}
    \put(14.5,5.2){\tiny{$p_1$}}
    \put(20.5,5.2){\tiny{$p_2$}}
    \put(31.3,5.2){\tiny{$p_3$}}
    \put(14.5,33.2){\tiny{$p_1$}}
    \put(20.8,33.2){\tiny{$p_2$}}
    \put(31.3,33.2){\tiny{$p_3$}}
    \put(15,-8){(a)}
    \put(81,-8){(b)}
    \put(12,-3.3){\tiny{$D^2\times[0,1]$}}
    \put(73,-2){\tiny{$p_1$}}
    \put(82,-2){\tiny{$p_2$}}
    \put(92,-2){\tiny{$p_3$}}
    \put(73,42){\tiny{$p_1$}}
    \put(82,42){\tiny{$p_2$}}
    \put(92,42){\tiny{$p_3$}}
    \put(3.5,5.5){\tiny{$+$}}
    \put(10,5.5){\tiny{$-$}}
    \put(24.8,5.66){\tiny{$+$}}
    \put(28,5.5){\tiny{$-$}}
    \put(3.5,34.3){\tiny{$+$}}
    \put(10,34.3){\tiny{$-$}}
    \put(24.8,34.3){\tiny{$+$}}
    \put(28,34.3){\tiny{$-$}}
    \put(17,11.5){\tiny{$\gamma$}}
    \end{overpic}
    \newline
    \caption{(a) A braiding by a full twist of the left-hand side dividing circle along $\gamma$, where $\{p_1,p_2,p_3\}=G^{-1}(p)$ is the pre-image of the regular value $p=(1,0,0)\in S^2$. (b) The oriented framed braid $\mathcal{B}$ as the Pontryagin submanifold.}
    \label{braidEx2}
\end{figure}

In this case, one computes that $writhe(\mathcal{B})=0$, hence $\xi_{\tilde\Gamma,D^2_\epsilon,\gamma}$ is homotopic to the $I$-invariant contact structure.
}
\end{expl}

Now we are ready to classify the contact structures induced by pure braids of the dividing set up to stable isotopy in the sense of Definition~\ref{braidDef}. One goal is to establish an isotopy equivalence relation between a pure braid of the dividing set and the bypass triangle attachment. To start with, we consider the contact structures induced by two special pure braids of the dividing set as depicted in Figure~\ref{Permute}. In Figure~\ref{Permute}(a), the dividing set $\tilde\Gamma \subset D^2_\epsilon(y)$ is a single circle, and the dividing set contained in the disk bounded by $\gamma$ and disjoint from $\tilde\Gamma$ is also a single circle. In Figure~\ref{Permute}(b), the dividing set $\tilde\Gamma \subset D^2_\epsilon(y)$ consists of $m$ isolated circles nested in another circle, and the dividing set contained in the disk bounded by $\gamma$ and disjoint from $\tilde\Gamma$ consists of $n$ isolated circles nested in another circle. We also assume that either $m$ or $n$ is not zero. For technical reasons, it is convenient to have the following definitions.

\begin{defn}
Given two disjoint embedded circles $\gamma,\gamma' \subset D^2$, we say $\gamma<\gamma'$ if and only if $\gamma$ is contained in the disk bounded by $\gamma'$.
\end{defn}

\begin{defn}
Let $\Gamma \subset D^2$ be a finite disjoint union of embedded circles. The {\em depth} of $\Gamma$ is the maximum length of chains $\gamma_1<\gamma_2<\cdots<\gamma_r$, where $\gamma_i\subset\Gamma$ is a single circle for any $i\in\{1,2,\cdots,r\}$.
\end{defn}

Observe that the depth of the dividing set in Figure~\ref{Permute}(a) is 1, and the depth of the dividing set in Figure~\ref{Permute}(b) is 2. It turns out that to study the contact structure induced by an arbitrary pure braid of the dividing set, it suffices to consider a finite composition of these two special cases.

\begin{figure}[h]
    \begin{overpic}[scale=.25]{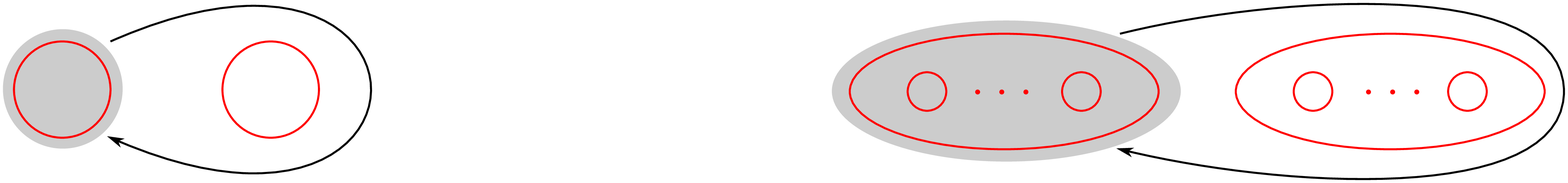}
    \put(10,-4.5){(a)}
    \put(77,-4.5){(b)}
    \put(22.5,10){\small{$\gamma$}}
    \put(100,10){\small{$\gamma$}}
    \put(-2.2,2){\small{$\Gamma'$}}
    \put(51.5,2){\small{$\Gamma'$}}
    \put(60,4.5){$\underbrace{}$}
    \put(84.7,4.5){$\underbrace{}$}
    \put(63.2,.8){\tiny{$m$}}
    \put(88.5,.8){\tiny{$n$}}
    \end{overpic}
    \newline
    \caption{}
    \label{Permute}
\end{figure}

\begin{lemma} \label{SimplePermA}
If $(S^2\times[0,1],\xi_{\tilde\Gamma,D^2_\epsilon,\gamma})$ is a contact manifold with contact structure induced by a pure braid of the dividing set where $\tilde\Gamma \subset D^2_\epsilon$ and $\gamma$ are chosen as in Figure~\ref{Permute}(a), then $(S^2\times[0,1],\xi_{\tilde\Gamma,D^2_\epsilon,\gamma})$ is isotopic relative to the boundary to $(S^2\times[0,1],\triangle^2)$, where $\triangle^2$ denotes the contact structure obtained by attaching two bypass triangles on $(S^2\times\{0\},\xi_{\tilde\Gamma,D^2_\epsilon,\gamma}|_{S^2\times\{0\}})$.
\end{lemma}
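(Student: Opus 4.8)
The plan is to express both contact structures as sequences of bypass attachments sharing the same word in dividing sets (up to the relations already proved) and to pin the resulting exponent down by the Pontryagin--Thom computation of Example~\ref{expl1}. First I would normalize: by Lemma~\ref{rounding} the isotopy $\phi_t$ may be replaced by any convenient isotopy inducing a homotopic family of dividing sets, so I arrange that throughout $t\in[0,1]$ the dividing set consists of two round circles, with $\Gamma'$ fixed and $\tilde\Gamma_t$ sweeping exactly once around $\Gamma'$ inside $A(\gamma)$, and that the contact structure is $I$-invariant near $t=0$ and near $t=1$. Then $\xi_{\tilde\Gamma,D^2_\epsilon,\gamma}$ and $\triangle^2$ have the same germ along $S^2\times\{0,1\}$, so "relative to the boundary" is meaningful and every isotopy below is supported in the interior.

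Next comes the step that converts the twist into bypasses. The foliation of $(S^2\times[0,1],\xi_{\tilde\Gamma,D^2_\epsilon,\gamma})$ by convex spheres has no bifurcations, so I would perturb it in the $t$-direction to make finitely many levels non-convex, writing the twist as a composition $\sigma_1\ast\cdots\ast\sigma_k$ of bypass attachments; since the twist returns $\tilde\Gamma$ to its starting position, this word induces the identity on the dividing set. I would choose the perturbation so that, after pushing the bypasses into a collar of $S^2\times\{0\}$ via the push-down of Lemma~\ref{DesAscBypass}, the word organizes into triples of the form $\sigma_\alpha\ast\sigma_{\alpha'}\ast\sigma_{\alpha''}$ where $\alpha'$ is the anti-bypass arc of $\sigma_\alpha$ (Remark~\ref{anibypassarc}); on the two-circle configuration the completing arc $\alpha''$ of each triple is trivial in the sense of Lemma~\ref{Triviality} (exactly as in the proof of Lemma~\ref{lemBTonS2}), so each triple collapses to one bypass triangle. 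Using Lemma~\ref{LemBT} and Corollary~\ref{Commutativity} to collect the triangles, this yields $\xi_{\tilde\Gamma,D^2_\epsilon,\gamma}\simeq\triangle^m$ rel boundary for some $m\in\mathbb{N}$.

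It remains to check $m=2$. By the generalized Pontryagin--Thom description, the relative homotopy class of a contact structure on a ball is a complete invariant of the boundary-relative framed cobordism class of its Pontryagin submanifold, and for $\xi_{\tilde\Gamma,D^2_\epsilon,\gamma}$ this submanifold is the two-strand braid of Example~\ref{expl1}, of writhe $-2$. Each bypass triangle attachment changes this writhe (equivalently the relative Hopf invariant) by one unit, by~\cite{Hu}; since both $\xi_{\tilde\Gamma,D^2_\epsilon,\gamma}$ and $\triangle^m$ carry the same boundary data, matching the homotopy classes forces $m=2$, which both rules out miscounting in the previous step and completes the identification.

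The main obstacle is the middle step: turning the bifurcation-free, everything-convex picture of the twist into a controlled word in bypass attachments and then simplifying that word to a power of $\triangle$. Bifurcations have to be introduced by hand in a way one can still analyze, and one must arrange the local model so that each introduced pair of bifurcations completes to a genuine bypass triangle with trivial third arc on the two-circle configuration; pushing everything down to a collar with Lemma~\ref{DesAscBypass} and keeping track of the order of attachments with Corollary~\ref{Commutativity} is the technical heart. Once this is in place, the normalization via Lemma~\ref{rounding} and the exponent count via Example~\ref{expl1} are routine applications of the lemmas already established.
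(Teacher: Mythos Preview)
Your proposal takes the hard direction, and the step you yourself flag as the main obstacle is where the argument is not justified. You want to start from the twist $\xi_{\tilde\Gamma,D^2_\epsilon,\gamma}$, which has every level convex, then deliberately destroy this by introducing bifurcations, organize the resulting word of bypasses into triples, and simplify to $\triangle^m$. But turning an arbitrary bypass word into a power of $\triangle$ is essentially the content of Proposition~\ref{Classifi}, proved much later and only up to \emph{stable} isotopy; you have given no mechanism by which ``choosing the perturbation'' produces a word that already factors into genuine bypass triangles with trivial third arc. The claim that on the two-circle configuration ``the completing arc $\alpha''$ of each triple is trivial'' is false in general: whether $\sigma_{\alpha''}$ is trivial depends on which side of the dividing set $\alpha$ sits on (compare the two rows of Figure~\ref{BTonS2}), so you cannot expect an arbitrary perturbation to cooperate.

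The paper avoids this entirely by reversing the direction of comparison. Instead of turning the twist into bypasses, it turns the bypass triangles into a twist: choose the admissible arc $\alpha$ as in Figure~\ref{braidLem1}(b), for which all three bypasses $\sigma_\alpha,\sigma_{\alpha'},\sigma_{\alpha''}$ in $\triangle_\alpha$ are \emph{trivial}. By Lemma~\ref{Triviality} this means $\triangle_\alpha^2$ is itself induced by an isotopy, hence foliated by convex spheres. Now both $\xi_{\tilde\Gamma,D^2_\epsilon,\gamma}$ and $\triangle_\alpha^2$ are isotopy-induced on a region where the dividing set has two components, so their Pontryagin submanifolds are $2$-strand monotone braids with blackboard framing. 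Such braids are classified up to isotopy rel endpoints by their writhe; Example~\ref{expl1} and Theorem~0.5 of~\cite{Hu} give writhe $-2$ for both, so the two contact structures are isotopic rel boundary. No bifurcations need to be introduced or cancelled, and the Hopf invariant is not merely a consistency check but the actual classifying invariant.
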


\begin{proof}
Let $\alpha$ be an admissible arc as depicted in Figure~\ref{braidLem1}(b). Suppose that both bypass triangles are attached along $\alpha$.

\begin{figure}[h]
    \begin{overpic}[scale=.2]{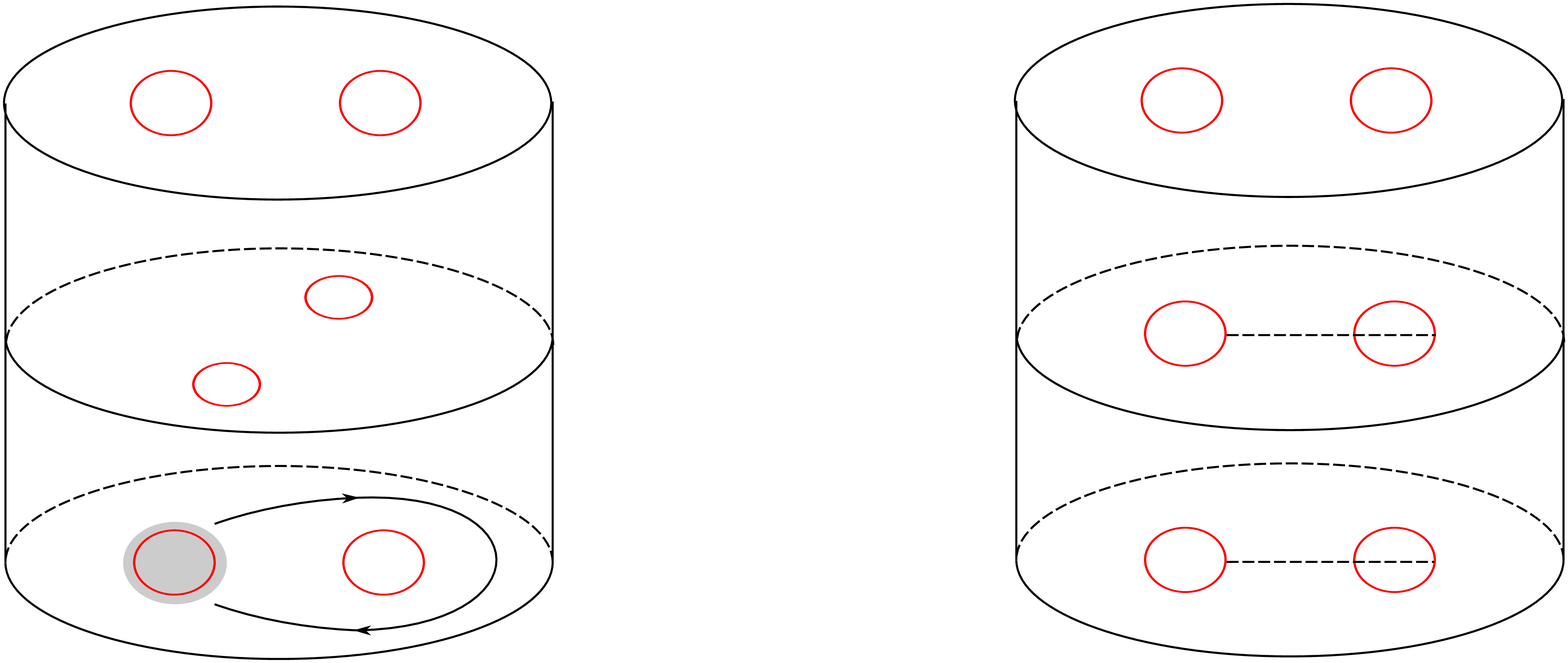}
    \put(14,-7){(a)}
    \put(80,-7){(b)}
    \put(16,.5){\tiny{$\gamma$}}
    \put(82,7){\tiny{$\alpha$}}
    \put(82,21.5){\tiny{$\alpha$}}
    \put(38,20){$\xi_{\tilde\Gamma,D^2_\epsilon,\gamma}$}
    \put(102,11){$\triangle_\alpha$}
    \put(102,24){$\triangle_\alpha$}
    \end{overpic}
    \newline
    \caption{(a) The contact structure is induced by parallel transporting $\tilde\Gamma \subset D^2_\epsilon$ along $\gamma$. (b) Attaching two bypass triangles along the admissible arc $\alpha$.}
    \label{braidLem1}
\end{figure}

Observe that $\triangle_\alpha=\sigma_{\alpha} \ast \sigma_{\alpha'} \ast \sigma_{\alpha''}$, where $\sigma_{\alpha}$, $\sigma_{\alpha'}$ and $\sigma_{\alpha''}$ are all trivial bypass attachments. Hence the contact manifold $(S^2\times[0,1],\triangle_{\alpha}^2)$ can be foliated by convex surfaces by Lemma~\ref{Triviality}. In other words, it is induced by an isotopy. By Theorem 0.5\footnote{The 3-dimensional obstruction class $o_3$ used in Theorem 0.5 in~\cite{Hu} is by definition the relative version of the Hopf invariant which we have discussed above.} in~\cite{Hu}, we know that attaching two bypass triangles $\triangle_{\alpha}^2$ decreases the Hopf invariant by 2. In Example~\ref{expl1}, we checked by Pontryagin-Thom construction that $\xi_{\tilde\Gamma,D^2_\epsilon,\gamma}$ also decreases the Hopf invariant by 2. Observe that the isotopy class relative to the boundary of a 2-strand oriented monotone braid with blackboard framing is uniquely determined by its self-linking number, which is equal to the Hopf invariant. Hence $\triangle^2_{\alpha}$ is isotopic $\Phi_{\tilde\Gamma,D^2_\epsilon,\gamma}$ in the region where both operations are supported. By extending the isotopy by identity to the rest of $S^2$, we conclude that $(S^2\times[0,1],\xi_{\tilde\Gamma,D^2_\epsilon,\gamma})$ is isotopic relative to the boundary to $(S^2\times[0,1],\triangle^2)$.
\end{proof}

\begin{lemma} \label{SimplePermBCD}
If $(S^2\times[0,1],\xi_{\tilde\Gamma,D^2_\epsilon,\gamma})$ is a contact manifold with contact structure induced by a pure braid of the dividing set where $\tilde\Gamma \subset D^2_\epsilon$ and $\gamma$ are chosen as in Figure~\ref{Permute}(b), then $(S^2\times[0,1],\xi_{\tilde\Gamma,D^2_\epsilon,\gamma})$ is stably isotopic to $(S^2\times[0,1],\triangle^{2(m-1)(n-1)})$.
\end{lemma}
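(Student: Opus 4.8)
The plan is to read off the boundary-relative homotopy class of $\xi_{\tilde\Gamma,D^2_\epsilon,\gamma}$ by the generalized Pontryagin--Thom construction, to match it with that of $\triangle^{2(m-1)(n-1)}$, and then to promote this homotopy to a stable isotopy. First I would apply Lemma~\ref{rounding} to isotop $\xi_{\tilde\Gamma,D^2_\epsilon,\gamma}$, relative to the boundary, to a contact structure still induced by an isotopy supported in a disk $D^2\subset S^2$, whose dividing set on each slice $S^2\times\{t\}$ is a disjoint union of round circles. Trivializing $T(D^2\times[0,1])$ by an embedding into $\mathbb{R}^3$ and passing to the Gauss map, the Pontryagin submanifold $\mathcal{B}$ over a regular value is an oriented framed monotone braid carrying the blackboard framing, with one strand per dividing circle, and, as recalled in the discussion preceding Example~\ref{expl1}, the homotopy class of $\xi_{\tilde\Gamma,D^2_\epsilon,\gamma}$ relative to the boundary is determined by $writhe(\mathcal{B})$.

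Next I would compute this writhe. Denote the blob circles by an outer circle $C_0$ together with $m$ mutually disjoint inner circles $C_1,\dots,C_m$, and the $\gamma$-interior circles by an outer $C'_0$ together with $n$ disjoint inner $C'_1,\dots,C'_n$. Transporting $D^2_\epsilon$ once around $\gamma$ gives each blob circle linking number $\pm 1$ with each $\gamma$-interior circle, while all other linkings and all remaining dividing circles are left untouched; the contribution of a pair $(C_i,C'_j)$ to the writhe is a universal constant times the product of the Pontryagin signs of the two strands. Since these signs alternate with nesting depth, $C_0$ and $C'_0$ share one sign and $C_1,\dots,C_m,C'_1,\dots,C'_n$ the opposite one, so $writhe(\mathcal{B})$ is a universal constant times $(1-m)(1-n)$; evaluating the same count on the configurations of Example~\ref{expl1} (where $m=n=0$ and the writhe is $-2$) and Example~\ref{expl2} (where $m=0$, $n=1$ and the writhe is $0$) fixes this to $writhe(\mathcal{B})=-2(m-1)(n-1)$. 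On the other side, exactly as in the proof of Lemma~\ref{SimplePermA}, choosing the constituent bypasses of $\triangle^{2(m-1)(n-1)}$ along an admissible arc for which they are all trivial makes $(S^2\times[0,1],\triangle^{2(m-1)(n-1)})$ induced by an isotopy (Lemma~\ref{Triviality}) with the same boundary dividing set, and by Theorem~0.5 of~\cite{Hu} its Hopf invariant differs from the $I$-invariant one by $-2(m-1)(n-1)$. Hence $\xi_{\tilde\Gamma,D^2_\epsilon,\gamma}$ and $\triangle^{2(m-1)(n-1)}$ are homotopic relative to the boundary as $2$-plane fields; when $(m-1)(n-1)<0$ all of this is read, after attaching a common large power $\triangle^N$, as a homotopy between $\xi_{\tilde\Gamma,D^2_\epsilon,\gamma}\ast\triangle^N$ and $\triangle^{N+2(m-1)(n-1)}$, so that the exponents in sight are nonnegative.

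The main obstacle is the last step. In the two-strand situation of Lemma~\ref{SimplePermA} the writhe already pinned down the isotopy class, but here $\#\Gamma\geq 3$ and a monotone braid is no longer determined relative to the boundary by its writhe; the residual data lives in the non-abelian pure braid group, and it must be shown to disappear after attaching finitely many bypass triangles --- which is precisely what ``stably isotopic'' is meant to capture. My plan is an induction on $m+n$, with base case $m=n=0$ being Lemma~\ref{SimplePermA} itself. For the inductive step I would attach a bypass triangle, push it past the inducing isotopy by (iterating) Lemma~\ref{DesAscBypass} so that its minimal overtwisted ball sits below the braiding, and then use this overtwisted ball to re-route the transport of the dividing set and strip one nested circle off one of the two sides, at the cost of a single extra $\triangle^{2(m-1)}$ or $\triangle^{2(n-1)}$ (possibly a negative power, absorbed into the common $\triangle^N$); Corollary~\ref{Commutativity} then lets all the accumulated bypass triangles be slid freely past the remaining bypass attachments, so that the total exponent comes out to $-writhe(\mathcal{B})=2(m-1)(n-1)$ and the induction closes. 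I expect the crux to be exactly this stripping step: showing that the overtwisted disk supplied by the pushed-down bypass triangle lets one trivialize the homotopically trivial part of the pure-braid monodromy, which is also where the hypothesis that each side is a nested family of depth at most two is genuinely used, and I would expect the careful bookkeeping of Pontryagin signs under the re-routing to be the bulk of the work.
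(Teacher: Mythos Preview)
Your Pontryagin--Thom computation of $writhe(\mathcal{B})=-2(m-1)(n-1)$ is a correct sanity check, but it is not how the paper proceeds, and your proposed upgrade from homotopy to stable isotopy has a genuine gap. Pushing an \emph{entire} bypass triangle past the inducing isotopy gets you nowhere: if the triangle is supported away from the braiding annulus the push-down is the identity and you recover Corollary~\ref{Commutativity}; if it meets the annulus, the three pushed-down arcs no longer obviously form a triangle, and in any case the dividing set returns to $\Gamma$ so the braid you face afterward is unchanged. The sentence ``use this overtwisted ball to re-route the transport of the dividing set and strip one nested circle off'' is where the actual content would have to live, and no mechanism is given.

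The paper's argument is both shorter and more concrete, and it does not use the writhe calculation at all (except through Lemma~\ref{SimplePermA}). The key idea you are missing is to push down only the \emph{first} bypass $\sigma_\alpha$ of a triangle $\triangle_\alpha$, with $\alpha$ chosen to merge the two outer circles of the two nested families. After the push-down $\xi_{\tilde\Gamma,D^2_\epsilon,\gamma}\ast\sigma_\alpha\simeq\sigma_{\tilde\alpha}\ast\xi_{\Gamma',\Phi}$, the dividing set $\Gamma'$ has a single outermost circle, so the residual braid $\xi_{\Gamma',\Phi}$ factors (via Lemma~\ref{rounding}) as a rounding isotopy followed by a braid of the $m$ inner circles around the $n$ inner circles---now a depth-$1$ situation, hence an iterated instance of Lemma~\ref{SimplePermA} contributing $\triangle^{2m(n-1)}$. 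The leftover $\sigma_{\tilde\alpha}$ composed with the rounding is then \emph{pulled up} through a second, simpler depth-$1$ braid, contributing $\triangle^{2(1-n)}$ stably. Reassembling with the remaining $\sigma_{\alpha'}\ast\sigma_{\alpha''}$ and using Corollary~\ref{Commutativity} to commute the accumulated triangles gives $\xi\ast\triangle_\alpha\sim\triangle^{2(m-1)(n-1)}\ast\triangle_\alpha$, hence $\xi\sim\triangle^{2(m-1)(n-1)}$. No induction on $m+n$ is needed: one well-chosen bypass collapses the depth in one shot.
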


\begin{proof}
Let $\alpha\subset S^2\times\{1\}$ be an admissible arc as depicted in the left-hand side of Figure~\ref{braidLem2}(a). By Lemma~\ref{DesAscBypass}, if $\tilde\alpha$ is the push-down of $\alpha$, then $\xi_{\Gamma,\Phi(\tilde\Gamma,D^2_\epsilon,\gamma)}\ast\sigma_\alpha \simeq \sigma_{\tilde\alpha}\ast\xi_{\Gamma',\Phi}$, where $\Gamma'$ is obtained from $\Gamma$ by attaching a bypass along $\alpha$. We remark here that $\xi_{\Gamma,\Phi(\tilde\Gamma,D^2_\epsilon,\gamma)}$ and $\xi_{\Gamma',\Phi}$ are contact structures induced by the same isotopy, but are push-forwards of different contact structures on $S^2\times[0,1]$. Choose $\tilde\Gamma'\subset D^{2'}_\epsilon$ to be the $m$ isolated circles on the left and $\gamma'$ be an oriented loop as depicted in the right-hand side of Figure~\ref{braidLem2}(a). Let $\xi_{\tilde\Gamma',D^{2'}_\epsilon,\gamma'}$ be the contact structure induced by an isotopy which parallel transports $\tilde\Gamma'\subset D^{2'}_\epsilon$ along $\gamma'$. Then Lemma~\ref{rounding} implies that $\xi_{\Gamma',\Phi}$ is isotopic, relative to the boundary, to $\xi_{\tilde\Phi}\ast\xi_{\tilde\Gamma',D^{2'}_\epsilon,\gamma'}$, where $\tilde\Phi$ is induced by an isotopy that rounds the outmost dividing circle. An iterated application of Lemma~\ref{SimplePermA} implies that $\xi_{\tilde\Gamma',D^{2'}_\epsilon,\gamma'} \simeq \triangle^{2m(n-1)}$.

We next isotop the contact structure $\sigma_{\tilde\alpha}\ast\xi_{\tilde\Phi}$. Consider the $n$ isolated circles nested in a larger circle. Let $\tilde\Gamma''\subset D^{2''}_\epsilon$ be the leftmost circle among the $n$ circles and $\gamma''$ be an oriented loop as depicted in the right-hand side of Figure~\ref{braidLem2}(b). We pull up $\tilde\alpha$ through an isotopy which parallel transports $\tilde\Gamma''\subset D^{2''}_\epsilon$ along $\gamma''$, and observe that the pull-up of $\tilde\alpha$ is isotopic to $\alpha$. By using Lemma~\ref{DesAscBypass} one more time, we get the isotopy of contact structures $\sigma_{\tilde\alpha}\ast\xi_{\tilde\Phi} \simeq \xi_{\tilde\Gamma'',D^{2''}_\epsilon,\gamma''}\ast\sigma_\alpha$. It is left to determine the isotopy class of the contact structure $\xi_{\tilde\Gamma'',D^{2''}_\epsilon,\gamma''}$. Since $\gamma''$ is oriented counterclockwise, by applying Lemma~\ref{SimplePermA} $(n-1)$ times, we get a stable isotopy $\xi_{\tilde\Gamma'',D^{2''}_\epsilon,\gamma''} \sim \triangle^{2(1-n)}$, i.e., $\xi_{\tilde\Gamma'',D^{2''}_\epsilon,\gamma''}\ast\triangle^{2(n-1)}$ is isotopic to the $I$-invariant contact structure.

\begin{figure}[h]
    \begin{overpic}[scale=.2]{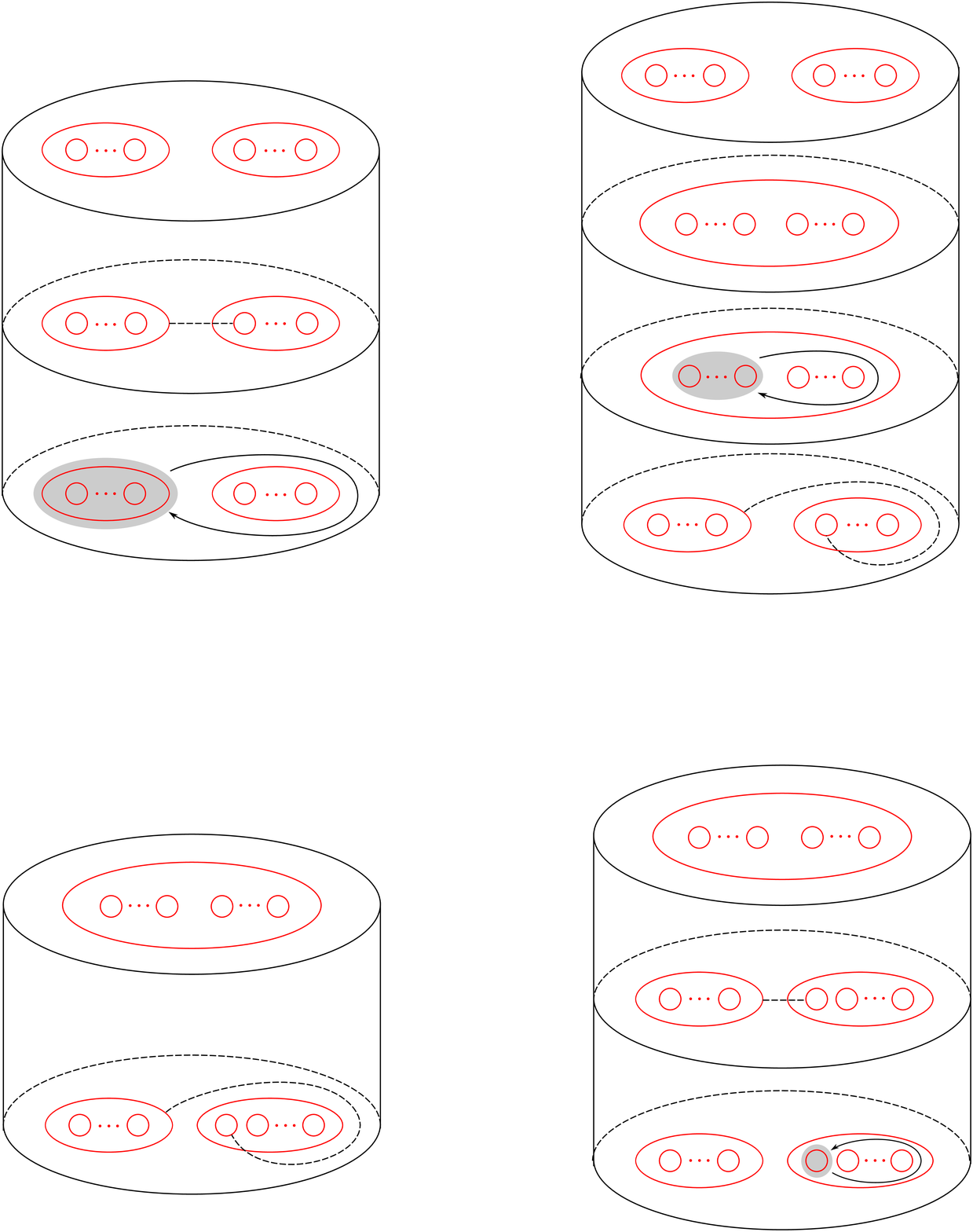}
    \put(38,47){(a)}
    \put(38,-4){(b)}
    \put(38,18){$\simeq$}
    \put(38,73){$\simeq$}
    \put(32,13){$\sigma_{\tilde\alpha}$}
    \put(80,10){$\xi_{\tilde\Gamma'',D^{2''}_\epsilon,\delta''}$}
    \put(80,24){$\sigma_\alpha$}
    \put(19,4){\tiny{$\tilde\alpha$}}
    \put(69,2){\tiny{$\gamma''$}}
    \put(62.5,19){\tiny{$\alpha$}}
    \put(32,64  ){$\xi_{\tilde\Gamma,D^2_\epsilon,\gamma}$}
    \put(32,78){$\triangle_\alpha$}
    \put(15,56.2){\tiny{$\gamma$}}
    \put(15,74.5){\tiny{$\alpha$}}
    \put(80,62){$\sigma_{\tilde\alpha}$}
    \put(80,74){$\xi_{\tilde\Gamma',D^{2'}_\epsilon,\gamma'}$}
    \put(80,86){$\sigma_{\alpha'} \ast \sigma_{\alpha''}$}
    \put(68,52.8){\tiny{$\tilde\alpha$}}
    \put(62.5,65.8){\tiny{$\gamma'$}}
    \put(8,55.3){\tiny{$\tilde\Gamma$}}
    \put(57.6,66.5){\tiny{$\tilde\Gamma'$}}
    \put(65.7,2.5){\tiny{$\tilde\Gamma''$}}
    \end{overpic}
    \newline
    \caption{(a) Pushing down the bypass attachment $\sigma_\alpha$. (b) Pulling up the bypass attachment $\sigma_{\tilde\alpha}$.}
    \label{braidLem2}
\end{figure}

To summarize what we have done so far, we have the following (stable) isotopies of contact structures:
\begin{align*}
\xi_{\tilde\Gamma,D^2_\epsilon,\gamma}\ast\triangle_\alpha
&=\xi_{\Gamma,\Phi(\tilde\Gamma,D^2_\epsilon,\gamma)}\ast\sigma_\alpha\ast\sigma_{\alpha'}\ast\sigma_{\alpha''} \\
&\simeq\sigma_{\tilde\alpha}\ast\xi_{\Gamma',\Phi}\ast\sigma_{\alpha'}\ast\sigma_{\alpha''} \\
&\simeq\sigma_{\tilde\alpha}\ast\xi_{\tilde\Phi}\ast\xi_{\tilde\Gamma',D^{2'}_\epsilon,\gamma'}\ast\sigma_{\alpha'}\ast\sigma_{\alpha''} \\
&\simeq\sigma_{\tilde\alpha}\ast\xi_{\tilde\Phi}\ast\triangle^{2m(n-1)}\ast\sigma_{\alpha'}\ast\sigma_{\alpha''} \\
&\simeq\xi_{\tilde\Gamma'',D^{2''}_\epsilon,\gamma''}\ast\sigma_\alpha\ast\triangle^{2m(n-1)}\ast\sigma_{\alpha'}\ast\sigma_{\alpha''} \\
&\sim\triangle^{2(1-n)}\ast\sigma_\alpha\ast\triangle^{2m(n-1)}\ast\sigma_{\alpha'}\ast\sigma_{\alpha''} \\
&\simeq\triangle^{2(m-1)(n-1)}\ast\sigma_\alpha\ast\sigma_{\alpha'}\ast\sigma_{\alpha''} \\
&=\triangle^{2(m-1)(n-1)}\ast\triangle_\alpha.
\end{align*}

Note that the third equation from the bottom is only a stable isotopy so that the (possibly) negative power of the bypass triangle attachment makes sense. See Definition~\ref{WeakIso}. We will use the same trick in the proof of the following Proposition~\ref{PermEquToBTs} without further mentioning. Hence by definition, $\xi_{\tilde\Gamma,D^2_\epsilon,\gamma}$ is stably isotopic to $\triangle^{2(m-1)(n-1)}$ as desired.
\end{proof}

We now completely classify contact structures on $S^2\times[0,1]$ induced by pure braids of the dividing set.

\begin{prop} \label{PermEquToBTs}
If $(S^2\times[0,1],\xi_{\tilde\Gamma,D^2_\epsilon,\gamma})$ is a contact manifold with contact structure induced by a pure braid of the dividing set, then $\xi_{\tilde\Gamma,D^2_\epsilon,\gamma}$ is stably isotopic to $(S^2\times[0,1],\triangle^{l})$ for some $l\in\mathbb{N}$.
\end{prop}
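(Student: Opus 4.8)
The plan is to reduce an arbitrary pure braid of the dividing set to a composition of the two model braids treated in Lemmas~\ref{SimplePermA} and~\ref{SimplePermBCD}, by a double induction on the depth of $\tilde\Gamma$ and on the depth of the packet of dividing circles encircled by $\gamma$, while keeping track of the accumulated powers of $\triangle$. Stable isotopy (Definition~\ref{WeakIso}) is exactly the equivalence under which this bookkeeping closes up, since it permits the negative powers of $\triangle$ that genuinely occur in the intermediate steps (as already in the proof of Lemma~\ref{SimplePermBCD}).

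The first step is to decompose the loop $\gamma$. Since the annulus $A(\gamma)$ is disjoint from $\Gamma\setminus\tilde\Gamma$, the loop $\gamma$ lies in a single component $R$ of the planar surface $S^2\setminus(\Gamma\setminus\tilde\Gamma)$, and $\pi_1(R,y)$ is free, generated by small loops $\gamma_1,\dots,\gamma_k$, one for each boundary circle $c_i$ of $R$; the circle $c_i$ bounds a disk $D_i\subset S^2$ meeting $\Gamma\setminus\tilde\Gamma$ in a nested packet $P_i$, and $\gamma_i$ encircles $P_i$. Writing $\gamma$ as a word in the $\gamma_i^{\pm1}$, and noting that concatenation of loops corresponds to stacking the induced contact structures on $S^2\times[0,1]$ (cut at the intermediate $t$-levels, where $\tilde\Gamma$ has returned to $D^2_\epsilon$ and the dividing sets match, using Lemma~\ref{rounding} to align), while by the remark following Definition~\ref{braidDef} the induced contact structure depends only on the isotopy class of $\gamma$, we obtain that $\xi_{\tilde\Gamma,D^2_\epsilon,\gamma}$ is isotopic relative to the boundary to the corresponding composition of the $\xi_{\tilde\Gamma,D^2_\epsilon,\gamma_i}$ (a $\gamma_i^{-1}$ contributing the mirror braid, whose stable isotopy class is a negative power of $\triangle$). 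Hence it suffices to treat the case where $\gamma$ encircles a single nested packet $P$.

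The second step is to peel off the nesting. If $\tilde\Gamma$ has depth $\ge 2$, pick an outermost circle $c_0\subset\tilde\Gamma$ with interior packet $\tilde\Gamma_{\mathrm{in}}$; one shows, by Lemma~\ref{rounding} together with the push-down/pull-up Lemma~\ref{DesAscBypass} used to move past the braid the auxiliary bypasses needed to make the split visible and Corollary~\ref{Commutativity} used to slide the resulting $\triangle$-factors to the top, that the transport of $\tilde\Gamma$ along $\gamma$ factors, up to isotopy relative to the boundary, as the transport of $c_0$ followed by the transport of $\tilde\Gamma_{\mathrm{in}}$ inside the image disk, lowering the depth of the transported multicurve by one. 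A symmetric peeling applies to $P$. This is precisely the mechanism of the proof of Lemma~\ref{SimplePermBCD}, now run inductively: at the bottom of the double induction $\tilde\Gamma$ and $P$ are single circles, which is Lemma~\ref{SimplePermA}, and the first inductive layer is Lemma~\ref{SimplePermBCD}. Assembling, each peeling contributes a definite power of $\triangle$ supplied by one of these two lemmas, together with auxiliary bypass attachments that are trivial in the local model and so can be discarded by Lemma~\ref{Triviality} (or recombine into, and cancel against, the auxiliary bypass triangles allowed by stable isotopy); Corollary~\ref{Commutativity} slides all $\triangle$'s to the $S^2\times\{1\}$ end, giving $\xi_{\tilde\Gamma,D^2_\epsilon,\gamma}\sim\triangle^l$ with $l$ an explicit integer that is a polynomial in the packet sizes generalizing the $2(m-1)(n-1)$ of Lemma~\ref{SimplePermBCD}, and nonnegative by the computation of the writhe of the underlying monotone braid.

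I expect the main obstacle to be the bookkeeping inside the peeling step: verifying at each layer that the auxiliary arcs of attachment can be chosen disjoint from the deeper layers, so that the splitting and the commutations are legitimate, and that the auxiliary bypasses genuinely remain trivial — essentially an inductive upgrade of the explicit arc choices made in the proof of Lemma~\ref{SimplePermBCD}. A secondary point is pinning down that the final exponent lies in $\mathbb{N}$ rather than $\mathbb{Z}$; this is handled by isolating the clockwise generators in the loop decomposition and by the writhe computation, with stable isotopy absorbing the negative powers of $\triangle$ that appear along the way.
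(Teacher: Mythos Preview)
Your overall strategy is the paper's --- reduce to the model braids of Lemmas~\ref{SimplePermA} and~\ref{SimplePermBCD} by lowering depth, using push-down and the commutativity of $\triangle$ --- but the organization differs enough to be worth comparing.

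Your first step is unnecessary: by Definition~\ref{braidDef} the loop $\gamma$ is \emph{embedded}, so it already bounds a single disk containing a single packet $\tilde\Gamma'$ (the components of $\Gamma$ in that disk, outside $A(\gamma)$), and the paper works with this $\tilde\Gamma'$ directly rather than decomposing $\gamma$ further.

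More substantively, the paper avoids your double induction. Instead of peeling one nesting layer at a time, it chooses in one shot pairwise disjoint admissible arcs $\alpha_1,\dots,\alpha_k$, the first batch inside $D^2_\epsilon$ and the rest inside the disk bounded by $\gamma$ outside $A(\gamma)$, so that attaching bypasses along them drops the depth of \emph{both} $\tilde\Gamma$ and $\tilde\Gamma'$ to at most~$2$. The observation that replaces all your inductive bookkeeping is that these arcs lie in regions \emph{fixed} by the time-$1$ map $\phi_1$ (which is supported in $A(\gamma)\setminus D^2_\epsilon$), so each $\alpha_i$ coincides with its own push-down. One application of Lemma~\ref{DesAscBypass} then gives
\[
\xi_{\tilde\Gamma,D^2_\epsilon,\gamma}\ast\sigma_{\alpha_1}\ast\cdots\ast\sigma_{\alpha_k}\;\simeq\;\sigma_{\alpha_1}\ast\cdots\ast\sigma_{\alpha_k}\ast\xi_\Phi,
\]
where $\xi_\Phi$ is now a composition of the model braids, hence $\sim\triangle^l$. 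The paper closes up by completing each $\sigma_{\alpha_i}$ to $\triangle_{\alpha_i}$, exploiting the disjointness of the $\alpha_i$ to commute freely, and collecting the $\triangle$'s via Corollary~\ref{Commutativity} --- a short algebraic chain in place of an induction. Your ``factoring the transport of $\tilde\Gamma$ as transport of $c_0$ followed by transport of $\tilde\Gamma_{\mathrm{in}}$'' is not literally well-posed (one cannot move the outermost circle without dragging along what is inside it); what makes it work is precisely attaching such depth-reducing bypasses first, which is what the paper does explicitly and all at once.
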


\begin{proof}
Recall that $\tilde\Gamma\subset D^2_\epsilon$ is a codimension 0 submanifold of $\Gamma_{S^2\times\{0\}}$, and $\gamma$ is an oriented loop in the complement of $\Gamma_{S^2\times\{0\}}$ as in Definition~\ref{braidDef}. Let $\tilde\Gamma'$ be the union of components of $\Gamma_{S^2\times\{0\}}$ contained in a disk bounded by $\gamma$ and outside of $A(\gamma)$. We may choose the disk so that $-\gamma$ is the oriented boundary. Since the contact structure $\xi_{\tilde\Gamma,D^2_\epsilon,\gamma}$ is induced by a pure braid of the dividing set, we have $\Gamma_{S^2\times\{0\}}=\Gamma_{S^2\times\{1\}}$. Hence we also view $\tilde\Gamma$ and $\tilde\Gamma'$ as dividing sets on $S^2\times\{1\}$. Choose pairwise disjoint admissible arcs $\alpha_1,\alpha_2,\cdots,\alpha_r,\alpha_{r+1},\cdots,\alpha_k$ on $S^2\times\{1\}$ such that the following conditions hold:

\be
\item{$\alpha_1,\alpha_2,\cdots,\alpha_{r-1}$ are admissible arcs contained in $D^2_\epsilon$ such that by attaching bypasses along these arcs, the depth of $\tilde\Gamma$ becomes at most 2.}
\item{$\alpha_r,\alpha_{r+1},\cdots,\alpha_k$ are admissible arcs contained in the disk bounded by $\gamma$ and outside of $A(\gamma)$ such that by attaching bypasses along these arcs, the depth of $\tilde\Gamma'$ becomes at most 2.}
\ee

Observe that we choose $\alpha_1,\alpha_2,\cdots,\alpha_k$ such that the isotopy class of each $\alpha_i$ is invariant under the time-1 map $\phi_1$ which is supported in $A(\gamma) \setminus D^2_\epsilon$. Hence, by abuse of notation, we do not distinguish $\alpha_i$ and its push-down through $\phi_t(\tilde\Gamma,D^2_\epsilon,\gamma)$. By Lemma~\ref{DesAscBypass},  we have the isotopy of contact structures $\xi_{\tilde\Gamma,D^2_\epsilon,\gamma}\ast\sigma_{\alpha_1}\ast\cdots\ast\sigma_{\alpha_k} \simeq \sigma_{\alpha_1}\ast\cdots\ast\sigma_{\alpha_k}\ast\xi_\Phi$, where $\xi_\Phi$ is the contact structure induced by a finite composition of special pure braids of the dividing set considered in Lemma~\ref{SimplePermA} and Lemma~\ref{SimplePermBCD}, Therefore $\xi_\Phi$ is stable isotopic to a power of the bypass triangle attachment, say $\triangle^l$ for some $l\in\mathbb{N}$. To summarize, we have the following (stable) isotopies of contact structures, relative to the boundary.

\begin{align*}
\xi_{\tilde\Gamma,D^2_\epsilon,\gamma}\ast\triangle^k
&\simeq\xi_{\tilde\Gamma,D^2_\epsilon,\gamma}\ast\triangle_{\alpha_1}\ast\cdots\ast\triangle_{\alpha_k} \\ &=\xi_{\tilde\Gamma,D^2_\epsilon,\gamma}\ast(\sigma_{\alpha_1}\ast\sigma_{\alpha'_1}\ast\sigma_{\alpha''_1})\ast\cdots\ast(\sigma_{\alpha_k}\ast\sigma_{\alpha'_k}\ast\sigma_{\alpha''_k}) \\
&\simeq(\xi_{\tilde\Gamma,D^2_\epsilon,\gamma}\ast\sigma_{\alpha_1}\ast\cdots\ast\sigma_{\alpha_k})\ast(\sigma_{\alpha'_1}\ast\sigma_{\alpha''_1})\ast\cdots\ast(\sigma_{\alpha'_k}\ast\sigma_{\alpha''_k}) \\
&\simeq(\sigma_{\alpha_1}\ast\cdots\ast\sigma_{\alpha_k}\ast\xi_{\Phi})\ast(\sigma_{\alpha'_1}\ast\sigma_{\alpha''_1})\ast\cdots\ast(\sigma_{\alpha'_k}\ast\sigma_{\alpha''_k}) \\
&\sim(\sigma_{\alpha_1}\ast\cdots\ast\sigma_{\alpha_k}\ast\triangle^l)\ast(\sigma_{\alpha'_1}\ast\sigma_{\alpha''_1})\ast\cdots\ast(\sigma_{\alpha'_k}\ast\sigma_{\alpha''_k}) \\
&\simeq\triangle^l\ast(\sigma_{\alpha_1}\ast\sigma_{\alpha'_1}\ast\sigma_{\alpha''_1})\ast\cdots\ast(\sigma_{\alpha_k}\ast\sigma_{\alpha'_k}\ast\sigma_{\alpha''_k}) \\
&=\triangle^l\ast\triangle^k.
\qedhere
\end{align*}

Hence $\xi_{\tilde\Gamma,D^2_\epsilon,\gamma}$ is stably isotopic to $\triangle^l$ by definition.

\end{proof}

To conclude this section, we prove the following technical result which asserts that under certain assumptions and up to possible bypass triangle attachments, one can separate two bypasses.

\begin{prop} \label{disjointness}
Let $(S^2,\Gamma)$ be a convex sphere with dividing set $\Gamma$ and $\alpha\subset(S^2,\Gamma)$ be an admissible arc such that the bypass attachment $\sigma_\alpha$ increases $\#\Gamma$ by 2. Suppose that $(S^2,\Gamma')$ is the new convex sphere obtained by attaching $\sigma_\alpha$ to $(S^2,\Gamma)$ and suppose $\beta\subset(S^2,\Gamma')$ is another admissible arc such that the bypass attachment $\sigma_\beta$ decreases $\#\Gamma'$ by 2. Then there exists an admissible arc $\tilde\beta\subset(S^2,\Gamma)$ disjoint from $\alpha$, a map $\Phi:S^2\times[0,1] \to S^2\times[0,1]$ induced by an isotopy, and an integer $l\in\mathbb{N}$ such that $\sigma_\alpha\ast\sigma_\beta \sim \sigma_\alpha\ast\sigma_{\tilde\beta}\ast\triangle^l\ast\xi_\Phi$ relative to the boundary.
\end{prop}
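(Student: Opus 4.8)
The plan is to read off from the hypothesis $\#\Gamma\mapsto\#\Gamma+2$ a local model for $\sigma_\alpha$, to classify how $\beta$ meets the two dividing circles $\sigma_\alpha$ creates, and then to transport $\beta$ away from those circles by an isotopy-induced contact structure; the transporting isotopy decomposes into a braiding of the dividing set, which Proposition~\ref{PermEquToBTs} replaces up to stable isotopy by a power of $\triangle$, and a residual isotopy, which becomes the factor $\xi_\Phi$.

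\emph{Local model and the easy case.} The hypothesis that $\sigma_\alpha$ raises $\#\Gamma$ by $2$ pins down a local picture: we may assume $\alpha$ meets a single component $c$ of $\Gamma$ in all three points and that $\sigma_\alpha$ replaces the three strands of $c$ by two small circles. Thus we fix a disk $D\subset S^2$ with $\alpha\subset D$, with $D\cap\Gamma$ a single arc of $c$, with $D\cap\Gamma'$ that arc together with two small circles $c_1,c_2\subset D$, and with $\Gamma=\Gamma'$ outside $D$. Since the three crossings of $\beta$ with $\Gamma'$ lie on three distinct components, and any interior crossing of $\beta$ with a circle bounding a disk disjoint from the rest of $\Gamma'$ would force a second crossing, each of $c_1,c_2$ that $\beta$ meets is met at an endpoint of $\beta$; let $k\in\{0,1,2\}$ count the $c_i$ occurring among $\beta$'s three components. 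If $k=0$, isotope $\beta$ through admissible arcs out of $D$; then $\tilde\beta:=\beta$ is admissible on $(S^2,\Gamma)$, the attachments $\sigma_\alpha$ and $\sigma_{\tilde\beta}$ have disjoint supports, and $\sigma_\alpha\ast\sigma_\beta\simeq\sigma_\alpha\ast\sigma_{\tilde\beta}$ rel boundary, so the claim holds with $l=0$ and $\Phi=\mathrm{id}$.

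\emph{The case $k\in\{1,2\}$.} Here I would interpose between $\sigma_\alpha$ and $\sigma_\beta$ a contact structure $\xi_\Psi$ on $(S^2\times[0,1],\Gamma')$ induced by an isotopy that moves $c_1$ (and $c_2$ when $k=2$) out of $D$, carrying $\beta$ to an admissible arc $\tilde\beta$ disjoint from $\alpha$, so that $\sigma_\alpha\ast\sigma_\beta\simeq\sigma_\alpha\ast\xi_\Psi\ast\sigma_{\tilde\beta}$. Commuting the isotopy-induced factor up past $\sigma_{\tilde\beta}$ by Lemma~\ref{DesAscBypass}, and noting that, since $\Psi$ is supported away from $\tilde\beta$, the push-down of $\tilde\beta$ is again $\tilde\beta$, gives $\sigma_\alpha\ast\sigma_\beta\simeq\sigma_\alpha\ast\sigma_{\tilde\beta}\ast\xi_{\Psi}$. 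It remains to analyze $\xi_\Psi$: carrying $c_1,c_2$ to their standard positions requires transporting them around a loop in the complement of $\Gamma'$, so $\xi_\Psi$ factors, up to isotopy, as a pure braid of the dividing set followed by a residual isotopy-induced contact structure $\xi_\Phi$. By Proposition~\ref{PermEquToBTs} the pure-braid factor is stably isotopic to $\triangle^{l}$ for some $l\in\mathbb{N}$, and by Corollary~\ref{Commutativity} this $\triangle^{l}$ commutes with $\sigma_\alpha$ and $\sigma_{\tilde\beta}$; inserting auxiliary bypass triangles so that the intermediate powers make sense in the sense of Definition~\ref{WeakIso}, we conclude $\sigma_\alpha\ast\sigma_\beta\sim\sigma_\alpha\ast\sigma_{\tilde\beta}\ast\triangle^{l}\ast\xi_\Phi$.

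The main obstacle lies entirely in this last case: one must ensure that the standardizing isotopy $\Psi$, the push-downs produced by Lemma~\ref{DesAscBypass}, and the residual $\Phi$ are chosen compatibly, so that the two sides of the asserted relation carry the \emph{same} multicurves on $S^2\times\{0,1\}$ --- in particular that $\tilde\beta$, which is forced to merge components of $\Gamma$ only, realizes together with $\sigma_\alpha$ and the isotopies the same terminal dividing set as $\sigma_\alpha\ast\sigma_\beta$. This rests on the fact that $c_1,c_2$ are inessential circles confined to $D$, so that merging them into old components can be traded, up to isotopy of multicurves, for a merge among components of $\Gamma$; but making this precise, while keeping the bookkeeping of push-downs and pull-ups consistent and collecting all the braiding into a single $\triangle^{l}$ and a single $\xi_\Phi$, is where the real care is required.
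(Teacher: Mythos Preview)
Your overall intuition --- insert an isotopy-induced layer between $\sigma_\alpha$ and $\sigma_\beta$, then trade that layer for $\triangle^l\ast\xi_\Phi$ via Proposition~\ref{PermEquToBTs} --- is the right mechanism, and it is exactly the engine the paper uses. But the organization by the invariant $k=\#\{c_i:\beta\cap c_i\neq\emptyset\}$ misses the actual obstruction, and this leads to a genuine gap already in your ``easy'' case.

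\textbf{The $k=0$ case is not trivial.} Even when $\beta$ is disjoint from $c_1,c_2$, the sub-arcs of $\beta$ lying in the regions of $S^2\setminus\Gamma'$ adjacent to $c_1,c_2$ can wind around those circles. Concretely, the half of $\beta$ between its middle crossing and an endpoint lies in a single planar region $R$ of $S^2\setminus\Gamma'$; if $c_1$ (say) is a boundary component of $R$, there are infinitely many isotopy classes of arcs in $R$ joining the two relevant boundary components, distinguished by how they go around $c_1$. An isotopy of $\beta$ through admissible arcs cannot cross $c_1$, so you cannot in general push $\beta$ out of $D$ without changing its isotopy class. Thus the reduction to $\tilde\beta=\beta$ with $l=0$, $\Phi=\mathrm{id}$ fails.

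\textbf{The $k\geq1$ step is also underspecified.} Your isotopy $\Psi$ carries $c_1$ out of $D$ and hence carries the endpoint of $\beta$ on $c_1$ along with it; the pull-up $\tilde\beta$ then has an endpoint on $\psi_1(c_1)$. But when you push $\xi_\Psi$ back past $\sigma_{\tilde\beta}$, the push-down of $\tilde\beta$ is $\psi_1^{-1}(\tilde\beta)=\beta$, not $\tilde\beta$ --- the claim ``$\Psi$ is supported away from $\tilde\beta$'' cannot hold, since $\Psi$ moves $c_1$ and $\tilde\beta$ ends on $\psi_1(c_1)$. So the displayed identity $\sigma_\alpha\ast\sigma_\beta\simeq\sigma_\alpha\ast\sigma_{\tilde\beta}\ast\xi_\Psi$ does not follow.

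\textbf{What the paper does instead.} The paper does not sort by which new circles $\beta$ meets; it works on $(S^2,\Gamma')$ with the \emph{anti-bypass arc} $\delta$ of $\sigma_\alpha$ (so that ``$\tilde\beta$ disjoint from $\alpha$ on $(S^2,\Gamma)$'' is equivalent to ``$\tilde\beta$ disjoint from $\delta$ on $(S^2,\Gamma')$'') and reduces the geometric intersection number $\#(\beta\cap\delta_1)$ step by step. First it eliminates each bigon cobounded by $\beta$ and $\delta_1$: enclose the dividing curves trapped in a minimal bigon in a small disk, parallel-transport that disk along a loop $\gamma$ crossing $\beta$ once, and pull $\beta$ up through the resulting $\xi_{\tilde\Gamma,D^2_\epsilon,\gamma}$; the pulled-up arc now cobounds a \emph{trivial} bigon, which an ordinary isotopy removes. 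Proposition~\ref{PermEquToBTs} converts each such braid into some $\triangle^n$. Once no bigons remain, the paper splits into two cases according to whether $\beta$ meets the three dividing circles produced by $\sigma_\alpha$; in each case a further, explicitly chosen loop $\gamma$ (built from sub-arcs of $\beta$ and $\delta_1$) lets one pull $\beta$ up so as to create a trivial bigon or trivial triangle with $\delta_1$, again reducing $\#(\beta\cap\delta_1)$. Iterating finitely many times makes $\tilde\beta$ disjoint from $\delta_1$ (and by symmetry from $\delta$), and the accumulated braids and inverse isotopies assemble into the single $\triangle^l\ast\xi_\Phi$ of the statement.

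The moral is that the obstruction is not ``which $c_i$ does $\beta$ touch'' but ``how does $\beta$ intersect $\delta$,'' and that each intersection must be removed by its own carefully chosen pure braid rather than by one global isotopy of the new circles.
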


\begin{proof}
Let $\delta$ be the arc of anti-bypass attachment to $\sigma_\alpha$ contained in $(S^2,\Gamma')$ as discussed in Remark~\ref{anibypassarc}. Then $\delta$ intersects $\Gamma'$ in three points $\{p_1,p_2,p_3\}$ as depicted in Figure~\ref{antibypass}(b). Let $\delta_1$ and $\delta_2$ be subarcs of $\delta$ from $p_1$ to $p_2$ and from $p_2$ to $p_3$ respectively. Observe that, in order to find an admissible arc $\tilde\beta \subset (S^2,\Gamma)$ which is disjoint from $\alpha$ and satisfy all the conditions in the lemma, it suffices to find an admissible arc on $(S^2,\Gamma')$, which we still denote by $\tilde\beta$, and which is disjoint from $\delta$ and also satisfies the conditions in the lemma. In fact, by symmetry, we only need $\tilde\beta$ to be disjoint from $\delta_1$. Without loss of generality, we can assume that $\beta$ intersects $\delta$ transversely and the intersection points are different from $p_1$, $p_2$ and $p_3$.\\

\begin{figure}[h]
    \begin{overpic}[scale=.35]{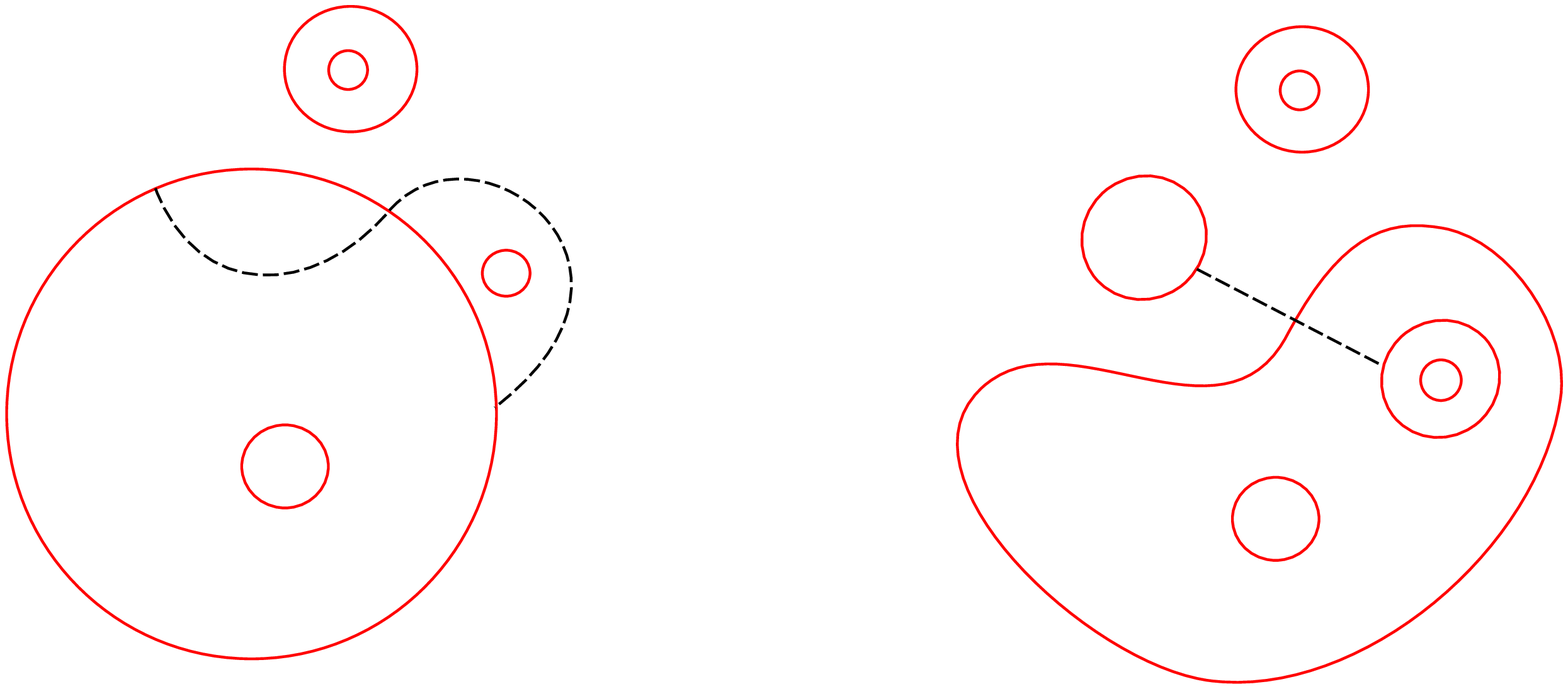}
    \put(34.3,31){\tiny{$\alpha$}}
    \put(85,23){\tiny{$\delta$}}
    \put(73.7,27){\tiny{$p_1$}}
    \put(80,22){\tiny{$p_2$}}
    \put(88,17.9){\tiny{$p_3$}}
    \put(11,-5){(a)}
    \put(83,-5){(b)}
    \end{overpic}
    \newline
    \caption{(a) The convex sphere $(S^2,\Gamma)$ with an admissible arc $\alpha$. (b) The convex sphere $(S^2,\Gamma')$ obtained by attaching a bypass along $\alpha$, where $\delta$ is the arc of the anti-bypass attachment.}
    \label{antibypass}
\end{figure}

\noindent \emph {Claim: Up to isotopy and possibly a finite number of bypass triangle attachments, one can arrange so that $\beta$ and $\delta_1$ do not cobound a bigon $B$ on $S^2$ as depicted in Figure~\ref{cancelbigon}(a)}.\\

\begin{figure}[h]
    \begin{overpic}[scale=.23]{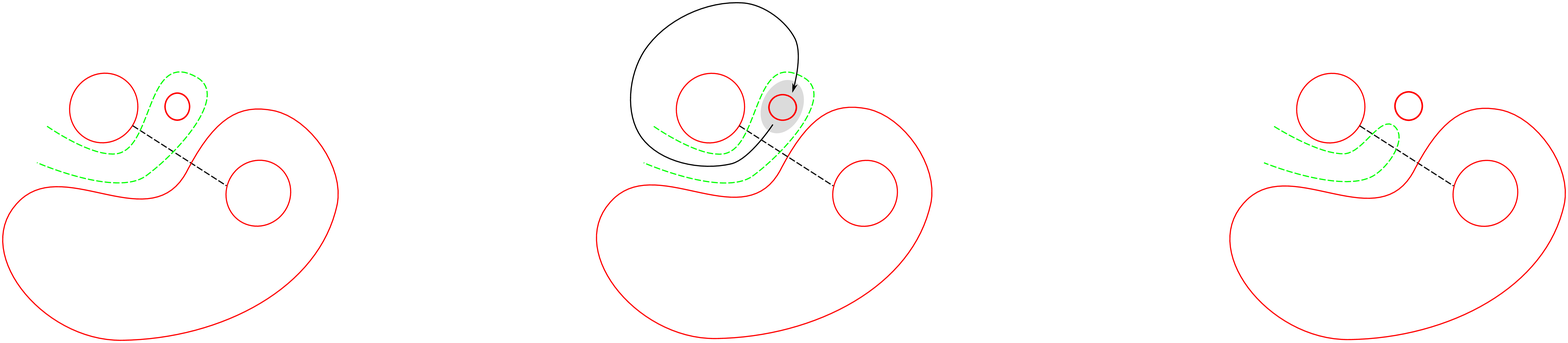}
    \put(8.5,11.5){\tiny{$\delta_1$}}
    \put(13.5,16.5){\tiny{$\beta$}}
    \put(38.5,16){\tiny{$\gamma$}}
    \put(79.3,13){\tiny{$\tilde\beta$}}
    \put(9,-5){(a)}
    \put(48,-5){(b)}
    \put(88,-5){(c)}
    \end{overpic}
    \newline
    \caption{(a) The admissible arc $\beta$ together with $\delta_1$ bound a minimal bigon, which contains other components of the dividing set in the interior. (b) Choose a disk $D^2_\epsilon$ containing all the dividing sets $\tilde\Gamma$ in the bigon and an oriented loop $\gamma$ so that it intersects $\beta$ in exactly one point. (c) The pull-up of $\beta$ through the contact structure $\xi_{\tilde\Gamma,D^2_\epsilon,\gamma}$ bounds a trivial bigon with $\delta_1$.}
    \label{cancelbigon}
\end{figure}

To verify the claim, note that if $B$ is a trivial bigon, i.e., it contains no component of the dividing set in the interior, then we can easily isotop $\beta$ to eliminate $B$. If otherwise, we consider a minimal bigon bounded by $\beta$ and $\delta_1$ in the sense that the interior of the bigon does not intersect with $\beta$. Take a disk $D^2_\epsilon \subset B$ containing all components of the dividing set $\tilde\Gamma$ in $B$, namely, $\Gamma' \cap D^2_\epsilon=\tilde\Gamma$ and $\Gamma'\cap(B \setminus D^2_\epsilon)=\emptyset$. By our assumption, the bypass attachment $\sigma_\beta$ decreases $\#\Gamma'$ by 2, so $\beta$ must intersect $\Gamma'$ in three points which are contained in three different connected components of $\Gamma'$ respectively. One can find an oriented loop $\gamma:[0,1] \to S^2\setminus\Gamma'$ with $\gamma(0)=\gamma(1) \in D^2_\epsilon$ such that $\gamma$ intersects $\beta$ in one point. Orient $\gamma$ in such a way that it goes from $\gamma\cap\beta$ to $\gamma(1)$ in the interior of $B$ as depicted in Figure~\ref{cancelbigon}(b). Suppose that $\Phi:S^2\times[0,1] \to S^2\times[0,1]$ is induced by an isotopy $\phi_t$ which parallel transports $D^2_\epsilon$ along $\gamma$. By pulling up the the bypass attachment $\sigma_\beta$ through $\xi_{\Gamma',\Phi}$, we get the following isotopy of contact structures (cf. proof of Lemma~\ref{SimplePermBCD}):
\begin{align*}
\sigma_\beta \ast \xi_{\Gamma'',\Phi(D^2_\epsilon,\gamma)} \simeq \xi_{\Gamma',\Phi(\tilde\Gamma,D^2_\epsilon,\gamma)} \ast \sigma_{\tilde\beta}
\end{align*}
where $\Gamma''$ is obtained from $\Gamma'$ by attaching a bypass along $\beta$, and $\tilde\beta$ is the pull-up of $\beta$ which is isotopic to the one depicted in Figure~\ref{cancelbigon}(c).

Since $\tilde\beta$ and $\delta_1$ cobound a trivial bigon, a further isotopy of $\tilde\beta$ will eliminate the bigon so that $\beta'$ does not intersect $\delta_1$ in this local picture. By Proposition~\ref{PermEquToBTs}, the contact structure $\xi_{\Gamma',\Phi(\tilde\Gamma,D^2_\epsilon,\gamma)}$ is stably isotopic to $\triangle^n$ for some $n\in\mathbb{N}$. Define $\Phi^{-1}:S^2\times[0,1] \to S^2\times[0,1]$ by $(x,t) \mapsto (\phi^{-1}_t(x),t)$, then it is easy to see that $\xi_{\Gamma'',\Phi(D^2_\epsilon,\gamma)}\ast\xi_{\Gamma'',\Phi^{-1}(D^2_\epsilon,\gamma)}$ is isotopic, relative to the boundary, to an $I$-invariant contact structure. Since we will use this trick many times, we simply write $\xi_{\Phi^{-1}}$ for $\xi_{\Gamma'',\Phi^{-1}(D^2_\epsilon,\gamma)}$ when there is no confusion. To summarize, we have
\begin{align*}
\sigma_\beta &\simeq \xi_{\Gamma',\Phi(\tilde\Gamma,D^2_\epsilon,\gamma)} \ast \sigma_{\tilde\beta} \ast \xi_{\Gamma'',\Phi^{-1}(D^2_\epsilon,\gamma)}\\
             &\sim \triangle^n \ast \sigma_{\tilde\beta} \ast \xi_{\Gamma'',\Phi^{-1}(D^2_\epsilon,\gamma)}\\
             &\simeq \sigma_{\tilde\beta} \ast \triangle^n \ast \xi_{\Gamma'',\Phi^{-1}(D^2_\epsilon,\gamma)}
\end{align*}
By applying the above argument finitely many times, we can eliminate all bigons bounded by $\beta$ and $\delta_1$. Hence the claim is proved.\\

Let us assume that $\beta$ intersects $\delta_1$ nontrivially, and $\beta$ and $\delta_1$ do not cobound any bigon on $S^2$. We consider the following two cases separately.\\

\noindent
\emph{Case 1.} Suppose $\beta$ does not intersect any of the three components of the dividing set generated by the bypass attachment $\sigma_\alpha$. Let $\Gamma_1$, $\Gamma_2$ and $\Gamma_3$ be the three dividing circles which intersect with $\beta$.
\begin{figure}[h]
    \begin{overpic}[scale=.26]{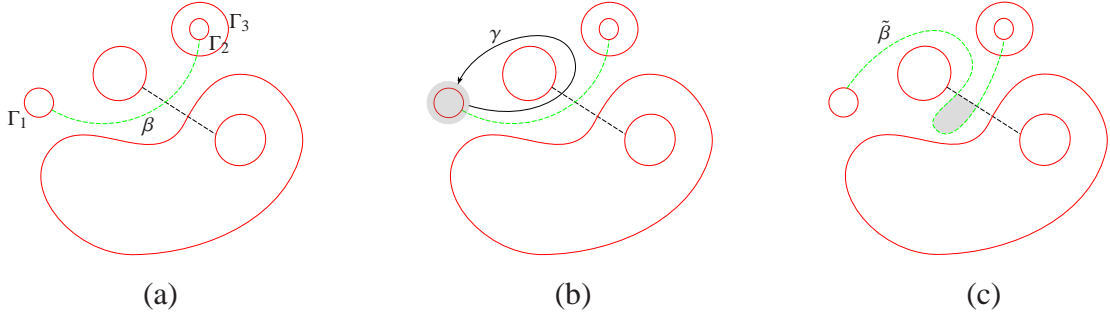}
    \put(-1.5,12){\tiny{$\Gamma_1$}}
    \put(17.1,19){\tiny{$\Gamma_2$}}
    \put(19,21){\tiny{$\Gamma_3$}}
    \put(11,11.5){\tiny{$\beta$}}
    \put(43,20){\tiny{$\gamma$}}
    \put(79,20){\tiny{$\tilde\beta$}}
    \put(11,-4.5){(a)}
    \put(49,-4.5){(b)}
    \put(87,-4.5){(c)}
    \end{overpic}
    \newline
    \caption{(a) The convex sphere $(S^2,\Gamma')$ with an admissible arc $\beta$ intersecting $\delta_1$ in exactly one point. (b) Choose a disk $D^2_\epsilon$ containing $\Gamma_1$ and an oriented loop $\gamma$, along which we apply the isotopy. (c) The pull-up of $\beta$ through the contact structure $\xi_{\Gamma_1,D^2_\epsilon,\gamma}$ bounds a trivial bigon with $\delta_1$.}
    \label{cancelintersection1}
\end{figure}
If $\beta$ intersects $\delta_1$ in exactly one point as depicted in Figure~\ref{cancelintersection1}(a), then we choose a disk $D^2_\epsilon \supset \Gamma_1$ and an oriented loop $\gamma$ in the complement of the dividing set as depicted in Figure~\ref{cancelintersection1}(b) such that $\sigma_\beta \simeq \xi_{\Gamma',\Phi(\Gamma_1,D^2_\epsilon,\gamma)} \ast \sigma_{\tilde\beta} \ast \xi_{\Phi^{-1}} \sim \triangle^m \ast \sigma_{\tilde\beta} \ast \xi_{\Phi^{-1}}$ by arguments as before for some $m\in\mathbb{N}$, where $\tilde\beta$ intersects $\delta^1$ in exactly two points and cobound a trivial bigon as depicted in Figure~\ref{cancelintersection1}(c). Hence an obvious further isotopy of $\tilde\beta$ makes it disjoint from $\delta_1$ as desired.

If $\beta$ intersects $\delta_1$ in more than one point, we orient $\beta$ so that it starts from the point $q=\beta\cap\Gamma_1$ as depicted in Figure~\ref{cancelintersection2}(a). Let $q_1$ and $q_2$ be the first and the second intersection points of $\beta$ with $\delta_1$ respectively. Note that since we assume $\beta$ and $\delta_1$ do not cobound any bigon, there is no more intersection point $\beta\cap\delta_1$ along $\delta_1$ between $q_1$ and $q_2$. Let $\overrightarrow{qq_1}$, $\overrightarrow{q_1q}$ and $\overrightarrow{q_1q_2}$ be oriented subarcs of $\beta$ and $\overrightarrow{q_2q_1}$ be an oriented subarc of $\delta_1$. We obtain a closed, oriented (but not embedded) loop $\gamma=\overrightarrow{qq_1} \cup \overrightarrow{q_1q_2} \cup \overrightarrow{q_2q_1} \cup \overrightarrow{q_1q}$ by gluing the arcs together. To apply Proposition~\ref{PermEquToBTs} in this case, we take an embedded loop close to $\gamma$ as depicted in Figure~\ref{cancelintersection2}(b), which we still denote by $\gamma$. Let $D^2_\epsilon$ be a small disk containing $\Gamma_1$ as usual. Again by pulling up the bypass attachment $\sigma_\beta$ through $\xi_{\Gamma',\Phi(\Gamma_1,D^2_\epsilon,\gamma)}$, we have (stable) isotopies of contact structures $\sigma_\beta \simeq \xi_{\Gamma',\Phi(\Gamma_1,D^2_\epsilon,\gamma)} \ast \sigma_{\tilde\beta}\ast\xi_{\Phi^{-1}} \sim \triangle^r \ast \sigma_{\tilde\beta}\ast\xi_{\Phi^{-1}}$ for some $r\in\mathbb{N}$, where $\tilde\beta$ and $\delta_1$ bound a trivial bigon. Hence an obvious further isotopy eliminates the trivial bigon and decreases $\#(\beta\cap\delta_1)$ by 2. By applying the above argument finitely many times, we can reduce to the case where $\beta$ intersects $\delta_1$ in exactly one point, but we have already solved the problem in this case. We conclude that under the hypothesis at the beginning of this case, there exists a $\tilde\beta$ disjoint with
 $\delta_1$ such that $\sigma_\alpha \ast \sigma_\beta \sim \sigma_\alpha \ast \sigma_{\tilde\beta} \ast \triangle^l \ast \xi_\Phi$ for some isotopy $\Phi$ and an integer $l\in\mathbb{N}$.\\

\begin{figure}[h]
    \begin{overpic}[scale=.22]{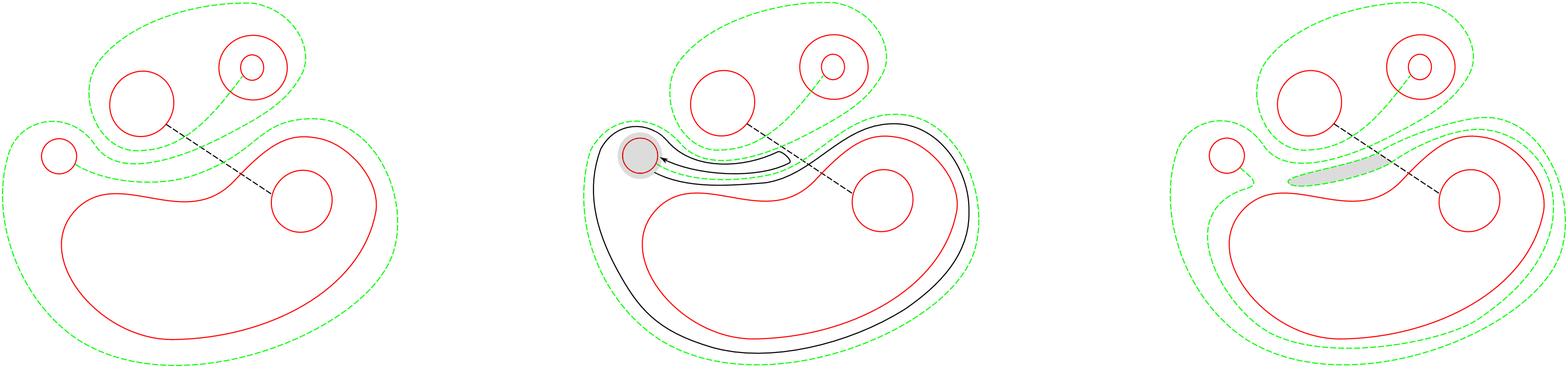}
    \put(1.4,11.5){\tiny{$\Gamma_1$}}
    \put(5,13){\tiny{$q$}}
    \put(13.2,11.7){\tiny{$q_1$}}
    \put(12.8,14.5){\tiny{$q_2$}}
    \put(25,5){\tiny{$\beta$}}
    \put(39.2,7){\tiny{$\gamma$}}
    \put(100,5){\tiny{$\beta'$}}
    \put(10,-4.5){(a)}
    \put(48,-4.5){(b)}
    \put(86,-4.5){(c)}
    \end{overpic}
    \newline
    \caption{(a) The convex sphere $(S^2,\Gamma')$ with an admissible arc $\beta$ intersecting $\delta_1$ in at least two points, say, $q_1$ and $q_2$. (b) The embedded, oriented loop $\gamma$ approximating the broken loop $\vec{qq_1} \cup \vec{q_1q_2} \cup \vec{q_2q_1} \cup \vec{q_1q}$. (c) The pull-up of $\beta$ through the contact structure $\xi_{\Gamma_1,D^2_\epsilon,\gamma}$ bounds a trivial bigon with $\delta_1$.}
    \label{cancelintersection2}
\end{figure}

\begin{figure}[h]
    \begin{overpic}[scale=.25]{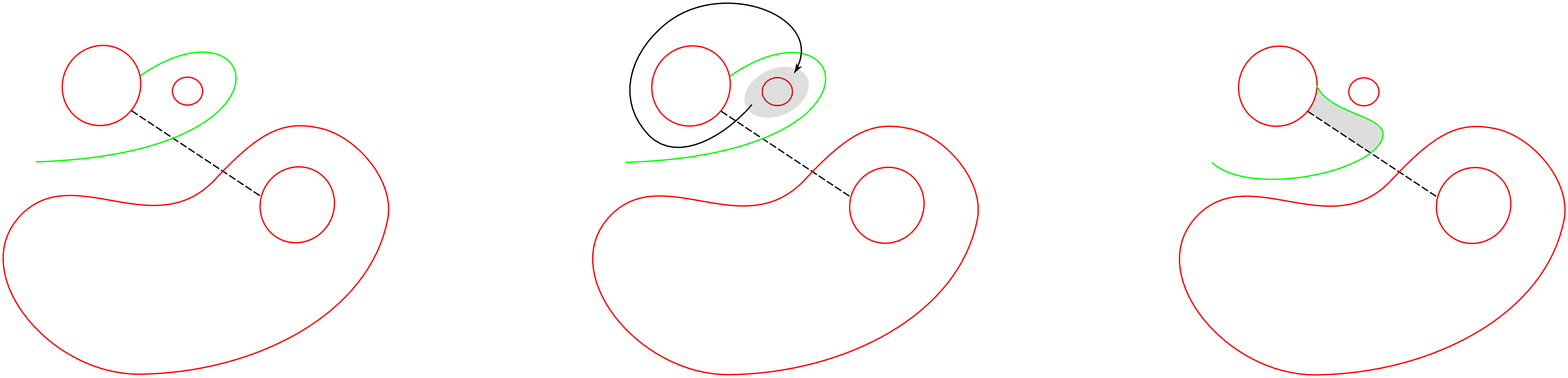}
    \put(8.5,20){\tiny{$r$}}
    \put(6.2,17.5){\tiny{$p_1$}}
    \put(10.3,13.7){\tiny{$r_1$}}
    \put(38.6,18){\tiny{$\gamma$}}
    \put(15.3,18){\tiny{$\beta$}}
    \put(88.7,15){\tiny{$\tilde\beta$}}
    \put(10,-4.5){(a)}
    \put(47,-4.5){(b)}
    \put(86,-4.5){(c)}
    \end{overpic}
    \newline
    \caption{(a) The admissible arc $\beta$, the dividing set $\Gamma'$ and $\delta_1$ cobound a topological triangle $\triangle rr_1p_1$, which may contain other components of the dividing set in the interior. (b) Choose the disk $D^2_\epsilon$ to contain all the components of the dividing set in the topological triangle $\triangle rr_1p_1$, and an oriented loop $\gamma$ which intersects $\beta$ in exactly one point. (c) By applying the isotopy along $\gamma$, the admissible arc $\beta$ becomes $\beta'$ which bounds a trivial triangle with the dividing set and $\delta_1$.}
    \label{cancelintersection3}
\end{figure}

\noindent
\emph{Case 2.} Suppose $\beta$ nontrivially intersects the union of the three components of the dividing set generated by the bypass attachment $\sigma_\alpha$. Without loss of generality, we pick an intersection point $r$ as depicted in Figure~\ref{cancelintersection3}(a). Orient $\beta$ so that it starts from $r$. Let $r_1$ be the first intersection point of $\beta$ and $\delta_1$. Then $\beta$, $\delta_1$ and $\Gamma'$ bound a triangle $\triangle rr_1p_1$. By the assumption that there exists no bigon bounded by $\beta$ and $\delta_1$, the interior of the triangle $\triangle rr_1p_1$ does not intersect with $\beta$. If the interior of the triangle $\triangle rr_1p_1$ contains no components of the dividing set, then it is easy to isotop $\beta$ so that $\#(\beta\cap\delta_1)$ decreases by 1. If otherwise, take a small disk $D^2_\epsilon \subset \triangle rr_1p_1$ containing all components of the dividing set $\tilde\Gamma$ in $\triangle rr_1p_1$, i.e., $\triangle rr_1p_1 \setminus D^2_\epsilon$ does not intersect with the dividing set $\Gamma'$. Let $\gamma$ be an oriented loop based at a point in $D^2_\epsilon$ which does not intersect with the dividing set, and intersects $\beta$ exactly once. By pulling up the bypass attachment $\sigma_\beta$ through $\xi_{\Phi(\tilde\Gamma,D^2_\epsilon,\gamma)}$, we have (stable) isotopies of contact structures $\sigma_\beta \simeq \xi_{\Gamma',\Phi(\tilde\Gamma,D^2_\epsilon,\gamma)}\ast\sigma_{\tilde\beta}\ast\xi_{\Phi^{-1}} \sim \sigma_{\tilde\beta}\ast\triangle^n\ast\xi_{\Phi^{-1}}$ so that $\tilde\beta$, $\delta_1$ and $\Gamma'$ bound a trivial triangle in the sense that the interior of the triangle does not intersect with the dividing set. Hence we can further isotop $\tilde\beta$ to eliminate the trivial triangle and hence decrease $\#(\tilde\beta\cap\delta_1)$ by 1. By applying such isotopies finitely many times, we get an admissible arc $\tilde\beta$ such that $\#(\tilde\beta\cap\delta_1)=0$ and satisfy all the conditions of the proposition.
\end{proof}

\section{Classification of overtwisted contact structures on $S^2\times[0,1]$}

We have established enough techniques to classify overtwisted contact structures on $S^2\times[0,1]$.

\begin{prop} \label{Classifi}
Let $\xi$ be an overtwisted contact structure on $S^2\times[0,1]$ such that $S^2\times\{0,1\}$ is convex with $\Gamma_{S^2\times\{0\}}=\Gamma_{S^2\times\{1\}}=S^1$. Then $\xi\sim\triangle^n$ for some $n\in\mathbb{N}$, where $\triangle^{n}$ denotes the contact structure on $S^2\times[0,1]$ obtained by attaching $n$ bypass triangles to $S^2\times\{0\}$ with the standard tight neighborhood.
\end{prop}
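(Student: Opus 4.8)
The plan is to reduce $\xi$, via the film picture of Theorem~\ref{film pic}, to a finite concatenation of bypass attachments on $S^2\times[0,1]$, and then to simplify that concatenation using the results of Sections~5--7: each time a ``creating'' bypass is cancelled against a ``destroying'' bypass, the price is a bounded number of bypass triangles together with a contact structure induced by an isotopy, and both of these can be transported to $S^2\times\{1\}$ and absorbed into further bypass triangles.

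\emph{Normal form.} By Theorem~\ref{film pic}, after an isotopy rel boundary there are $0=t_0<t_1<\dots<t_k=1$ with $S^2\times\{t_i\}$ convex, dividing sets $\Gamma_i$ (so $\Gamma_0=\Gamma_k=S^1$), and $\xi|_{S^2\times[t_{i-1},t_i]}\simeq\sigma_i$ a bypass attachment. By Lemma~\ref{Triviality} every trivial $\sigma_i$ may be deleted, so I assume each $\sigma_i$ is non-trivial; a non-trivial bypass on $S^2$ changes $\#\Gamma$ by exactly $\pm 2$ (this is the framework underlying Proposition~\ref{disjointness}). Hence $\#\Gamma_0,\dots,\#\Gamma_k$ is a lattice path from $1$ to $1$ with steps $\pm 2$ staying $\ge 1$; in particular, if $k\ge1$ then $\sigma_1$ is the overtwisted bypass, and the number of $+2$ steps equals the number of $-2$ steps.

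\emph{Induction.} I induct on $c(\xi):=\sum_i(\#\Gamma_i-1)\ge0$. If $c(\xi)=0$ then $k=0$ and $\Gamma\equiv S^1$; such a $\xi$ is induced by an isotopy of $S^2$ fixing one dividing circle, so its Pontryagin submanifold (Section~7) is a $1$-strand braid with writhe $0$, and by Remark~\ref{uniquetight} it is isotopic rel boundary to the $I$-invariant structure $\triangle^{0}$. If $c(\xi)>0$, the lattice path attains an interior local maximum at some index $i$, so $\sigma_i$ is a $+2$ bypass along an arc $\alpha$ and $\sigma_{i+1}$ is a $-2$ bypass along an arc $\beta\subset(S^2,\Gamma_i)$. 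Proposition~\ref{disjointness} then gives, stably and rel boundary,
\[
\sigma_i\ast\sigma_{i+1}\ \sim\ \sigma_\alpha\ast\sigma_{\tilde\beta}\ast\triangle^{l}\ast\xi_\Phi ,
\]
with $\tilde\beta$ disjoint from $\alpha$. Using Corollary~\ref{Commutativity} to commute $\triangle$ past every bypass, the push-down/pull-up Lemma~\ref{DesAscBypass} to commute bypasses past isotopy-induced pieces, and Proposition~\ref{PermEquToBTs} to rewrite $\xi_\Phi$ (induced by a pure braid of the dividing set) as a power of $\triangle$, I sweep all $\triangle$-factors and all isotopy-induced factors toward $S^2\times\{1\}$. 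It remains to analyze the pair $\sigma_\alpha\ast\sigma_{\tilde\beta}$ at the site of the old local maximum: because $\tilde\beta$ has been made disjoint from $\alpha$, one can relocate $\sigma_{\tilde\beta}$ — exactly as in the juggling of Lemma~\ref{SimplePermBCD} and Proposition~\ref{PermEquToBTs} — so as to eliminate the local maximum, strictly decreasing $c$. Iterating leaves $\xi\sim\triangle^{n}\ast\xi_I$ with $\xi_I$ induced by an isotopy and $\Gamma_{S^2\times\{0,1\}}=S^1$, and the base case gives $\xi_I\simeq\triangle^{0}$; hence $\xi\sim\triangle^{n}$.

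\emph{The main obstacle.} The delicate step is the cancellation at a local maximum whose base has $\#\Gamma=1$, i.e., where the $+2$ bypass $\sigma_\alpha$ is the overtwisted bypass and its new dividing curves are genuinely met by $\sigma_{\tilde\beta}$. There one cannot simply commute $\sigma_{\tilde\beta}$ past $\sigma_\alpha$; instead one must show that an overtwisted bypass followed by a $-2$ bypass returning the sphere to $\Gamma=S^1$ is stably isotopic to a fixed power of $\triangle$ — a ``one-peak'' version of the whole proposition, again built from Proposition~\ref{disjointness}, Proposition~\ref{PermEquToBTs} and Lemma~\ref{DesAscBypass}. A secondary, purely bookkeeping difficulty is ensuring that the triangle-factors and isotopy-induced factors produced at each step really can all be carried to $S^2\times\{1\}$ without colliding with the bypasses still to be processed; this needs no new input beyond Corollary~\ref{Commutativity}, Lemma~\ref{DesAscBypass} and Proposition~\ref{PermEquToBTs}, but the order of the moves must be tracked carefully.
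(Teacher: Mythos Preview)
Your overall strategy --- represent $\xi$ as a finite concatenation of bypasses and inductively reduce the complexity of the resulting sequence --- is exactly the paper's approach. However, there is a genuine gap in your normal-form step.

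\medskip
\noindent\textbf{The gap.} You assert that after deleting trivial bypasses, ``a non-trivial bypass on $S^2$ changes $\#\Gamma$ by exactly $\pm 2$''. This is false. The paper classifies admissible arcs on $(S^2,\Gamma)$ into four types (I)--(IV); types (II) and (III) give bypass attachments that are in general \emph{non-trivial} yet leave $\#\Gamma$ unchanged. (For instance, an arc meeting one dividing circle twice and a second circle once can give a non-trivial bypass that merely reshuffles the components.) So your lattice-path picture with steps $\pm 2$ is not yet justified, and the induction on $c(\xi)=\sum_i(\#\Gamma_i-1)$ cannot start. The paper spends its Claim~1 on precisely this point: a type~(III) bypass $\sigma_\alpha$ is rewritten as $\sigma_\alpha\simeq\sigma_\alpha\ast\sigma_\beta\simeq\sigma_\beta\ast\sigma_\alpha$ for a suitable disjoint $\beta$ (a ``left rotation''), where the latter is a type~(IV) followed by a type~(I); type~(II) is reduced to type~(III) by a similar trick. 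This is a real piece of work you have skipped, not something that follows from Lemma~\ref{Triviality}.

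\medskip
\noindent\textbf{A secondary issue.} Your inductive step is also thinner than it needs to be. After Proposition~\ref{disjointness} hands you $\tilde\beta$ disjoint from $\alpha$, you say one can ``relocate $\sigma_{\tilde\beta}$ \dots\ so as to eliminate the local maximum''. In the paper this is Claim~2, and it requires a three-case analysis according to how many times $\tilde\beta$ meets the dividing circle $\gamma$ that $\alpha$ sits on: if $\#(\tilde\beta\cap\gamma)\leq 1$ one can simply commute; if $\#(\tilde\beta\cap\gamma)=2$ the swap produces two type~(III) bypasses and one must feed them back through Claim~1; if $\#(\tilde\beta\cap\gamma)=3$ one either recognizes a bypass triangle directly or inserts an auxiliary $\triangle_\beta$ to break the pair into type~(III) pieces. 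Disjointness of $\alpha$ and $\tilde\beta$ alone does not make this automatic. Your ``main obstacle'' paragraph is really the $c=3$ endpoint of this same case analysis; the paper handles it by stopping the reduction at complexity $\leq 3$ and invoking (the $D=\emptyset$ subcase of) Case~3 to see that each $1\to 3\to 1$ block is a bypass triangle.
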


\begin{proof}
By Giroux's criterion of tightness, both $S^2\times\{0\}$ and $S^2\times\{1\}$ have neighborhoods which are tight. Take an increasing sequence $0=t_0<t_1<\cdots<t_n=1$ such that $\xi$ is isotopic to a sequence of bypass attachments $\sigma_{\alpha_0} \ast \sigma_{\alpha_1} \ast\cdots\ast \sigma_{\alpha_{n-1}}$, where $\alpha_i\subset S^2\times\{t_i\}$ are admissible arcs along which a bypass is attached. Define the complexity of a bypass sequence to be $c=\max_{0\leq i \leq n}\#\Gamma_{S^2\times\{t_i\}}$. The idea is to show that if $c>3$, then we can always decrease $c$ by 2 by isotoping the bypass sequence and suitably attaching bypass triangles.

To achieve this goal, we divide the admissible arcs on $(S^2,\Gamma)$ into four types (I), (II), (III) and (IV), according to the number of components of $\Gamma$ intersecting the admissible arc as depicted in Figure~\ref{Types}, where we only draw the dividing set which intersects the admissible arc. Observe that bypass attachment of type (I) increases $\#\Gamma$ by 2, bypass attachment of type (II) and (III) do not change $\#\Gamma$, and bypass attachment of type (IV) decreases $\#\Gamma$ by 2. Hence the complexity of a sequence of bypass attachments changes only if the types of bypasses in the sequence change. By repeated application of Lemma~\ref{DesAscBypass}, we may assume that contact structures induced by isotopies are contained in a neighborhood of $S^2\times\{1\}$. By assumption, $S^2\times\{1\}$ has a tight neighborhood. Hence according to Remark~\ref{uniquetight}, we shall only consider sequences of bypass attachments modulo contact structures induced by isotopies. \\

\begin{figure}[h]
    \begin{overpic}[scale=.26]{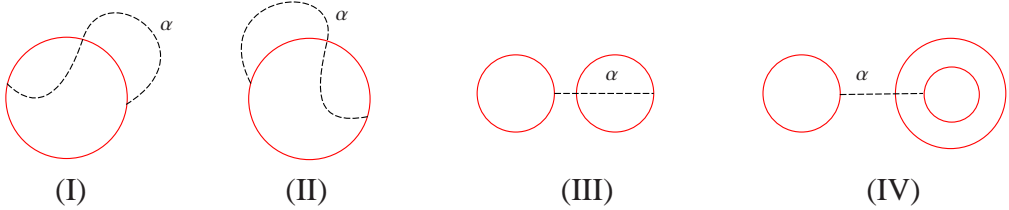}
    \put(15.5,13){\tiny{$\alpha$}}
    \put(33,14){\tiny{$\alpha$}}
    \put(60,8){\tiny{$\alpha$}}
    \put(85,8){\tiny{$\alpha$}}
    \put(5,-4){(I)}
    \put(28,-4){(II)}
    \put(55.5,-4){(III)}
    \put(86,-4){(IV)}
    \end{overpic}
    \newline
    \caption{Four types of admissible arcs $\alpha$ on $(S^2,\Gamma)$.}
    \label{Types}
\end{figure}

\noindent
{\em Claim 1}: We can isotop the sequence of bypass attachments such that only bypasses of type (I) and (IV) appear. \\

To prove the claim, we first show that a bypass attachment of type (III) can be eliminated. Take an admissible arc $\alpha$ of type (III). If the bypass attachment along $\alpha$ is trivial, then by Lemma~\ref{Triviality}, the bypass attachment $\sigma_\alpha$ is induced by an isotopy. Otherwise there exists an admissible arc $\beta$ disjoint from $\alpha$ as depicted in Figure~\ref{ElimTyps}(a)\footnote{In literature, we say $\beta$ is obtained from $\alpha$ by {\em left rotation}.} such that if one attaches a bypass along $\alpha$, followed by a bypass attached along $\beta$, then the later bypass attachment is trivial.

\begin{figure}[h]
    \begin{overpic}[scale=.25]{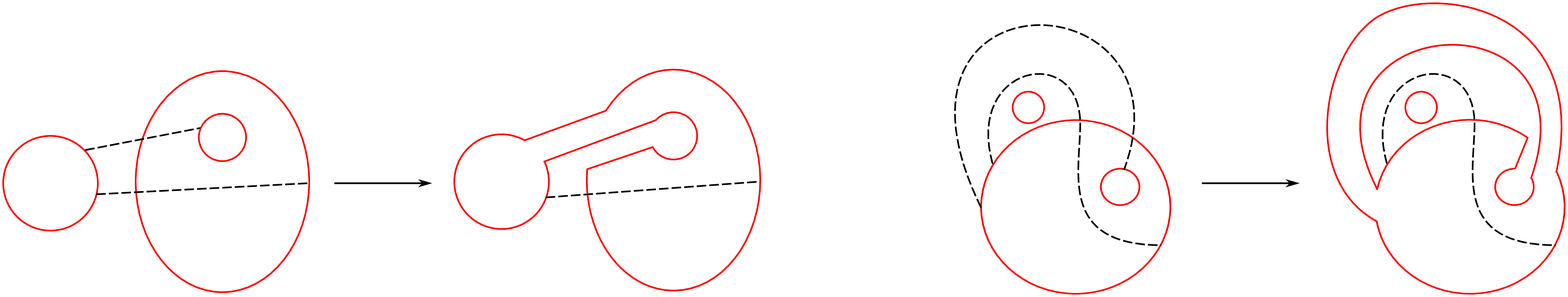}
    \put(12,5){\tiny{$\alpha$}}
    \put(7,11){\tiny{$\beta$}}
    \put(43,5){\tiny{$\alpha$}}
    \put(23,9){\footnotesize{$\sigma_{\beta}$}}
    \put(66.5,5){\tiny{$\alpha$}}
    \put(73,13){\tiny{$\beta$}}
    \put(92,5){\tiny{$\alpha$}}
    \put(78,9){\footnotesize{$\sigma_{\beta}$}}
    \put(23,-4){(a)}
    \put(78,-4){(b)}
    \end{overpic}
    \newline
    \caption{}
    \label{ElimTyps}
\end{figure}

By the disjointness of admissible arcs $\alpha$ and $\beta$, we get the following isotopies of contact structures,
\begin{align*}
\sigma_{\alpha} &\simeq \sigma_{\alpha}\ast\sigma_{\beta} \\
                  &\simeq \sigma_{\beta}\ast\sigma_{\alpha}.
\end{align*}
Observe that $\sigma_{\beta}\ast\sigma_{\alpha}$ is a composition of type (I) and type (IV) bypass attachments. Hence a finite number of such isotopies will eliminate all bypass attachments of type (III) in a sequence.

Similarly suppose that $\sigma_\alpha$ is the bypass attachment of type (II) in a sequence and is nontrivial. Then there must exist other components of the dividing set as shown in Figure~\ref{ElimTyps}(b). Choose an admissible arc $\beta$ disjoint from $\alpha$ as depicted in Figure~\ref{ElimTyps}(b) such that if one attaches a bypass along $\alpha$, followed by a bypass attached along $\beta$, then the later bypass attachment is trivial. By the disjointness of $\alpha$ and $\beta$ again, we get the following isotopies of contact structures:
\begin{align*}
\sigma_{\alpha} &\simeq \sigma_{\alpha}\ast\sigma_{\beta} \\
                  &\simeq \sigma_{\beta}\ast\sigma_{\alpha}.
\end{align*}
Observe that $\sigma_{\beta}\ast\sigma_{\alpha}$ is a composition of bypass attachments both of type (III), hence by a further isotopy will turn $\sigma_\alpha$ into a composition of bypass attachments of type (I) and (IV). A finite number of such isotopies will eliminate bypasses of type (II). The claim follows. \\

From now on, we assume that any bypass attachment in $\sigma_{\alpha_0} \ast \sigma_{\alpha_1} \ast\cdots\ast \sigma_{\alpha_{n-1}}$ either increases or decreases $\#\Gamma$ by 2.

Assume that the complexity of the bypass sequence is achieved at level $S^2\times\{t_r\}$ for some $r\in\{0,1,\cdots,n\}$ and is at least 5, i.e., $\#\Gamma_{S^2\times\{t_r\}}=c\geq5$. Then it is easy to see that $\sigma_{\alpha_{r-1}}$ is type (I) and $\sigma_{\alpha_r}$ is type (IV). By Proposition~\ref{disjointness}, we can always assume that $\alpha_r$ is disjoint from $\alpha_{r-1}$ modulo finitely many bypass triangle attachments. Hence we can view both $\alpha_{r-1}$ and $\alpha_r$ as admissible arcs on $S^2\times\{t_{r-1}\}$. To finish the proof of the proposition, it suffices to prove the following claim. \\

\noindent
{\em Claim 2:} We can isotop the composition of bypass attachments $\sigma_{\alpha_{r-1}}\ast\sigma_{\alpha_r}$ such that the local maximum of $\#\Gamma$ at $S^2\times\{t_r\}$ decreases by at least 2.

To prove the claim, let $\gamma\subset\Gamma_{S^2\times\{t_{r-1}\}}$ be the dividing circle which nontrivially intersects $\alpha_{r-1}$. We do a case-by-case analysis depending on the number of points $\alpha_r$ intersecting with $\gamma$. \\

\noindent {\em Case 1}: If $\alpha_r$ intersects $\gamma$ in at most one point, then one easily check that by applying isotopy $\sigma_{\alpha_{r-1}}\ast\sigma_{\alpha_r} \simeq \sigma_{\alpha_r}\ast\sigma_{\alpha_{r-1}}$ to the sequence of bypass attachments, $\#\Gamma_{S^2\times\{t_r\}}$ decreases by 4. \\

\noindent {\em Case 2}: If $\alpha_r$ intersects $\gamma$ in exactly two points, then once again we apply the isotopy $\sigma_{\alpha_{r-1}}\ast\sigma_{\alpha_r} \simeq \sigma_{\alpha_r}\ast\sigma_{\alpha_{r-1}}$ to the sequence of bypass attachments. Now observe that $\sigma_{\alpha_r}\ast\sigma_{\alpha_{r-1}}$ is a composition of bypass attachments of type (III). In the proof of the claim above, we see that any bypass attachment of type (III) is isotopic to a composition of a bypass attachment of type (IV) followed by a bypass attachment of type (I). Such an isotopy also decreases the local maximum of $\#\Gamma$ by 4. \\

\noindent {\em Case 3}: If $\alpha_r$ also intersects $\gamma$ in three points, we consider a disk $D$ bounded by $\gamma$ and $\alpha_{r-1}$ as depicted in Figure~\ref{Case3}(a). If $D$ contains no component of the dividing set in the interior, then $\sigma_{\alpha_{r-1}}\ast\sigma_{\alpha_r}$ is isotopic to a bypass triangle attachment, more precisely, there exists a trivial bypass along an admissible arc $\delta$ on $S^2\times\{t_r\}$ such that $\sigma_{\alpha_{r-1}}\ast\sigma_{\alpha_r}\ast\sigma_\delta$ is a bypass triangle attachment along $\alpha_{r-1}$. Suppose $D$ contains at least one connected component of the dividing set. Let $\beta$ be an admissible arc on $S^2\times\{t_{r-1}\}$ disjoint from $\alpha_{r-1}$ and $\alpha_r$ such that it intersects $\gamma$ in two points and the dividing set contained in $D$ in one point as depicted in Figure~\ref{Case3}(b). \\

\begin{figure}[h]
    \begin{overpic}[scale=.3]{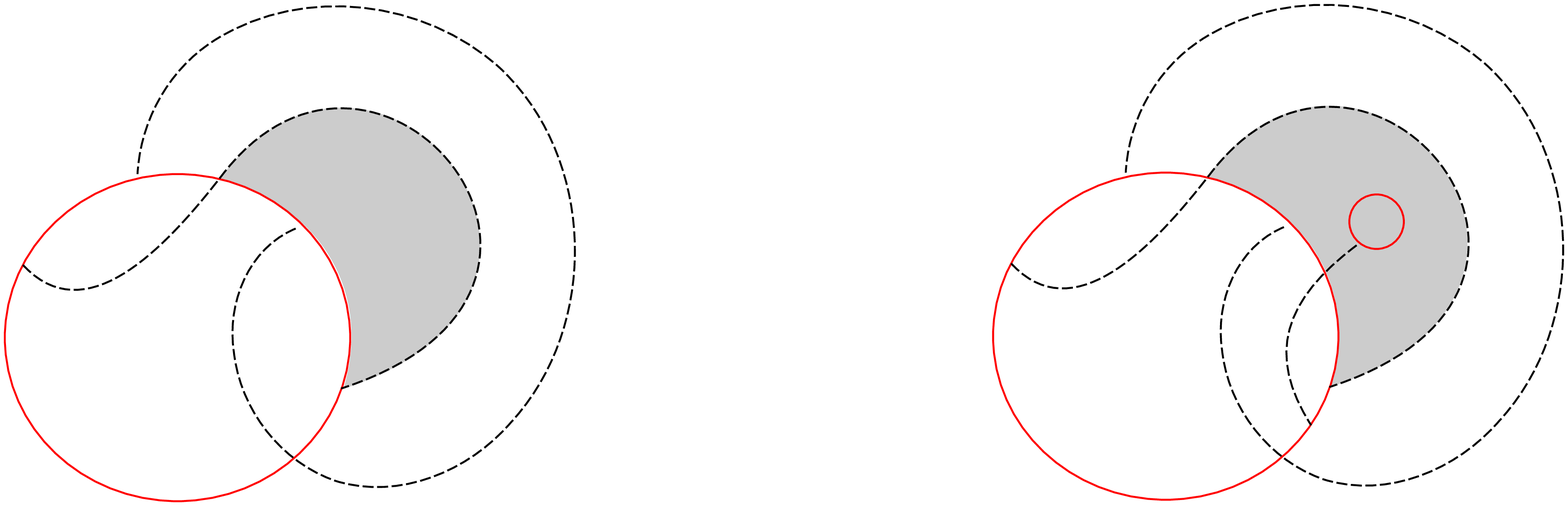}
    \put(29,22){\tiny{$\alpha_{r-1}$}}
    \put(92.5,22){\tiny{$\alpha_{r-1}$}}
    \put(36,24){\tiny{$\alpha_r$}}
    \put(99,24){\tiny{$\alpha_r$}}
    \put(23,17){\small{$D$}}
    \put(0,3){\tiny{$\gamma$}}
    \put(63.5,3){\tiny{$\gamma$}}
    \put(80.5,11){\tiny{$\beta$}}
    \put(17,-5){(a)}
    \put(80,-5){(b)}
    \end{overpic}
    \newline
    \caption{}
    \label{Case3}
\end{figure}

We have the following isotopies of contact structures due to Lemma~\ref{LemBT} and the disjointness of admissible arcs:
\begin{align*}
\sigma_{\alpha_{r-1}}\ast\sigma_{\alpha_r}\ast\triangle
&\simeq\sigma_{\alpha_{r-1}}\ast\sigma_{\alpha_r}\ast\triangle_\beta \\
&=\sigma_{\alpha_{r-1}}\ast\sigma_{\alpha_r}\ast\sigma_\beta\ast\sigma_{\beta'}\ast\sigma_{\beta''} \\
&\simeq\sigma_\beta\ast\sigma_{\alpha_{r-1}}\ast\sigma_{\alpha_r}\ast\sigma_{\beta'}\ast\sigma_{\beta''}
\end{align*}

One can check that the last five bypass attachments above are all of type (III). Hence we can further isotop as before to eliminate type (III) bypass attachments to decrease the local maximum of $\#\Gamma$ by 2.

To summarize, we have proved that any sequence of bypass attachments $\sigma_{\alpha_0} \ast \sigma_{\alpha_1} \ast\cdots\ast \sigma_{\alpha_{n-1}}$ on $S^2\times[0,1]$ is stably isotopic to another sequence of bypass attachments whose complexity is at most 3, which is clearly isotopic to a power of bypass triangle attachments. Thus the proposition is proved.
\end{proof}

\section{Proof of the main theorem}

Now we are ready to finish the proof of Theorem~\ref{OT}.\\

\noindent
{\em Proof of Theorem~\ref{OT}.} By Proposition~\ref{2skeleton}, we can isotop $\xi$ and $\xi'$ so that they agree in a neighborhood of the 2-skeleton. Without loss of generality, we can furthermore assume that there exists an embedded closed ball $B^3\subset M$ such that
\be
\item{$\bdry B^3$ is convex and has a tight neighborhood in $M$ with respect to both $\xi$ and $\xi'$.}
\item{$\xi=\xi'$ in $M\setminus B^3$.}
\item{The restriction of $\xi$ and $\xi'$ to $M\setminus B^3$ and to $B^3$ are all overtwisted.}
\ee

Take a small ball $B_\epsilon^3 \subset B^3$ in a Darboux chart so that both $\xi|_{B_\epsilon^3}$ and $\xi'|_{B_\epsilon^3}$ are tight. We identify $B^3\setminus B_\epsilon^3$ with $S^2\times[0,1]$ and represent the contact structures $\xi|_{B^3\setminus B_\epsilon^3}$ and $\xi'|_{B^3\setminus B_\epsilon^3}$ by two sequences of bypass attachments. By Proposition~\ref{Classifi}, both $\xi|_{B^3\setminus B_\epsilon^3}$ and $\xi'|_{B^3\setminus B_\epsilon^3}$ are stably isotopic to some power of the bypass triangle attachment, in other words, there are isotopies of contact structures $\xi|_{B^3\setminus B_\epsilon^3}\ast\triangle^r \simeq \triangle^{n+r}$ and $\xi'|_{B^3\setminus B_\epsilon^3}\ast\triangle^s \simeq \triangle^{m+s}$ for some $n,m,r,s\in\mathbb{N}$. By assumption, the restriction of $\xi$ and $\xi'$ to $M\setminus B^3$ are overtwisted, so there exist bypass triangle attachments along any admissible arc on $\bdry B^3$ according to Lemma~\ref{Abund}. By simultaneously attaching sufficiently many bypass triangles to $\xi|_{B^3\setminus B_\epsilon^3}$ and $\xi'|_{B^3\setminus B_\epsilon^3}$, we can further assume that $\xi|_{B^3\setminus B_\epsilon^3} \simeq \triangle^n$, $\xi'|_{B^3\setminus B_\epsilon^3} \simeq \triangle^m$ and $\xi=\xi'$ on $M\setminus B^3$.

Let $d$ be the largest integer such that the Euler class $e(\xi)=e(\xi')\in H^2(M;\mathbb{Z})$ divided by $d$ is still an integral class. Such a $d$ is known as the divisibility of the Euler class. Combining Proposition 2.11 and Theorem 0.5 in~\cite{Hu}, we have $d|(m-n)$. To complete the proof of the theorem, we need to show that $\xi|_{M\setminus B^3}$ is isotopic to $\xi|_{M\setminus B^3}\ast\triangle^d$ relative to the boundary. Since $d=g.c.d.\{e(\Sigma)|\Sigma\in H_2(M)\}$, it suffices to prove the following more general fact.

\begin{lemma}
Let $\Sigma$ be a closed surface of genus $g$ and $\eta$ be an $I$-invariant contact structure on $\Sigma\times[0,1]$. Then $\eta\ast\triangle^l$ is stably isotopic to $\eta$ relative to the boundary, where $l=e(\eta)(\Sigma)$.
\end{lemma}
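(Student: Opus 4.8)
The plan is to normalize $\eta$ by Giroux's flexibility theorem, trade the $l$ bypass triangles for a contact structure induced by an isotopy of $\Sigma$ (a \emph{pure braid of the dividing set} in the sense of Section~6, but now carried out on $\Sigma$ rather than on $S^2$, so that the braiding is allowed to wind around the handles of $\Sigma$), and then to compute the homotopy class of that isotopy by the generalized Pontryagin--Thom construction and see that it consumes exactly $e(\eta)(\Sigma)$ units of the Hopf invariant.

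First I would fix the model. By Theorem~\ref{Flex} we may assume $\eta$ is $I$-invariant with a chosen dividing multicurve $\Gamma\subset\Sigma$, and recall the identity $l=e(\eta)(\Sigma)=\chi(R_+(\Gamma))-\chi(R_-(\Gamma))$. This number is unchanged under isotopy of $\Gamma$ and under adding or deleting a contractible component of $\Gamma$ together with the disk it bounds, and the essential components of $\Gamma$ contribute nothing to it. So I may normalize $\Gamma$ so that the part responsible for $l$ sits in a controlled piece of $\Sigma$, the remaining components being essential spectator curves. Using Corollary~\ref{Commutativity} (centrality of $\triangle$) and Lemma~\ref{LemBT} (the attaching arc of a bypass triangle may be chosen freely), I may also assume that the $l$ bypass triangles of $\eta\ast\triangle^{l}$ are attached along $l$ admissible arcs that I am free to route around the handles of $\Sigma$.

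Next I would run the constructions of Section~6 backwards. Repeated application of Lemma~\ref{DesAscBypass} pushes the bypass triangles down through the $I$-invariant part of $\eta$ and re-foliates $\Sigma\times[0,1]$ by convex surfaces; the outcome is a contact structure $\xi_{\Gamma,\Phi}$ induced by an isotopy $\Phi$ of $\Sigma$, together with the bookkeeping relation $\eta\ast\triangle^{l}\sim\xi_{\Gamma,\Phi}$ relative to the boundary, the latter because the composition of an $I$-invariant contact structure with an isotopy-induced one is again isotopy-induced and is isotopic, rel boundary, to the isotopy-induced factor alone. The one genuinely new feature compared with the $S^{2}$ case of Lemmas~\ref{SimplePermA}--\ref{SimplePermBCD} is that the braiding loops underlying $\Phi$ may now run over the handles of $\Sigma$; it is precisely this extra freedom, absent on $S^{2}$, that lets the $l$ bypass triangles be ``spread out'' over $\Sigma$ so as to cancel. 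Thus the Lemma reduces to showing that, for a suitable routing of the arcs, the pure braid $\xi_{\Gamma,\Phi}$ is stably isotopic to the $I$-invariant $\eta$ relative to the boundary.

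The crux is this last computation. Following Section~6, I would trivialize the tangent bundle over the support of $\Phi$ inside $\Sigma\times[0,1]$, take the Gauss map $G$ of $\xi_{\Gamma,\Phi}$ and a regular value $p$, normalize by Lemma~\ref{rounding} so that the dividing curves are round, and examine the oriented framed monotone braid $\mathcal{B}=G^{-1}(p)$: its number of strands is governed by $\chi(R_{\pm}(\Gamma))$, and $\mathrm{writhe}(\mathcal{B})$ is the signed crossing count produced as the dividing curves are dragged once around the chosen loops through the handles. The routing of the attaching arcs is made so that the writhe contributed by the $l$ bypass triangles is exactly cancelled by the winding around the handles, i.e. $\mathrm{writhe}(\mathcal{B})=0$; by the relative Pontryagin--Thom construction and the fact (Theorem~0.5 in~\cite{Hu}) that each bypass triangle changes this Hopf-type invariant by one, this forces $\xi_{\Gamma,\Phi}$, hence $\eta\ast\triangle^{l}$, to be stably isotopic to $\eta$ relative to the boundary. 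I expect the main obstacle to be exactly this writhe bookkeeping: extending the local computations of Examples~\ref{expl1} and~\ref{expl2} to braids that wind over handles, and identifying the resulting signed count with $\chi(R_{+}(\Gamma))-\chi(R_{-}(\Gamma))=e(\eta)(\Sigma)$ on the nose; a secondary difficulty is keeping every intermediate surface $\Sigma\times\{t\}$ convex as a disk is dragged around a handle, which is where passing to stable isotopy in the sense of Definition~\ref{WeakIso} is essential, individual handle-crossings not being realizable by honest convex-surface isotopies.
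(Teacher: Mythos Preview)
Your overall strategy---normalize $\Gamma$, trade bypass triangles for pure braids of the dividing set, then show the resulting braid is trivial---is indeed the shape of the paper's argument. But your final step contains a genuine gap that makes the proposal circular.

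You write that once $\mathrm{writhe}(\mathcal{B})=0$, ``by the relative Pontryagin--Thom construction \dots\ this forces $\xi_{\Gamma,\Phi}$ \dots\ to be stably isotopic to $\eta$.'' The Pontryagin--Thom construction and the Hopf invariant detect only the \emph{homotopy} class of the plane field, not the isotopy class of the contact structure. Concluding isotopy from vanishing Hopf invariant is exactly the content of Theorem~\ref{OT}, which this lemma is being used to prove; so the argument as written assumes what is to be shown. In Lemma~\ref{SimplePermA} the paper does pass from a writhe computation to an isotopy, but only by invoking the elementary fact that a \emph{2-strand} monotone braid in a disk is determined up to isotopy by its self-linking number; this is a braid-theoretic statement, not a contact one, and it has no analogue for the many-strand braids winding over the handles of $\Sigma$ that your argument produces.

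The paper avoids this trap by never appealing to a homotopy invariant at the end. It chooses $\Gamma$ so that the relevant bypass triangles $\triangle_{\alpha_i}$, $\triangle_{\beta_j}$ each consist of three \emph{trivial} bypasses and hence are literally induced by isotopies; these isotopies are identified concretely as parallel transports of single circles $\Gamma_0$, $\Gamma_1$ along the separating curves $\gamma_i$. The cancellation is then established by a direct push-down computation (Lemma~\ref{DesAscBypass}): one attaches a single bypass triangle $\triangle_{\delta_i}$ along an arc crossing $\gamma_i$, pushes $\sigma_{\delta_i}$ through the composite braid $\eta_{\gamma_i}$, observes that the push-down $\tilde\delta_i$ is isotopic to $\delta_i$, and that the remaining braid is visibly $I$-invariant. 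No Hopf-invariant or writhe argument is needed (or valid) at this stage. Your proposal is missing precisely this explicit geometric cancellation mechanism.
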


\begin{proof}
Since we only consider stable isotopies of contact structures, one can prescribe any dividing set $\Gamma_\Sigma$ on $\Sigma$ such that the Euler class evaluates on $\Sigma$ to $l$. In particular, we consider the dividing set on $\Sigma$ as depicted in Figure~\ref{Torsion}, namely, there are $g+1$ circles $\gamma_1\cup\cdots\cup\gamma_{g+1}$ dividing $\Sigma$ into two punctured disks, in each of which there are $p$ and $q$ isolated circles respectively. We call the left most circles in the sets of $p$ and $q$ isolated circles $\Gamma_0$ and $\Gamma_1$ respectively. We also choose admissible arcs $\{\alpha_1,\alpha_2,\cdots,\alpha_{p-1}\}$ and $\{\beta_1,\beta_2,\cdots,\beta_{q-1}\}$, and orient $\gamma_i$, $1\leq i\leq g+1$, in a way as depicted in Figure~\ref{Torsion}.

\begin{figure}[h]
    \begin{overpic}[scale=.35]{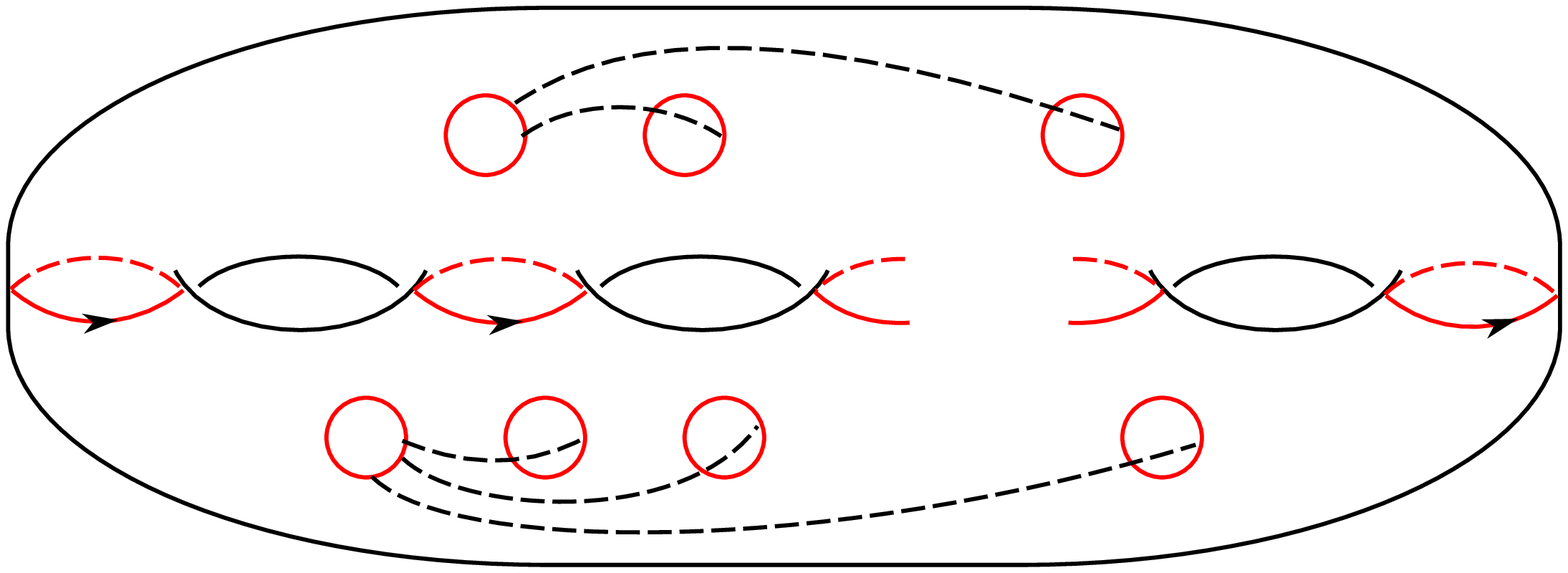}
    \put(8,14){\tiny{$\gamma_1$}}
    \put(34.5,14){\tiny{$\gamma_2$}}
    \put(88.5,13.5){\tiny{$\gamma_{g+1}$}}
    \put(28,8){\tiny{$\alpha_1$}}
    \put(38,6){\tiny{$\alpha_2$}}
    \put(66,3){\tiny{$\alpha_{p-1}$}}
    \put(57,8){$\dots$}
    \put(36.5,26){\tiny{$\beta_1$}}
    \put(62.5,32.2){\tiny{$\beta_{q-1}$}}
    \put(53,27){$\dots$}
    \put(60.5,17.5){$\dots$}
    \put(12,8){\tiny{$-$}}
    \put(12,24){\tiny{$+$}}
    \put(22,7.3){\tiny{$+$}}
    \put(33.3,8.2){\tiny{$+$}}
    \put(44.7,8){\tiny{$+$}}
    \put(73.2,8){\tiny{$+$}}
    \put(29.8,26.5){\tiny{$-$}}
    \put(42.6,26.3){\tiny{$-$}}
    \put(68,26.3){\tiny{$-$}}
    \put(97,30){$\Sigma$}
    \put(17,10.4){\tiny{$\Gamma_0$}}
    \put(25,30.4){\tiny{$\Gamma_1$}}
    \end{overpic}
    \caption{}
    \label{Torsion}
\end{figure}

An easy calculation shows that $l=2(p-q)$. Choose small disks $D^2_{\epsilon,0}$, $D^2_{\epsilon,1}$ in $\Sigma$ such that $D^2_{\epsilon,0} \cap \Gamma_\Sigma=\Gamma_0$ and $D^2_{\epsilon,1} \cap \Gamma_\Sigma=\Gamma_1$. Observe that the bypass triangle attachment along any $\alpha_i$ and $\beta_j$ consists of three trivial bypass attachments, hence is isotopic to contact structures induced by a pure braid of the dividing set. More precisely, let $\gamma^-_i$, $i=1,2,\cdots,g+1$, be an oriented loop in the negative region which is parallel to $\gamma_i$. We have the following isotopies of contact structures $\triangle_{\alpha_1}^2\ast\cdots\ast\triangle_{\alpha_{p-1}}^2 \simeq \eta_{\Phi(\Gamma_0,D^2_{\epsilon,0},\gamma^-_1\cup\cdots\cup\gamma^-_{g+1})} \simeq \eta_{\Phi(\Gamma_0,D^2_{\epsilon,0},\gamma^-_1)}\ast\cdots\ast\eta_{\Phi(\Gamma_0,D^2_{\epsilon,0},\gamma^-_{g+1})}$, where we think of $\gamma^-_1\cup\cdots\cup\gamma^-_{g+1}$ as an oriented loop homologous to the union of the $\gamma_i$'s. Similarly one can study the bypass triangle attachments along the $\beta_j$'s, but with an opposite orientation. Let $\gamma^+_i$ be an oriented loop in the positive region which is parallel to $\gamma_i$ for $1\leq i\leq g+1$. We have the following (stable) isotopies of contact structures $\triangle_{\beta_1}^{-2}\ast\cdots\ast\triangle_{\beta_{q-1}}^{-2} \sim \eta_{\Phi(\Gamma_1,D^2_{\epsilon,1},\gamma^+_1\cup\cdots\cup\gamma^+_{g+1})} \simeq \eta_{\Phi(\Gamma_1,D^2_{\epsilon,1},\gamma^+_1)}\ast\cdots\ast\eta_{\Phi(\Gamma_1,D^2_{\epsilon,1},\gamma^+_{g+1})}$. Here we only have a stable isotopy because of our choice of the orientation of $\gamma_i$. To summarize the computations above, we get the following (stable) isotopies of contact structures:
\begin{align*}
\eta\ast\triangle^l
&\simeq\eta\ast(\triangle_{\alpha_1}^2\ast\cdots\ast\triangle_{\alpha_{p-1}}^2)\ast(\triangle_{\beta_1}^{-2}\ast\cdots\ast\triangle_{\beta_{q-1}}^{-2}) \\
&\simeq\eta\ast(\eta_{\Phi(\Gamma_0,D^2_{\epsilon,0},\gamma_1^-)}\ast\cdots\ast\eta_{\Phi(\Gamma_0,D^2_{\epsilon,0},\gamma_{g+1}^-)})\ast(\eta_{\Phi(\Gamma_1,D^2_{\epsilon,1},\gamma_1^+)}\ast\cdots\ast\eta_{\Phi(\Gamma_1,D^2_{\epsilon,1},\gamma_{g+1}^+)}) \\
&\simeq\eta\ast(\eta_{\Phi(\Gamma_0,D^2_{\epsilon,0},\gamma_1^-)}\ast\eta_{\Phi(\Gamma_1,D^2_{\epsilon,1},\gamma_1^+)})\ast\cdots\ast(\eta_{\Phi(\Gamma_0,D^2_{\epsilon,0},\gamma_{g+1}^-)}\ast\eta_{\Phi(\Gamma_1,D^2_{\epsilon,1},\gamma_{g+1}^+)})
\end{align*}
where the last step follows from the fact that isotopies that parallel transport $D^2_{\epsilon,0}$ and $D^2_{\epsilon,1}$ are disjoint.

Now it suffices to prove that $\eta_{\Phi(\Gamma_0,D^2_{\epsilon,0},\gamma_i^-)}\ast\eta_{\Phi(\Gamma_1,D^2_{\epsilon,1},\gamma_i^+)}$ is stably isotopic to an $I$-invariant contact structure for $1 \leq i\leq g+1$. To see this, take an annular neighborhood $A_i$ of $\gamma_i$ containing $D^2_{\epsilon,0}$ and $D^2_{\epsilon,1}$ and an admissible arc $\delta_i$ which intersects $\Gamma_0$, $\Gamma_1$, and $\gamma_i$ as depicted in Figure~\ref{Annulus}. We can assume that the isotopies $\Phi(\Gamma_0,D^2_{\epsilon,0},\gamma_i^-)$ and $\Phi(\Gamma_1,D^2_{\epsilon,1},\gamma_i^+)$ are supported in $A_i$. For simplicity of notation, we denote the composition $\eta_{\Phi(\Gamma_0,D^2_{\epsilon,0},\gamma_i^-)}\ast\eta_{\Phi(\Gamma_1,D^2_{\epsilon,1},\gamma_i^+)}$ by $\eta_{\gamma_i}$.

\begin{figure}[h]
    \begin{overpic}[scale=.28]{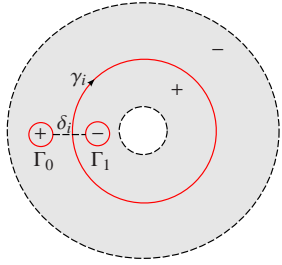}
    \put(10,34){\tiny{$\Gamma_0$}}
    \put(31,34){\tiny{$\Gamma_1$}}
    \put(23.5,65){\tiny{$\gamma_i$}}
    \put(18.43,47.5){\tiny{$\delta_i$}}
    \put(60,60){\tiny{$+$}}
    \put(75,75){\tiny{$-$}}
    \put(10,44){\tiny{$+$}}
    \put(31,44){\tiny{$-$}}
    \end{overpic}
    \caption{An annulus neighborhood $A_i$ of $\gamma_i$ containing $\Gamma_0$ and $\Gamma_1$.}
    \label{Annulus}
\end{figure}

By pushing down the bypass attachment $\sigma_{\delta_i}$ through $\eta_{\gamma_i}$, we have the following isotopies of contact structures:
\begin{align*}
\eta_{\gamma_i}\ast\triangle_{\delta_i}
&=\eta_{\gamma_i}\ast\sigma_{\delta_i}\ast\sigma_{\delta'_i}\ast\sigma_{\delta''_i} \\
&\simeq\sigma_{\tilde\delta_i}\ast\eta_{\Phi(\gamma_i)}\ast\sigma_{\delta'_i}\ast\sigma_{\delta''_i} \\
&\simeq\sigma_{\delta_i}\ast\sigma_{\delta'_i}\ast\sigma_{\delta''_i}=\triangle_{\delta_i}
\end{align*}
where $\tilde\delta_i$ is the push-down of $\delta_i$ which is isotopic to $\delta_i$, and the $\eta_{\Phi(\gamma_i)}$ is easily seen to be isotopic to an $I$-invariant contact structure. The argument works for all $i\in\{1,2,\cdots,g+1\}$, hence we establish the stable isotopy as desired.
\end{proof}

\noindent
{\em Acknowledgements}. The author is very grateful to Ko Honda for inspiring conversations throughout this work. The author also thank MSRI for providing an excellent environment for mathematical research during the academic year 2009-2010.

\end{document}